\newtheorem{thm}{Theorem}[section]
\newtheorem{lem}[thm]{Lemma}
\newtheorem{prop}[thm]{Proposition}
\newtheorem{cor}[thm]{Corollary}
\newtheorem{dfn}[thm]{Definition}
\newtheorem{fact}[thm]{Fact}
\newtheorem{rmk}[thm]{Remark}
\newcommand{\refT}[1]{Theorem~\ref{#1}}
\newcommand{\refL}[1]{Lemma~\ref{#1}}
\newcommand{\refR}[1]{Remark~\ref{#1}}
\newcommand{\refD}[1]{Definition~\ref{#1}}
\newcommand{\refS}[1]{Section~\ref{#1}}
\newcommand{\refP}[1]{Proposition~\ref{#1}}
\newcommand{\refF}[1]{Figure~\ref{#1}}
\newcommand{\refand}[2]{\ref{#1} and~\ref{#2}}
\newcommand\cA{\mathcal A}
\newcommand\cB{\mathcal B}
\newcommand\cC{\mathcal C}
\newcommand\cF{\mathcal F}
\newcommand\cG{\mathcal G}
\newcommand\cI{\mathcal I}
\newcommand\cS{{\mathcal S}}
\newcommand\cT{{\mathcal T}}
\newcommand\cV{\mathcal V}
\newcommand{\E}[1]{{\mathbb E}\left[#1\right]}
\newcommand{\p}[1]{{\mathbb P}\left(#1\right)}
\newcommand{\I}[1]{{\mathbf 1}_{[#1]}}
\newcommand{\N}{\mathbb{N}}
\newcommand{\Z}{\mathbb{Z}}
\newcommand{\eps}{\varepsilon}
\newcommand{\todist}{\stackrel{\mathrm{dist}}{\longrightarrow}}
\newcommand{\eqdist}{\stackrel{\mathrm{dist}}{=}}
\newcommand{\dTV}{\mathrm{d}_{\mathrm{TV}}}
\newcommand{\Ber}[1]{\mathrm{Bernoulli}\left(#1\right)}
\newcommand{\Geo}[1]{\mathrm{Geo}\left(#1\right)}
\newcommand{\Poi}[1]{\mathrm{Poi}\left(#1\right)}
\newcommand{\Unif}[1]{\mathrm{Unif}\left(#1\right)}
\newcommand{\floor}[1]{\lfloor #1 \rfloor}
\newcommand{\ceil}[1]{\lceil #1 \rceil}
\newcommand{\n}{{(n)}}
\newcommand{\RRT}{R}
\newcommand{\RH}{\mathrm{H}}
\newcommand{\HP}{\mathcal{P}}
\newcommand{\rh}{\mathrm{h}}
\newcommand{\rT}{T}
\newcommand{\bC}{\mathbf{C}}
\newcommand{\CF}{\mathcal{C}}
\newcommand{\RD}{\mathcal{D}}
\newcommand{\rd}{\mathrm{d}}
\title[A non-increasing tree growth process]{A non-increasing tree growth process for recursive trees and applications}
\date{June 22nd 2018}
\author{Laura Eslava}
\keywords{Tree growth processes, Kingman's coalescent, random recursive trees, coupling, Chen-Stein method, extreme values} 
\subjclass[2010]{60C05, 05C80.}
\newcommand{\Addresses}{{
  \bigskip
  \footnotesize

	\textsc{School of Mathematics, Georgia Institute of Technology,}\par\nopagebreak
	\textsc{686 Cherry Street NW, Atlanta, Georgia 30332-0160, USA} \par\nopagebreak
  \textit{E-mail address}: \texttt{laura.eslava@math.gatech.edu}\par\nopagebreak
  \textit{URL}: \texttt{http://www.people.math.gatech.edu/$\sim$leslava3/}
}}
\begin{document}

\begin{abstract}
We introduce a non-increasing tree growth process $((\rT_n,{\sigma}_n),\, n\ge 1)$, where $\rT_n$ is a rooted labeled tree on $n$ vertices and ${\sigma}_n$ is a permutation of the vertex labels. The construction of $(\rT_{n},{\sigma}_n)$ from $(\rT_{n-1},{\sigma}_{n-1})$  involves rewiring a random (possibly empty) subset of edges in $\rT_{n-1}$ towards the newly added vertex; as a consequence $\rT_{n-1} \not\subset \rT_n$ with positive probability. The key feature of the process is that the shape of $\rT_n$ has the same law as that of a random recursive tree, while the degree distribution of any given vertex is not monotonous in the process. 

We present two applications. First, while couplings between Kingman's coalescent and random recursive trees where known for any fixed $n$, this new process provides a non-standard coupling of all finite Kingman's coalescents. Second, we use the new process and the Chen-Stein method to extend the well-understood properties of degree distribution of random recursive trees to extremal-range cases. Namely, we obtain convergence rates on the number of vertices with degree at least $c\ln n$, $c\in (1,2)$, in trees with $n$ vertices. Further avenues of research are discussed. 
 \end{abstract}

\maketitle

\section{Introduction}

In a paper of 1970 \cite{NaRapoport70}, Na and Rapoport presented the problem of modeling how the structure of networks (as sociograms, communication and acquaintance networks) emerge through time. They considered two cases: \emph{ `growing'} trees and \emph{`static'} trees. 
The \emph{`growing'} model is now know as uniform attachment model and each instance is usually named (random) recursive tree. These are part of a broad class of tree growth models where vertices are sequentially added and connected to a random vertex in the current tree. On the other side, the term \emph{`static'} was motivated by the fact that this construction starts with the $n$ vertices the tree is aimed to have and $n-1$ edges are added one by one (without creating cycles). The \emph{`static'} model was an early description of what is now referred to as coalescent processes. 
The seemingly two distinct models of growth have been shown to be related for certain coalescent procedures (e.g. additive and Kingman's); that is, their resulting trees can also be constructed by a growth process \cite{ab14,LuczakWinkler04,Pitman99}. In particular, Kingman's coalescents correspond, for any fixed number of vertices $n$, to recursive trees; see \refR{rmk:K-R}.

Here we present a non-increasing tree growth process $((\rT_n,{\sigma}_n),\, n\ge 1)$ where $\rT_n$ is a rooted labeled tree on $n$ vertices and ${\sigma}_n$ is a permutation of the vertex labels. The three key features of this new growth process are:

\begin{enumerate}
\item The shape of $\rT_n$ has the same distribution as that of recursive trees (vertices are labeled uniformly at random),
\item adding edges according to the permutation $\sigma_n$ (in reverse order), recovers Kingman's coalescent,
\item there is a positive probability that $\rT_{n-1}\not\subset \rT_n$.
\end{enumerate}

Formally, we introduce the class $\RD_n$ of \emph{decorated trees} on $n$ vertices and a random mapping $\RH_n: \RD_{n-1}\to \RD_n$ such that $\RH_n(\rT_{n-1},{\sigma}_{n-1})\eqdist (\rT_n,{\sigma}_n)$ for all $n>1$. Our main result, \refT{thm:main}, states that recursively applying the mappings $\RH_n$ to the unique element in $\RD_1$ gives uniformly random decorated trees on $\RD_n$; from which the properties above are recovered. 
The fact that we can construct recursive trees in a non-increasing fashion is, to the best of our knowledge, a novel idea and it opens a wide range of further avenues of research. We discuss some of them in the last section.

We call Robin-Hood pruning to the random mapping $\RH_n$ that builds $(\rT_n,{\sigma}_n)$ from $(\rT_{n-1},{\sigma}_{n-1})$; it is the key conceptual contribution of this work and builds on the correspondence between recursive trees and Kingman's coalescent exploited in \cite{AddarioEslava15,Eslava16}. It seems that such connection had been rarely exploited, with exception to \cite{Devroye87,Pittel94}, where an equivalent construction was used by to study union-find trees and then related to recursive trees.
 
Additionally, we provide applications to high-degree vertices of recursive trees and their maximum degree. Kingman's coalescent had already been exploited by Addario-Berry and the author to describe near-maximum degrees in recursive trees, \cite{AddarioEslava15,Eslava16}. With the new procedure, we are able to extract finer information about extreme degree values in recursive trees. The main underlying technique is the Chen-Stein method for convergence rates to Poisson distributions. Informally, this method approximates the law of a sum $W$ of indicator variables, by understanding how the law of such indicator variables changes when conditioning on one of them being equal to one. In our case, the sum $W$ counts the number of vertices with high-degree. The perspective of the Robin-Hood pruning allow us to understand how the vertex-degree distributions change when we condition on one of such vertex-degrees being large.

Before we continue to precise statements of our results, we introduce basic notation that will be used throughout the paper as well as the standard construction of recursive trees. 

\subsection{Notation}

For $n\in \N$, we write $[n]=\{1,\ldots,n\}$ and $\cS_n$ for the set of permutations on $[n]$. We denote natural logarithms by $\ln(\cdot)$ and logarithms with base 2 by $\log(\cdot)$.

Given a rooted labeled tree $T=(V(T),E(T))$, write $|T|=|V(T)|$ and call $|T|$ the size of $T$. We write $\cT_n$ for the set of rooted trees $T$ with vertex set $V(T)=[n]$. 
By convention, we direct all edges toward the root $r(T)$ and write $e=uv$ for an edge with tail $u$ and head $v$. For $u\in V(T)\setminus \{r(T)\}$ we write $p_T(u)$ for the parent of $u$, that is, the unique vertex $v$ with $uv$ in $E(T)$. Finally, write $\rd_T(v)$ for the number of edges directed toward $v$ in $T$, and call $\rd_T(v)$ the degree of $v$. Note that $\rd_T(v) =\#\{u: p_T(u)=v\}$.

We say $T \in \cT_n$ is \emph{increasing} if its vertex labels increase along root-to-leaf paths; in other words, if $T \in \cT_n$ and $p_T(v) < v$ for all $v \in [n]\setminus\{r(T)\}$ (in particular, $r(T)=1$). We write $\cI_n \subset \cT_n$ for the set of increasing trees of size $n$. Using induction, it is easy to see that $|\cI_n|=(n-1)!$ for all $n$. Next, a tree growth process is a sequence $(T_n,\,  n\ge 1)$ of trees with $T_n\in \cT_n$ for each $n$. The process is increasing if $T_n\subset  T_{n+1}$ for all $n$; this implies that $T_n\in \cI_n$ for all $n$.

Recursive trees on $n$ vertices, which we denote $\RRT_n$, are usually constructed as follows. Start with $\RRT_1$ as a single node with label 1. For each $1<j\le n$, $\RRT_j$ is obtained from $\RRT_{j-1}$ by adding a new vertex $j$ and connecting it to $v_j\in [j-1]$; the choice of $v_j$ is uniformly random and independent for each $1<j\le n$. 
It is readily seen that $\RRT_n$ is a uniformly random tree in $\cI_n$. It follows that the process $(\RRT_n,\, n\ge 1)$ is a random increasing tree growth process. 
 
\subsection{The new growth process}\label{sec:main}

In what follows we extend the concept of increasing trees. If $T \in \cT_n$ and $\sigma \in \cS_n$ then $\sigma(T)$ is the tree $T' \in \cT_n$ with edges $\{\sigma(u)
\sigma(v): uv \in E(T)\}$. In words, $T'$ is obtained from $T$ by relabeling the vertices of $T$ according to the permutation $\sigma$; see \refF{transquickt} for an example. We say that $\sigma$ is an {\em stamp history} for $T$ if $\sigma(T)$ is
increasing. If $\sigma$ is an stamp history for $T$ then we say that the pair $(T,\sigma)$ is a \emph{recursively decorated tree} or \emph{decorated tree},
and that vertex $v$ has time stamp $\sigma(v)$. We denote the set of decorated trees of size $n$ by 
\[\RD_n=\{(T,\sigma):\, T\in \cT_n,\, \sigma \text{ is an stamp history of } T\}.\]

\begin{figure}
\begin{center}
\includegraphics[scale=1]{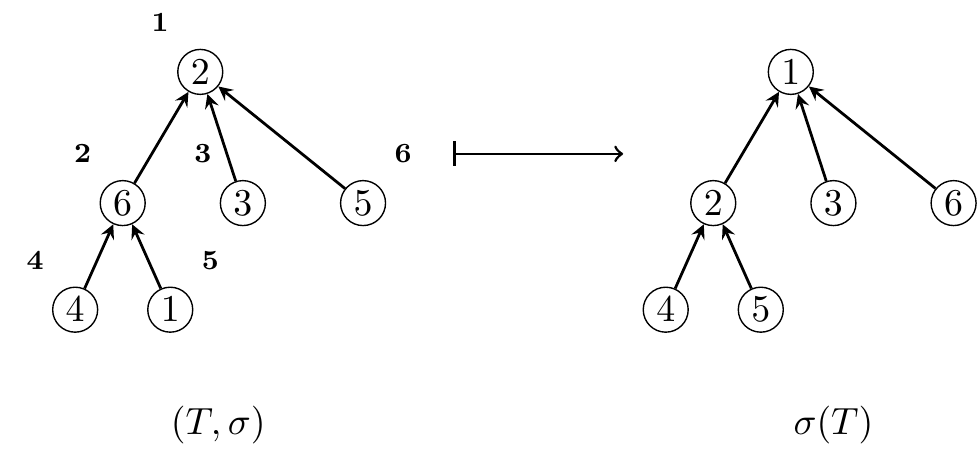}
\caption{A decorated tree $(T,\sigma)\in \RD_6$ on the left; the permutation $\sigma$ is depicted with bold numbers next to the vertices in $T$ (so for example $\sigma(1)=5$ and $\sigma(6)=2$). On the right, the increasing tree $\sigma(T)$.}\label{transquickt}
\end{center}
\end{figure}

For each $n\ge 2$, the Robin-Hood pruning $\RH_n:\RD_{n-1}\to \RD_n$ is a random mapping that can be applied to any decorated tree. The exact definition of $\RH_n$ will be given in \refS{sec:pruning}. Broadly speaking, $\RH_n(T,\sigma)$ is obtained from $(T,\sigma)$ by pruning some subtrees of $T$ and placing them as subtrees of a new vertex labeled $n$; additionally, vertex $n$ attaches to a random vertex or becomes the root of the new tree. The stamp history in $\RH_n(T,\sigma)$ is adjusted from $\sigma$ such that vertex $n$ has a uniformly random \emph{time stamp}. Heuristically, the random procedure follows a `steal from the old to give to the new' scheme; that is, once the time stamp of $n$ has been determined, vertices with an earlier time stamp have larger probability of being reattached to vertex $n$. 

The content of our main theorem says that, when the input of $\RH_n$ is uniformly random in $\RD_{n-1}$ the output is uniformly random in $\RD_n$. For the remainder of the paper, for any $n\ge 1$, the pair $(\rT_n,{\sigma}_n)$ denotes a uniformly random element in $\RD_n$. Such result boils down to carefully setting up the distribution of the random parameters involved in the Robin-Hood pruning.   

\begin{thm}\label{thm:main}
For each $n\ge 2$, the Robin-Hood pruning provides a coupling between $(\rT_{n-1},{\sigma}_{n-1})$ and $(\rT_n,{\sigma}_n)$ such that $(\rT_n,{\sigma}_n)=\RH_n(\rT_{n-1},{\sigma}_{n-1})$.
\end{thm}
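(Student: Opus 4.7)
The goal is to show that $\RH_n$ transports the uniform measure on $\RD_{n-1}$ to the uniform measure on $\RD_n$. As a first step I would compute $|\RD_n|$: the map $(T,\sigma)\mapsto(\sigma(T),\sigma)$ is a bijection between $\RD_n$ and $\cI_n\times\cS_n$, since $\sigma(T)\in\cI_n$ by the stamp-history property and $T=\sigma^{-1}(\sigma(T))$ is recoverable from the image. Hence $|\RD_n|=(n-1)!\,n!$, and the theorem reduces to showing that for every fixed target $(T,\sigma)\in\RD_n$,
\[\p{\RH_n(\rT_{n-1},\sigma_{n-1})=(T,\sigma)}=\frac{1}{(n-1)!\,n!}.\]

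Conditioning on the input, this probability equals $\frac{1}{|\RD_{n-1}|}\sum_{(T',\sigma')\in\RD_{n-1}}\p{\RH_n(T',\sigma')=(T,\sigma)}$, and the task is to evaluate the inner sum. I would parametrize the preimages of $(T,\sigma)$ under $\RH_n$ by reverse-engineering the construction: the time stamp of $n$ must be $k=\sigma(n)$, which forces $\sigma'$ to be the restriction of $\sigma$ to $V(T)\setminus\{n\}$ with a shift so that $\sigma'\in\cS_{n-1}$; the children of $n$ in $T$ either were already attached to $n$'s designated parent in $T'$ (they become children of $n$ only because $n$ was inserted above them) or they were pruned from some earlier parent $w(u)\in V(T')$ and rewired as children of $n$. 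A preimage is therefore specified by a subset $S$ of the children of $n$ in $T$ (the set of rewired subtrees) together with, for each $u\in S$, a legal choice of $w(u)$ compatible with the stamp history of $T'$.

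The main obstacle is the resulting combinatorial identity: the probability of each preimage is a product of the Robin-Hood weights for the rewiring decisions (biased so that vertices with earlier time stamps are more likely to be rewired), times $1/n$ for the uniform time-stamp choice, times the probability that $n$ is attached to its prescribed parent in $T$. One must show that summing these products over all valid $(S,w)$ yields the constant $|\RD_{n-1}|/|\RD_n|=1/(n(n-1))$, independent of $(T,\sigma)$. I expect this is precisely what the Robin-Hood weights are designed to achieve. A clean approach is to induct on the out-degree of $n$ in $T$, peeling off one rewired child at a time and using an inductive identity for the partial sums of Robin-Hood weights; an alternative is to verify uniformity one marginal at a time — first the time stamp of $n$, then conditionally the set of children of $n$, then the shapes of their subtrees, and finally the rest of $T$ — exploiting the Markov-like structure of $\RH_n$ to break the identity into manageable pieces.
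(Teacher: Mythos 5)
Your overall strategy---reduce to point probabilities, i.e.\ show that $\p{\RH_n(\rT_{n-1},\sigma_{n-1})=(T,\sigma)}=1/(n!(n-1)!)$ for every fixed $(T,\sigma)\in\RD_n$ by summing Robin-Hood weights over preimages---is legitimate and genuinely different from the paper's route, which never enumerates preimages: it first characterizes the uniform law on $\RD_n$ by three properties (uniformity of the stamp history, conditional independence of the parents given the stamp history, and the marginal identity \eqref{uniform} of \refL{lem:charac}) and then verifies these for the output of the pruning by a short case analysis. As written, however, your argument has a genuine gap, in two respects. First, your description of the preimages does not match the map $\rh_n$ defined in \refS{sec:pruning}: in the output, the children of $n$ are \emph{exactly} the rewired set $\cV_n(k,x,\sigma)$ (together with the former root when $k=\sigma(n)=1$); vertex $n$ is attached \emph{below} $\sigma^{-1}(l)$ and inherits none of that vertex's children, so no child of $n$ can arise because it ``was already attached to $n$'s designated parent'' with $n$ ``inserted above'' it. Consequently the subset $S$ in your parametrization is forced to be the full set of children of $n$ (except that, when $\sigma(n)=1$, the child with time stamp $2$ is the former root and requires no choice of former parent); summing over proper subsets $S$ would include inputs that do not actually map to $(T,\sigma)$ and the count would come out wrong. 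Second, the identity you call ``the main obstacle'' is precisely the content of the theorem, and you leave it unverified, so the proposal is a plan rather than a proof.

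For the record, with the corrected parametrization the computation does close, and without induction. Write $k=\sigma(n)$ and suppose $k>1$. The input permutation is forced, the parents of all non-children of $n$ are forced, and preimages are indexed by a choice of former parent for each child $u$ of $n$, admissible exactly when the former parent's input time stamp is smaller than $\sigma(u)-1$, giving $\sigma(u)-2$ choices. Each time stamp $j\in\{k+1,\dots,n\}$ then contributes the factor $\frac{j-2}{j-1}$ to the weighted preimage count \emph{regardless} of whether $\sigma^{-1}(j)$ is a child of $n$: either $X_{j-1}=0$ with probability $\frac{j-2}{j-1}$ and the old parent is kept, or $X_{j-1}=1$ with probability $\frac{1}{j-1}$ and there are $j-2$ admissible former parents, each yielding a distinct preimage of uniform probability $\frac{1}{(n-1)!(n-2)!}$. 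The product telescopes to $\frac{k-1}{n-1}$, and multiplying by $\p{K=k,\, L=\sigma(p_T(n))}=\frac{1}{n(k-1)}$ gives $\frac{1}{n!(n-1)!}$, independently of $(T,\sigma)$; the case $k=1$ is analogous ($X_1=1$ almost surely, the stamp-$2$ child is the former root, and the product over $j\ge 3$ gives $\frac{1}{n-1}$). This is exactly the identity your sketch defers to the design of the Robin-Hood weights; until it (or the equivalent marginal verification the paper performs via \refL{lem:charac}) is carried out, the theorem is not proved.
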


Note that $|\RD_1|=1$, thus the Robin-Hood pruning can be unambiguously applied to decorated trees starting from $\RD_1$. \refT{thm:main} implies that the tree growth process $((\rT_n,{\sigma}_n),\, n\ge 1)$ given by $(\rT_n,{\sigma}_n)=\RH_n((\rT_{n-1},{\sigma}_{n-1}))$ is composed of uniformly random decorated trees, but it yields a non-increasing growth process on trees. This occurs since the rewiring may destroy some subtrees in the previous tree; see \refR{rmk:non-increase}. However, the shape of $\rT_n$ has the same law as that of $\RRT_n$; this is proven by a straightforward bijection between $\RD_n$ and $\cI_n\times \cS_n$. 

\begin{prop}\label{prop:RD-R}
For each $n\in \N$, $|\RD_n|=n!(n-1)!$ and if $(\rT_n,{\sigma}_n)\in \RD_n$ is chosen uniformly at random then ${\sigma}_n(\rT_n)\eqdist \RRT_n$ is a recursive tree of size $n$ and ${\sigma}_n$ is a uniformly chosen permutation in $\cS_n$.    
\end{prop}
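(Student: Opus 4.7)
The plan is to exhibit an explicit bijection $\Phi: \RD_n \to \cI_n \times \cS_n$ and then use it to compute the cardinality and push forward the uniform measure.

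The natural candidate map is
\[
\Phi(T,\sigma) = (\sigma(T),\sigma).
\]
First I would check that $\Phi$ lands in $\cI_n \times \cS_n$: by definition of a stamp history, $\sigma(T)$ is increasing, so $\sigma(T) \in \cI_n$, and obviously $\sigma \in \cS_n$. For the inverse, given $(T',\sigma) \in \cI_n \times \cS_n$, set
\[
\Psi(T',\sigma) = (\sigma^{-1}(T'),\sigma).
\]
I would verify that $\Psi$ lands in $\RD_n$: since $\sigma(\sigma^{-1}(T'))=T'$ is increasing, $\sigma$ is indeed a stamp history of $\sigma^{-1}(T')$. The identities $\Phi \circ \Psi = \mathrm{id}$ and $\Psi \circ \Phi = \mathrm{id}$ are immediate from the relabeling convention $\sigma(T) = \{\sigma(u)\sigma(v): uv \in E(T)\}$.

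Having established the bijection, the counting statement follows at once:
\[
|\RD_n| = |\cI_n| \cdot |\cS_n| = (n-1)! \cdot n!,
\]
using the fact recalled in the introduction that $|\cI_n|=(n-1)!$. For the distributional statement, if $(\rT_n,\sigma_n)$ is uniform on $\RD_n$, then $\Phi(\rT_n,\sigma_n) = (\sigma_n(\rT_n),\sigma_n)$ is uniform on the product $\cI_n \times \cS_n$. The marginals are therefore independent and uniform: $\sigma_n$ is uniform on $\cS_n$, and $\sigma_n(\rT_n)$ is uniform on $\cI_n$. Since $\RRT_n$ is uniform on $\cI_n$ (as noted in the introduction via the standard recursive construction), we conclude $\sigma_n(\rT_n) \eqdist \RRT_n$.

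There is really no hard step here; the only point that requires care is keeping track of the direction of the relabeling when defining the inverse $\Psi$, so that the stamp history property is preserved. Once the bijection is set up cleanly, both the counting and the distributional conclusion are just transport of structure.
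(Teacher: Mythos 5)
Your proposal is correct and matches the paper's own argument: the paper uses exactly the same map $(T,\sigma)\mapsto(\sigma(T),\sigma)$ as a bijection onto $\cI_n\times\cS_n$, deduces $|\RD_n|=n!(n-1)!$, and transports the uniform measure. Your write-up simply makes the inverse map and the well-definedness check slightly more explicit.
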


\begin{proof}
By definition, if $(T,\sigma)\in \RD_n$, then $\sigma(T)\in \cI_n$. Let $\varphi:\RD_n\to \cI_n\times \cS_n$ be defined such that $\varphi(T,\sigma)=(\sigma(T), \sigma)$. For an increasing tree $T$ and $\sigma\in \cS_n$, let $T'=\sigma^{-1}(T)$ then $\varphi(T',\sigma)=(T,\sigma)$, 
it is also straightforward that $\varphi$ is injective. Therefore, $|\RD_n|=|\cI_n|\cdot |\cS_n|=n!(n-1)!$.
The result follows since bijections  preserve the uniform measure on finite probability spaces. 
\end{proof}

Growth procedures naturally couple families of trees as the size varies. For example, \refP{prop:K-DT} below shows that $(\rT_n,\sigma_n)$ is a representation of Kingman's coalescent on $[n]$; informally, the stamp history encodes the addition of edges in the coalescent. Precise definitions are given in \refS{sec:K}, for the moment it suffices to say that $\bC=(F_n,\ldots, F_1)$ denotes a Kingman's coalescent, where the $F_j$ are forests. 

\begin{prop}\label{prop:K-DT}
Let $(\rT_n,{\sigma}_n)$ be uniformly random in $\RD_n$ and $\bC=(F_n,\ldots, F_1)$ be a Kingman's coalescent. Denote $F_1=\{T_{\bC}\}$, then $T_{\bC}\eqdist \rT_n$ and the forests evolution is given by $\sigma_n$.  
\end{prop}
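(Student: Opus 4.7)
The plan is to build the forests $(F_n,\ldots,F_1)$ directly from $(\rT_n,\sigma_n)$ and identify the resulting law with that of Kingman's coalescent. For $j\in\{1,\ldots,n\}$ I would set $F_j = ([n], E_j)$ with $E_j = \{v\,p_{\rT_n}(v):\sigma_n(v)>j\}$, that is, the spanning subgraph of $\rT_n$ retaining those edges whose child endpoint has stamp strictly greater than $j$. Since $|E_j| = n - j$, the subgraph $F_j$ is automatically a forest with exactly $j$ components; moreover, because $\sigma_n$ is a stamp history, following parent-edges upward from any vertex while the current vertex has stamp greater than $j$ terminates at a unique \emph{anchor} with stamp at most $j$, so the $j$ components of $F_j$ are canonically indexed by the anchors $\sigma_n^{-1}(1),\ldots,\sigma_n^{-1}(j)$. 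In particular $F_n=([n],\emptyset)$ and $F_1=\{\rT_n\}$, and each transition $F_j\mapsto F_{j-1}$ adjoins the single edge $\sigma_n^{-1}(j)\,p_{\rT_n}(\sigma_n^{-1}(j))$, which demonstrates that $\sigma_n$ drives the entire forest evolution.

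To identify the distribution, I would apply Proposition~\ref{prop:RD-R} to rewrite $(\rT_n,\sigma_n) = (\sigma_n^{-1}(R_n),\sigma_n)$ with $R_n$ an independent uniform recursive tree on the stamp set $[n]$. The edge added at step $j\mapsto j-1$ then reads $\sigma_n^{-1}(j)\,\sigma_n^{-1}(p_{R_n}(j))$, and the recursive-tree construction supplies independent parents $p_{R_n}(j)$ uniform on $[j-1]$. An exchangeability argument, leveraging the independent uniform permutation $\sigma_n$, shows that conditional on $F_j$ the merging pair $\{\sigma_n^{-1}(j),\sigma_n^{-1}(p_{R_n}(j))\}$ is uniformly distributed over the $\binom{j}{2}$ pairs of components of $F_j$, and moreover the specific edge endpoint within the absorbing component matches the convention adopted for Kingman's coalescent in Section~\ref{sec:K}. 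Hence $(F_n,\ldots,F_1)$ has the law of Kingman's coalescent, and $T_{\bC}\eqdist F_1 = \rT_n$ follows at once.

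The main obstacle I anticipate is the exchangeability step: the component sizes of $F_j$, in stamp coordinates, depend on $R_n$ in a nontrivial way, so uniformity of which anchor gets merged next is recovered only after averaging over $R_n$. The independence of $R_n$ and $\sigma_n$ provided by Proposition~\ref{prop:RD-R} is precisely what makes this averaging go through, and verifying that the induced edge-endpoint distribution matches the specific definition of Kingman's coalescent in Section~\ref{sec:K} is where I expect the most care will be needed.
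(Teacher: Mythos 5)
Your first paragraph reconstructs exactly the inverse of the bijection $\Upsilon$ of \refP{prop:RD-CF} (the edge created at step $j\mapsto j-1$ is the parent edge of the stamp-$j$ vertex, and the roots of $F_j$ are the $j$ vertices with stamps at most $j$), so that part is sound; where you genuinely diverge from the paper is in how the law is identified. The paper argues in the opposite direction: it computes that Kingman's coalescent is \emph{uniform} on the set $\CF_n$ of $n$-chains — at each step exactly one of the $k(k+1)$ oriented root pairs produces the next forest, so every chain has probability $[n!(n-1)!]^{-1}$ — and then transfers uniformity through $\Upsilon$, which is a one-line counting argument. You instead verify Kingman's transition rule for the chain induced by the uniform decorated tree, which is the more delicate direction, and your crux step needs two repairs to be complete. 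First, you must condition on the entire history $(F_n,\ldots,F_j)$, not merely on $F_j$, to identify the law of the whole sequence. Second, the exchangeability claim does hold, and here is the observation that makes it work: conditioning on the history amounts to conditioning on the event that $\sigma_n^{-1}(i)=v_i$ and $p_{\RRT_n}(i)=\sigma_n(w_i)$ for all $i>j$, where $v_i$ and $w_i=p_{\rT_n}(v_i)$ are read off from the history; since the parents $p_{\RRT_n}(i)$ are independent and uniform on $[i-1]$, and $\sigma_n(w_i)<i$ always, each of these parent conditions has conditional probability $(i-1)^{-1}$ regardless of how $\sigma_n$ assigns the stamps $1,\ldots,j$ to the $j$ roots of $F_j$. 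Hence, given the history, the restriction of $\sigma_n$ to the roots remains a uniform bijection onto $[j]$, while $p_{\RRT_n}(j)$ remains uniform on $[j-1]$ and independent of it, so the ordered pair consisting of the stamp-$j$ root and the root it attaches to is uniform over the $j(j-1)$ ordered pairs of roots — exactly the uniform pair plus fair coin of \refD{dfn:Kingman}. With these points filled in your route is correct; it buys an explicit Markovian description of the forest evolution of a uniform decorated tree, at the price of a conditional argument that is noticeably heavier than the paper's counting-plus-bijection proof.
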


Typically there is no simple coupling of finite $n$-coalescent processes as $n$ varies. The first application of \refT{thm:main} is that the Robin-Hood pruning produces, given a Kingman's coalescent on $n$ vertices, a Kingman's coalescent on $n+1$ vertices. 

\begin{cor}\label{cor:main}
The tree growth process $((\rT_n,{\sigma}_n),\, n\ge 1)$, coupled as in \refT{thm:main} gives an explicit coupling of all finite Kingman's coalescents. 
\end{cor}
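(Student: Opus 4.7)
The plan is to assemble the corollary directly from the two main results already stated. The only element of Kingman's coalescent we need is the content of \refP{prop:K-DT}: a uniformly random decorated tree in $\RD_n$ is a valid encoding of a Kingman's coalescent on $[n]$, with $\sigma_n$ recording the order in which edges are added (in reverse).

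First I would fix a probability space on which the process $((\rT_n,\sigma_n), n\ge 1)$ is defined, starting from the unique element of $\RD_1$ and applying the Robin-Hood pruning $\RH_n$ sequentially. By \refT{thm:main}, an inductive argument shows that for every $n\ge 1$ the marginal $(\rT_n,\sigma_n)$ is uniformly distributed on $\RD_n$: the base case $n=1$ is immediate since $|\RD_1|=1$, and the inductive step is exactly the statement of \refT{thm:main}, namely that $\RH_n$ applied to a uniform element of $\RD_{n-1}$ yields a uniform element of $\RD_n$.

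Next, for each $n\ge 1$, I would invoke \refP{prop:K-DT} on the pair $(\rT_n,\sigma_n)$ produced above. Since $(\rT_n,\sigma_n)$ is uniform on $\RD_n$, the proposition says that reading off the edges of $\rT_n$ in the reverse order dictated by $\sigma_n$ yields a Kingman's coalescent $\bC^{(n)}=(F_n^{(n)},\ldots,F_1^{(n)})$ on $[n]$ with terminal tree $T_{\bC^{(n)}}=\rT_n$. Because all of the pairs $(\rT_n,\sigma_n)$ are built on the same probability space through the recursive application of the pruning, the family $(\bC^{(n)}, n\ge 1)$ is jointly defined: this is precisely the explicit coupling of all finite Kingman's coalescents that the corollary asserts.

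There is no genuine obstacle, as \refT{thm:main} and \refP{prop:K-DT} do all the work; the only point requiring care is the inductive verification that the marginal laws along the process remain uniform on $\RD_n$. I would emphasize in the write-up that the coupling is non-standard in the sense discussed after \refT{thm:main}: since $\rT_{n-1}\not\subset \rT_n$ in general, the induced coupling of $\bC^{(n-1)}$ and $\bC^{(n)}$ is not the obvious one obtained by removing vertex $n$ from the larger coalescent, but rather is mediated by the rewiring performed by $\RH_n$.
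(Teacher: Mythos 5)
Your proposal is correct and matches the paper's intent: the corollary is left as an immediate consequence of \refT{thm:main} (all the pairs $(\rT_n,\sigma_n)$ are uniform and live on one probability space via iterated Robin-Hood prunings) together with Propositions~\refand{prop:RD-CF}{prop:K-DT} (the bijection $\Upsilon^{-1}$ turns each uniform decorated tree into a Kingman's $n$-coalescent), which is exactly the assembly you describe. Your closing remark that the induced coupling of coalescents is non-standard, since $\rT_{n-1}\not\subset\rT_n$ with positive probability, is also in line with the discussion surrounding \refR{rmk:non-increase}.
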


The proof of \refP{prop:K-DT}  is given in \refS{sec:K} and is based on previous connections between recursive trees and Kingman's coalescents; see \refR{rmk:K-R}.

\subsection{High-degree vertices in $\RRT_n$}\label{sec:Deg}

In this section we establish a phase change on the number of \emph{high-degree} vertices in recursive trees. Phase changes occurs on random structures when a class of variables undergo a transition from asymptotic normal limits to asymptotic Poisson limits. The change is marked by the mean of the variables going from infinite to bounded. In recursive trees, for example, the number of fringe trees of a given size undergoes a phase change when the size $k$ of the trees tend to infinite and $k=o(\sqrt{n})$ no longer holds \cite{Fuchs08}; similar results are given when the fringe trees are required to satisfy any given \emph{property} or \emph{pattern} \cite{ChangFuchs10,HolmgrenJanson15}. 
For an integer $0<m\le n$, let us count the number of high-degree vertices by  
\begin{align*}
Z_{m}^\n=\#\{v\in [n]: \rd_{\RRT_n}(v)\ge m \}
\end{align*}
and write $\lambda_{n,m}=\E{Z_m^\n}$. The following estimates where implicitly given in \cite{AddarioEslava15} and a proof can be found in Appendix A, \refP{upper m}. For each $c\in (0,2)$, there is $\gamma=\gamma(c)$ such that uniformly over $m<c\ln n$,  
\begin{align}\label{eq:gamma}
2^{-m+\log n}(1-o(n^{-\gamma})) =\E{Z_m^\n} \le 2^{-m+\log n}.
\end{align}
It thus follows that the phase change occurs when $m=m(n)\approx \log n$. Using a Poisson approximation together with \eqref{eq:gamma} we obtain the following phase change for the counts on high-degree vertices. 

\begin{thm}\label{thm:normal}
For each $c\in (1,\log e)$ there exists $c'\in (1,c)$ such that if $c'\ln n<m< c\ln n$, then 
\begin{align}\label{eq:normal}
\frac{Z_m^\n-\lambda_{n,m}}{\sqrt{\lambda_{n,m}}}\todist N(0,1).
\end{align}
If $m\ge \log n$, under sequences $n_j$ for which $\lambda_{n_j,m}\to \lambda$ we have that  
\begin{align*}
Z_m^{(n_j)} \todist Poi(\lambda).
\end{align*}
\end{thm}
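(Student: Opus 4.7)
The plan is to establish the Poisson approximation
\[ \dTV(Z_m^\n, \Poi(\lambda_{n,m})) = o(1) \]
by the Chen--Stein method, and deduce both conclusions from it. Write $Z_m^\n = \sum_{v=1}^n I_v$ with $I_v = \I{\rd_{\RRT_n}(v) \ge m}$, so by exchangeability $\lambda_{n,m} = n\,\p{I_1=1}$.

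\textbf{Step 1 (Chen--Stein setup).} I would invoke the size-bias form of the Chen--Stein inequality: if $W^*$ has the size-biased law of $Z_m^\n$, then for any coupling of $W^*$ and $Z_m^\n$,
\[ \dTV(Z_m^\n, \Poi(\lambda_{n,m})) \le (1 \wedge \lambda_{n,m}^{-1})\, \lambda_{n,m}\, \E{|W^* - Z_m^\n - 1|}. \]
The size-biased $W^*$ is realized by picking $V$ uniformly in $[n]$ and sampling the tree from its law conditional on $I_V = 1$.

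\textbf{Step 2 (Coupling via Robin--Hood pruning).} This is the heart of the proof and is where the new process plays its role. Working in the decorated-tree picture justified by \refP{prop:RD-R}, I would construct an explicit coupling of $(\rT_n,{\sigma}_n)$ and a second decorated tree $(\widetilde\rT_n,\widetilde{\sigma}_n)$ whose law is that of $(\rT_n,{\sigma}_n)$ conditioned on $\rd(V) \ge m$, by running the two processes $\RH_2,\dots,\RH_n$ in parallel from the same initial decorated tree and coupling the random rewirings at each step as closely as possible. The ``steal from old, give to new'' mechanism of $\RH_n$ describes precisely how conditioning biases each step (more subtrees get rewired onto $V$); I would arrange the coupling so that the two trees agree on $[n]$ outside a small random set $S$ around $V$, and hence $|W^* - Z_m^\n - 1|$ is controlled by a function of $|S|$ with small expectation.

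\textbf{Step 3 (Deducing both limits).} With $\lambda_{n,m} \sim n\cdot 2^{-m}$ from \eqref{eq:gamma}, the heuristic estimate $\E{|S|} = O(m\, p_m) = O((\ln n)\,\lambda_{n,m}/n)$ would give a Chen--Stein error that is a vanishing polynomial in $n$ throughout the window $c'\ln n < m < c\ln n$; the threshold $c'$ is calibrated so that the subleading corrections coming from higher-order dependence are also absorbed. Once $\dTV \to 0$ is in hand, the Poisson statement is immediate, since $\Poi(\lambda_{n_j,m})\todist\Poi(\lambda)$ combined with the triangle inequality for total variation yields $Z_m^{(n_j)}\todist\Poi(\lambda)$. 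For the CLT, total variation is invariant under the affine map $x\mapsto (x-\lambda_{n,m})/\sqrt{\lambda_{n,m}}$, and $(\Poi(\lambda_{n,m})-\lambda_{n,m})/\sqrt{\lambda_{n,m}} \todist N(0,1)$ as $\lambda_{n,m}\to\infty$ by the classical Poisson CLT, so \eqref{eq:normal} follows.

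\textbf{Main obstacle.} The crux is Step~2: translating the inductive description of $\RH_n$ into a genuinely usable coupling with the conditional law. Conditioning on $\{\rd(V)\ge m\}$ perturbs the pruning probabilities at every one of the $n$ generations, and one must verify that the cumulative effect is to perturb only $O(\ln n)$ vertex-degrees away from their unconditional values. Pushing the threshold down to $c'>1$ (rather than some larger value) is what forces the use of the explicit conditional distributions supplied by the pruning, rather than a crude Cauchy--Schwarz bound on the joint tails $\p{I_u = I_v = 1}$; this is exactly why the new tree-growth process is the right tool.
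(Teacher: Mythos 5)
Your overall strategy---a Chen--Stein bound $\dTV(Z_m^\n,\Poi{\lambda_{n,m}})=o(1)$ obtained through a pruning-based coupling, followed by the Poisson central limit theorem for \eqref{eq:normal} and the triangle inequality for the Poisson regime---is the route the paper takes (the theorem is deduced from the total-variation bound of \refT{thm:dTV}), and your Step 3 deduction is fine. The gap is in Step 2 and in the calibration of $c'$. The coupling you describe (running $\RH_2,\dots,\RH_n$ in parallel and tracking a conditioning that ``perturbs every one of the $n$ generations'') is not how the pruning is used, and you have missed the device that makes the conditioning tractable: after passing to the exchangeable tree $\rT_n$, exchangeability (\refC{cor:ChenStein}) lets one couple with the law conditioned on $I_n=1$, i.e.\ on the degree of the \emph{last-added} vertex, and by Fact~\ref{fact:pruning} that degree equals $\sum_{i=K}^{n-1}X_i$, a function of the final $\RH_n$-set alone, independent of $(\rT_{n-1},\sigma_{n-1})$. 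Hence only the last pruning step must be re-conditioned, and the coupling is assembled from monotone (Harris/Strassen) couplings of $(K,L,X)$ with its conditioned version, yielding $\p{I_v<J_v}=O(n^{-1-\beta})$ (\refP{prop:coup-neg}). Your version, which conditions on $\rd(V)\ge m$ for a uniform $V$, acknowledges that every step is affected but offers no mechanism to control the cumulative effect; as stated it would not go through.

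Moreover, your error heuristic $\E{|S|}=O((\ln n)\,\lambda_{n,m}/n)$ is too optimistic and hides the ingredient that actually determines $c'$. The one-sided coupling only controls the event $\{I_v<J_v\}$; the remaining contribution $\E{I_nI_v}-\E{I_n}\E{I_v}$ requires the ``almost negative correlation'' estimate of \refP{prop:asymp-neg} (proved via Kingman's coalescent in Appendix A), which adds a term $O(2^{-m+(1-\alpha)\log n})=\lambda_{n,m}\,O(n^{-\alpha})$ to the total-variation bound. This term vanishes only when $m$ exceeds $(1-\alpha)\log n$, which is exactly why the paper must check, using $\alpha<\frac14\bigl(1-c+\sqrt{1+2c-c^2}\bigr)$, that $(1-\alpha)\log e<c$ for $c\in(1,\log e)$, and then choose $c'\in((1-\alpha)\log e,c)$. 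Under your claimed bound there would be essentially no constraint on $c'$ at all, so your sketch gives no reason why the asserted $c'\in(1,c)$ exists and no role to the hypothesis $c<\log e$; this is a substantive gap, not bookkeeping.
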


\begin{rmk}
Counting the high-degree vertices is equivalent to count fringe trees (of all sizes) with a high-degree root. Therefore, the asymptotic normal distribution of $Z_m^\n$, \eqref{eq:normal}, follows from \cite[Corollary 1.25]{HolmgrenJanson15}; however, the computation of both the mean and variance for the renormalization of the variables is not seemingly straightforward. Nevertheless, we remark that the associated convergence rates in \refT{thm:dTV} are strong and novel.
\end{rmk}

Previous results on the profile of recursive trees consider $X_{m}^\n=\#\{v\in [n]: \rd_{\RRT_n}(v)= m \}
$, for $m< n$. For finite values of $m$, Janson established the joint limiting distribution of $(X_{m}^\n,\, m\ge 1)$ in \cite{Janson05}. Addario-Berry and the author addressed the case $m=m(n)\to \infty$, providing all the possible limiting distributions of $(X_{\floor{\log n}+k}^\n,\, k\in \Z)$ and establishing asymptotic normality for $X_m^\n$ when $m = \log  n- d$ and $d = d(n)$ slowly tends to infinity \cite{AddarioEslava15}.

\refT{thm:normal} follows from the convergence rates of the next theorem, which in turn, applies the Chen-Stein method to $Z_m^\n$. By changing the perspective of recursive trees to the distribution equivalent $\rT_n$ we can use the Robin-Hood pruning to understand how the variables $(\I_{\rd_{\rT_n}(v)\ge m},\, v\in [n])$ change when conditioning to $\rd_{\rT_n}(v)\ge m$. The details of this approach are somewhat delicate, so we defer the discussion to \refS{sec:overview}.

\begin{thm}\label{thm:dTV}
Fix $1<c'<c<2$. There are constants $\alpha=\alpha(c)\in (0,1)$ and $\beta=\beta(c')>0$ such that uniformly for $m=m(n)$ satisfying $c'\ln n< m<c\ln n$,
\begin{align*}
\dTV \left(Z_{ m}^\n, \Poi{\lambda_{n,m}}\right) &\le O(2^{-m+(1-\alpha)\log n})+O(n^{-\beta}).
\end{align*}
\end{thm}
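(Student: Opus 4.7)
The plan is to apply the Chen--Stein method for Poisson approximation to the sum $W = Z_m^{(n)} = \sum_{v \in [n]} I_v$, where $I_v = \I{\rd_{\rT_n}(v) \ge m}$, via a size-biased coupling built from the Robin--Hood pruning. For each $v \in [n]$ I would construct on a common probability space two decorated trees $(\rT_n,{\sigma}_n)$ and $(\rT_n^{(v)},{\sigma}_n^{(v)})$, the second having the conditional law of the first given $\rd_{\rT_n}(v)\ge m$. Writing $p_v=\p{I_v=1}$ and $W^{(v)}=\#\{u\ne v : \rd_{\rT_n^{(v)}}(u)\ge m\}$, a standard size-biased form of the Chen--Stein inequality yields
\[
\dTV\bigl(W,\Poi{\lambda_{n,m}}\bigr)\le \min(1,\lambda_{n,m}^{-1})\sum_{v\in[n]} p_v\,\E{\bigl|W - 1 - W^{(v)}\bigr|},
\]
so the task reduces to producing a coupling in which $|W-1-W^{(v)}|$ is small.

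First I would transfer to the decorated-tree picture via \refP{prop:RD-R}, so that $\rT_n$ is sampled inductively via the mappings $\RH_k$. Second, I would run $((\rT_k,\sigma_k))_{k=1}^n$ and $((\rT_k^{(v)},\sigma_k^{(v)}))_{k=1}^n$ jointly, using the same Robin--Hood randomness whenever possible and correcting only at the steps where the conditioning on $\rd_{\rT_n}(v)\ge m$ forces a different choice. Since the pruning's `steal from the old' mechanism assigns each potential child of $v$ a probability depending on its time stamp and on ${\sigma}_k(v)$, the conditioning amounts to forcing a specific random set of $m$ reattachments to $v$; the coupling then differs from the unconditioned process only in the identity of these reattached subtrees. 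Each such rewiring changes the degree of exactly two vertices, so $|W-1-W^{(v)}|$ is dominated by the number of vertices near the threshold $m$ that are involved in the chain of reattachments near $v$.

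The two error terms come from the following quantitative estimates. On a typical event, all forced reattachments involve only vertices whose stamps lie in an initial window $[1,n^{1-\alpha}]$ for some $\alpha=\alpha(c)\in(0,1)$; this reflects that to attain degree $\ge m \ge c'\ln n$ with $c'>1$, vertex $v$ must be `old' in the stamp order, and the subtrees it steals come predominantly from the same early window. A careful accounting shows that at most $O(n^{1-\alpha})$ vertices in this window can have their degrees shifted by the coupling, and each lies at the threshold with probability $O(2^{-m})$ by \eqref{eq:gamma}; hence $\E{|W-1-W^{(v)}|}=O(n^{1-\alpha}\cdot 2^{-m})$ up to polylogarithmic factors. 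Summing against $p_v$ using $\sum_v p_v=\lambda_{n,m}$ together with the Chen--Stein prefactor $\min(1,\lambda_{n,m}^{-1})$ produces the first error term $O(2^{-m+(1-\alpha)\log n})$. The complementary bad event --- either the stamps stray outside the window, or the rewiring cascade blows up --- has probability $O(n^{-\beta})$ by exponential tail estimates on the geometric-like variables driving the pruning, and contributes the additive $O(n^{-\beta})$ term.

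The main obstacle will be making precise the assertion that conditioning on $\rd_{\rT_n}(v)\ge m$ admits a coupling whose discrepancies involve only $O(n^{1-\alpha})$ vertices concentrated at early stamps, and quantifying the exponential tail yielding $\beta=\beta(c')$. This comes down to a careful analysis of the conditional distribution of the edge-probabilities in $\RH_k$ and of $({\sigma}_k(v))_k$ given $\rd_{\rT_n}(v)\ge m$; the hypothesis $c'>1$ ensures that the conditioning biases $\sigma_n(v)$ toward small values with exponential strength, while $c<2$ keeps $\lambda_{n,m}$ large enough that the factor $n^{-\alpha}$ represents a genuine improvement over the trivial Poissonisation error.
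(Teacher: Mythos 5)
There is a genuine gap. Your outline reduces everything to the assertion that, for a \emph{generic} vertex $v$, conditioning on $\rd_{\rT_n}(v)\ge m$ ``amounts to forcing a specific random set of $m$ reattachments to $v$'' and admits a coupling whose discrepancies are confined to $O(n^{1-\alpha})$ early-stamp vertices, each at the threshold with probability $O(2^{-m})$. Neither claim is established, and the first mischaracterizes the conditional law: the degree of a generic $v$ in $\rT_n$ is not produced at a single pruning step; it is accrued over many applications of $\RH_k$ and can also \emph{decrease} when later vertices steal children of $v$, so the conditioning tilts the entire pruning history (equivalently, the whole selection set and coin sequence in the coalescent picture). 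Constructing a joint realization in which the two processes ``use the same randomness whenever possible'' and differ only on a small, early-stamp set is exactly the hard part, and the per-vertex threshold probability $O(2^{-m})$ would additionally have to be shown uniform under the conditioning --- which is the correlation estimate you are trying to prove. You flag this as ``the main obstacle'' but the proof stops there.

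The paper sidesteps all of this with two ingredients your proposal lacks. First, since the $I_v$ are exchangeable, \refC{cor:ChenStein} requires only \emph{one} coupling, for the single index $v=n$; and by Fact~\ref{fact:pruning} the event $\rd_{\rT_n}(n)\ge m$ is a condition on the $\RH_n$-set alone ($\rd_{\rT_n}(n)=\sum_{i=K}^{n-1}X_i$), independent of $(\rT_{n-1},\sigma_{n-1})$. Conditioning therefore never touches the history of the tree, and an explicit monotone coupling of $(K,L,X)$ with its conditioned version (Harris inequality plus Strassen, \refL{lem:coup1}) gives the containment $\{I_v<J_v\}\subset\{L'=\sigma_{n-1}(v)\}\cap\{\rd_{\rT_{n-1}}(v)\ge m-1\}$, whose probability is $O(n^{-1-\beta})$ by a Bernstein bound on $\rd_{\RRT_{n-1}}(j)$ --- this is where $c'>1$ actually enters (\refP{prop:coup-neg}), not through a bias of $\sigma_n(v)$ toward small values. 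Second, the term $O(2^{-m+(1-\alpha)\log n})$ does not come from a stamp-window accounting of rewired vertices at all; it comes from the separate pairwise \emph{almost negative correlation} estimate $\E{I_nI_v}-\E{I_n}\E{I_v}\le O(2^{-2m-\alpha\log n})$ (\refP{prop:asymp-neg}), proved in Appendix A via the Kingman coalescent representation (tail bounds on selection sets and on the last common merge time $\tau$), after splitting $\E{|I_v-J_v|}=\E{I_v-J_v}+2\E{(J_v-I_v)\I{I_v<J_v}}$. Without an analogue of this correlation estimate, and without an actual construction of the coupling, your argument does not yield either error term.
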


\begin{rmk}\label{rmk:proof}
A detailed but simple track of the conditions on $\alpha$, see \refP{prop:asymp-neg}, shows that there is a non-empty interval $\cI=((1-\alpha)\log e,c)$ such that if $c'\in \cI\cap (1,2)$, then the bounds in  \refT{thm:dTV} are, in fact, tending to zero.
\end{rmk}

\begin{rmk}
The exponent $\alpha$ is determined by \emph{almost} negative correlation between pairs of vertices in $\rT_n$ (see \refP{prop:asymp-neg}), while the exponent $\beta$ depends on an auxiliary coupling based on the Robin-Hood pruning (see \refP{prop:coup-neg}). We believe that the constraint on $c'>1$ could be relaxed by obtaining uniform bounds on $\p{\rd_{\RRT_n}(i)=m}$ rather than $\p{\rd_{\RRT_n}(i)\ge m}$. 
\end{rmk}

Finally, consider now the maximum degree $\Delta_n$ of a recursive tree $\RRT_n$. Note that $Z_m^\n>0$ if and only if $\Delta_n\ge m$. Therefore, having $\E{Z^\n_{\log n}}\approx 1$ indicates $\Delta_n\approx \log n$. In fact, Devroye and Lu showed that $\Delta_n/\log n\to 1$ a.s. \cite{DevroyeLu95}. The first tail bounds on $\Delta_n$ where obtained for $\p{\Delta_n<\lfloor \log n\rfloor +i}$ with $i\in \Z$ using singularity analysis of generating functions \cite{GohSchmutz02}. The relation between recursive trees and Kingman's coalescent provided simpler proofs to such results, extending it also to $i<2\ln n-\log n$ \cite{AddarioEslava15}. The bounds in \refT{thm:dTV} yield broader, tighter bounds. 

\begin{cor}\label{cor:Delta}
There exists $C>0$ such that uniformly over $0<i=i(n)<\log e \ln \ln n-C$,
\[\p{\Delta_n<\lfloor \log n\rfloor -i}=\exp\{-2^{i+\eps_n}\}(1+o(1)),\]
where $\eps_n=\log n-\floor{\log n}$. 
\end{cor}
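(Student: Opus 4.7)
The plan is to convert the event $\{\Delta_n < \lfloor \log n\rfloor - i\}$ into $\{Z^\n_m = 0\}$ with $m := \lfloor \log n\rfloor - i$, apply the Poisson approximation of \refT{thm:dTV}, and then identify the limit via the mean estimate \eqref{eq:gamma}.

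First I would note that $\Delta_n \ge m$ iff $Z^\n_m > 0$, so $\p{\Delta_n < \lfloor \log n\rfloor - i} = \p{Z^\n_m = 0}$. Next I would verify the hypothesis of \refT{thm:dTV}: since $i < \log e \ln\ln n - C = o(\ln n)$, we have
\[
\frac{m}{\ln n} = \log e - \frac{\eps_n + i}{\ln n} \longrightarrow \log e \approx 1.4427,
\]
so one may fix constants $1 < c' < c < \log e$ (in particular $c < 2$) with $c' \ln n < m < c \ln n$ for all large $n$. Using $-m + \log n = i + \eps_n$, \refT{thm:dTV} then supplies exponents $\alpha = \alpha(c)\in(0,1)$ and $\beta = \beta(c')>0$ with
\[
\bigl|\p{Z^\n_m = 0} - e^{-\lambda_{n,m}}\bigr| = O\bigl(2^{i+\eps_n}\, n^{-\alpha}\bigr) + O(n^{-\beta}).
\]

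From here the work is bookkeeping. By \eqref{eq:gamma}, $\lambda_{n,m} = 2^{i+\eps_n}(1 + O(n^{-\gamma}))$, and the bound $i < \log e \ln\ln n - C$ gives $2^{i+\eps_n} \le 2^{1-C}\ln n = O(\ln n)$. The multiplicative correction to the mean is therefore harmless: $\lambda_{n,m}\cdot O(n^{-\gamma}) = O(\ln n \cdot n^{-\gamma}) = o(1)$, so $e^{-\lambda_{n,m}} = e^{-2^{i+\eps_n}}(1+o(1))$. The same upper bound on $\lambda_{n,m}$ also yields the lower bound $e^{-\lambda_{n,m}} \ge n^{-2^{1-C}}$. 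Choosing $C$ large enough that $2^{1-C} < \min(\alpha,\beta)$ makes both polynomial error terms above a relative $o(1)$ against $e^{-\lambda_{n,m}}$, and combining the two estimates delivers $\p{Z^\n_m = 0} = e^{-2^{i+\eps_n}}(1+o(1))$ uniformly in $i$.

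The main (minor) obstacle is the compatibility of constants: the threshold $C$ must be chosen in terms of the exponents $\alpha,\beta$ from \refT{thm:dTV} so that the polynomial-in-$n$ error beats the logarithmic-in-$n$ lower bound on $e^{-\lambda_{n,m}}$. This works with a wide margin precisely because \refT{thm:dTV} produces polynomial decay while the restriction $i < \log e \ln\ln n - C$ forces $\lambda_{n,m}$ to be only logarithmic in $n$. No other step looks delicate — the argument is essentially a direct translation of the $\dTV$ bound into a statement about the atom at $0$ of a Poisson variable.
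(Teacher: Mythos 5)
Your strategy is exactly the paper's: identify $\{\Delta_n<\floor{\log n}-i\}$ with $\{Z_m^\n=0\}$ for $m=\floor{\log n}-i$, bound $|\p{Z_m^\n=0}-e^{-\lambda_{n,m}}|$ by the total variation estimate of \refT{thm:dTV}, use \eqref{eq:gamma} to replace $\lambda_{n,m}$ by $2^{i+\eps_n}$, and choose $C$ large in terms of $\alpha,\beta$ so that the polynomial error terms are $o(e^{-\lambda_{n,m}})$, using $2^{i+\eps_n}\le 2^{1-C}\ln n$. All of that bookkeeping is done correctly and matches the paper's proof. The one slip is in your verification of the hypothesis of \refT{thm:dTV}: you claim one can fix $1<c'<c<\log e$ with $c'\ln n<m<c\ln n$ for all large $n$. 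Since $m/\ln n=\log e-(\eps_n+i)/\ln n\to\log e$ \emph{from below}, the upper inequality $m<c\ln n$ eventually fails for every fixed $c<\log e$; the constants must straddle $\log e$, i.e. take $c'\in(1,\log e)$ and $c\in(\log e,2)$ (as the paper does), for which $m<\log n=\log e\,\ln n<c\ln n$ holds trivially. With that correction nothing downstream changes: $\alpha=\alpha(c)\in(0,1)$ and $\beta=\beta(c')>0$ still exist, and your choice of $C$ with $2^{1-C}<\min(\alpha,\beta)$ closes the argument exactly as in the paper.
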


The maximum of i.i.d. random variables is, under rather general conditions, distributed in the limit as the Gumbel (or double-exponential) distribution \cite{Gumbel35}; however lattice distributions are excluded from this regime. Addressing the case of integer-valued variables, Anderson gives sufficient conditions under which the Gumbel distribution serves as an approximation for their maximum \cite{Anderson70}; among those is the geometric distribution.
Now, when we randomize the labels in $\RRT_n$ (e.g. using the tree $\rT_n$ instead), vertex degrees become exchangeable and their limiting distributions are geometric. Although, the degrees of $\rT_n$ are not independent, their correlations are weak and the  Gumbel-type approximation still arises for the distribution of $\Delta_n$. Goh and Schmutz provide an alternative heuristic based on the fact that $\rd_{\RRT_n}(i)$, with $i\to \infty$ slowly, is asymptotically normal \cite{GohSchmutz02}.

\subsection*{Outline}
The paper is divided into two parts. First, we discuss more on the connection between recursive trees, Kingman's coalescents and other tree models in \refS{sec:K}. The precise definition of the Robin-Hood pruning $\RH_n$ together with the proof of \refT{thm:main} is given in \refS{sec:pruning}. Second, the results on high-degree vertices of recursive trees use the Chen-Stein method and the Robin-Hood pruning in a non-trivial way. An overview on how we use the Chen-Stein method is given \refS{sec:overview}. Assuming the existence of an auxiliary coupling (\refP{prop:coup-neg}), we complete the proofs concerning high-degree vertices in \refS{sec:DegProofs}. The auxiliary coupling, based on the Robin-Hood pruning, is presented in \refS{sec:DegCrux}. 
And finally, \refS{sec:Conclusions} discusses further avenues of research. 

\section{Kingman's coalescents and recursive trees: distinct representations}\label{sec:K}

Discrete coalescents are processes on partitions of $[n]$ that can be represented with different tree structures. On can encode the coalescent using an $n$-chain: a sequence of forests where, at all times there are $n$ vertices (or \emph{elements}), and $n-1$ edges are added one by one until a tree is formed. However, there is an more traditional construction using binary search trees (BST) where internal nodes correspond to merges and only external nodes correspond to \emph{elements} of the coalescent. In the next section we introduce the representation used in this paper and prove \refP{prop:K-DT}. Following that, we discuss the well-know bijection between BST's and recursive trees and the difference between the two coalescent representations. In addition, we explain the difference between the Yule-Harding model of phylogenetic trees and its uniform model, and highlight the importance of clarifying both the rules applied to the mergings in coalescent processes and their representation as trees.

\subsection{Recursive trees perspective}

Na and Rapoport loosely described this process as the construction of \emph{`static'} trees with $n$ vertices \cite{NaRapoport70}:  

\begin{quote}
``Initially, single elements
move about at random.
Each collision forms a couple. A collision of a
couple with a single element forms a triple, a collision of an $s$-tuple with a
$t$-tuple forms an $(s + t)$-tuple, and so on. At each collision a link is
established between an element of one $X$-tuple and an element of another,
the links being rigid so that the elements of the same $k$-tuple cannot collide.
The process goes on until the entire set of $n$ elements has been joined into
an $n$-tuple.''
\end{quote}

By changing the rule on how to link the elements on the tuples, we obtain distinct coalescent distributions. In the description in \cite{NaRapoport70} there are no restrictions on the elements allowed to be linked during the coalescent\footnote{Unfortunately, it was incorrectly presumed in \cite{NaRapoport70} that `static' trees build uniformly random unrooted labeled trees.}. In fact, the discrete multiplicative coalescent arises when any possible link is chosen uniformly at random. It is associated with Kruskal's algorithm for the minimum weighted spanning tree problem \cite{ab14}. 

Kingman's coalescent is characterized by the property that the merging probability of any pair of components is independent of the components' sizes. The representation used in this paper uses that, at each time the representative of each `tuple' is the current root of the tree and it is closely related to the `union-find' algorithm used in computer science (see e.g. \cite{Standish80}). A formal description follows. 

\begin{figure}
\begin{center}
\includegraphics[scale=.8]{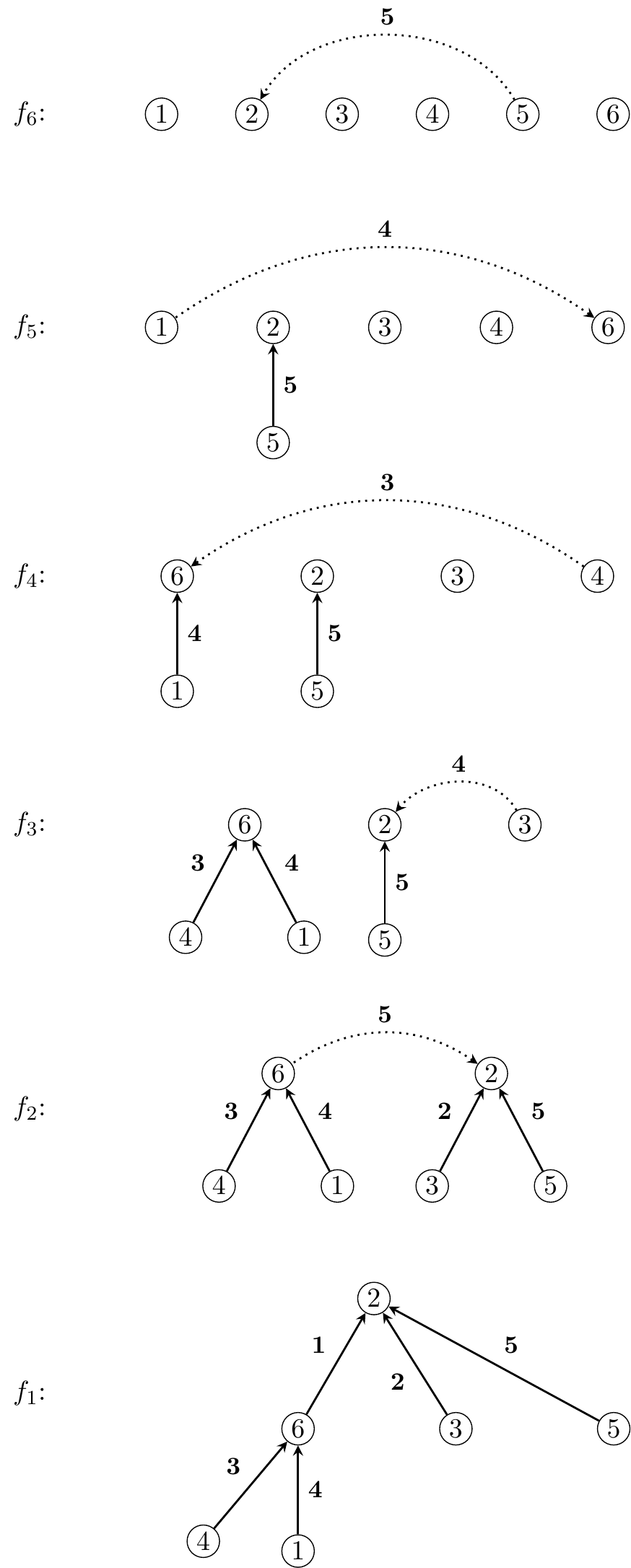}
\caption{An example of an $n$-chain with $n=6$. The edge labelling $\rho_n$ is presented with numbers in bold.}\label{chain}
\end{center}
\end{figure}

A forest $f$ is a set of trees with pairwise disjoint vertex sets. Denote by $V(f)$ and $E(f)$, respectively, the union of the vertex and edge sets of the trees in $f$. For $n\ge 1$, an $n$-chain is a sequence $C=(f_n,\ldots,f_1)$ of elements of $\cF_n=\{f:V(f)=[n]\}$ such that, for $1< i\le n$, $f_{i-1}$ is obtained from $f_i$ by adding a directed edge between the roots of some pair of trees in $f_i$. In particular, $f_n$ consists of $n$ one-vertex trees and $f_1$ consists of a single tree on $n$ vertices denoted by $T_C$. For an example see \refF{chain}.

Next we introduce the necessary notation to define Kingman's coalescent using $n$-chains. For an $n$-chain $(f_n, \ldots, f_1)$ and $1\le i\le n$, list the trees of $f_i$ in increasing order of their smallest-labeled vertex as $T_1^{(i)}, \ldots, T_{i}^{(i)}$. Independently for each $1< i\le n$ let $\{a_i,b_i\}\subset \{\{a,b\}: 1\le a<b\le i\}$ be uniformly chosen at random; in addition, let $\xi_i$ be independent Bernoulli random variables with mean $1/2$. 

\begin{dfn}\label{dfn:Kingman}
Kingman's $n$-coalescent is defined as $\bC=(F_n,\ldots,F_1)$ constructed as follows.
For $1< i\le n$, $F_{i-1}$ is obtained from $F_i$ by adding an edge between $r(T_{a_i}^{(i)})$ and $r(T_{b_i}^{(i)})$. If $\xi_i=1$ then direct the edge towards $r(T_{a_i}^{(i)})$; otherwise direct it towards $r(T_{b_i}^{(i)})$. The forest $F_{i-1}$ consists of the new tree and the remaining $i-2$ unaltered trees from $F_i$.
\end{dfn}

In other words, if  $\bC=(F_n,\ldots,F_1)$ is a Kingman's coalescent, then each of the trees of $F_i$ correspond to a set of coalesced elements after $n-i+1$ steps of the process. At each step, two sets (represented by their roots) coalesce and a new representative is chosen uniformly at random. 

To link $n$-chains with decorated trees, we first define a natural edge labeling that tracks the number of \emph{trees left} in the forest when a give edge comes along. Fix $C=(f_n,\ldots, f_1)$, for each $e\in E(T_C)$, let 
\[\rho_C(e)=\max\{i\in [n-1]:\, e\in E(f_i)\}.\]
We next define a vertex labeling $\sigma_C:V(T_C)\to [n]$. Let 
$\sigma_C(r(T_C))=1$, and for each $uv\in E(t_C)$,  let 
\[\sigma_C(u)=\rho_C(uv)+1.\]

The following proposition shows that the pair $(T_C,\sigma_C)\in \RD_n$ contains all the information to recover the original $n$-chain $C$; in other words, if $\CF_n$ denotes the set of $n$-chains, then $\RD_n$ and $\CF_n$ are in bijection. 

\begin{prop}\label{prop:RD-CF}
Let $\Upsilon:\CF_n \to \RD_n$ be defined as follows. For an $n$-chain $C=(f_n,\ldots, f_1)$, let $\Upsilon(C)=(T_C,\sigma_C)$. Then $\Upsilon$ is a bijection.  
\end{prop}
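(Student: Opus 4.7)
My plan is to construct an explicit inverse $\Psi:\RD_n\to\CF_n$ and verify that both compositions with $\Upsilon$ are identity maps. Given $(T,\sigma)\in\RD_n$, I set $f_1=\{T\}$ and, for $i=2,\ldots,n$, let $u_i=\sigma^{-1}(i)$ and define $f_i$ by deleting the edge $u_i p_T(u_i)$ from $f_{i-1}$. Since $\sigma$ is a stamp history, the unique vertex with $\sigma$-value $1$ is the root of $T$, so every $u_i$ with $i\ge 2$ is a non-root of $T$ and the deletion is well-defined. After $n-1$ deletions every edge of $T$ has been removed, so $f_n$ consists of $n$ isolated vertices, as required.

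The heart of the argument is to show that $\Psi(T,\sigma):=(f_n,\ldots,f_1)$ is a valid $n$-chain, meaning each $f_{i-1}$ is obtained from $f_i$ by joining the roots of two distinct trees in $f_i$. This is the step I expect to require the most care. By construction $u_i$ has just lost its unique parent edge, so it becomes a root of its component in $f_i$. For $v=p_T(u_i)$, either $v=r(T)$ (hence never has a parent), or the edge $v p_T(v)$ has $\sigma$-value $\sigma(v)<\sigma(u_i)=i$ because $\sigma$ is a stamp history; in the latter case the edge $v p_T(v)$ was deleted at an earlier step $\sigma(v)\in\{2,\ldots,i-1\}$, so $v$ is also a root in $f_i$. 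Finally, because $f_{i-1}$ is a forest and $u_i v$ is one of its edges, removing $u_i v$ splits the component containing $u_i$ and $v$ into two pieces, so $u_i$ and $v$ lie in distinct trees in $f_i$.

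It remains to check mutual inversion. For $C\in\CF_n$, the definition of $\sigma_C$ says the tail of the edge added in going from $f_i$ to $f_{i-1}$ has $\sigma_C$-value exactly $i$, so $\Psi$ deletes the edges of $T_C$ in the same order in which they were added, reconstructing $C$ and giving $\Psi(\Upsilon(C))=C$. Conversely, applying $\Upsilon$ to $\Psi(T,\sigma)$ returns $(T,\sigma)$ since by construction the final forest is $\{T\}$ and the edge deleted at step $i$ of $\Psi$ has tail $\sigma^{-1}(i)$, reproducing $\sigma$ vertex-by-vertex. As a sanity check one may also count $|\CF_n|=\prod_{i=2}^n i(i-1)=n!(n-1)!$, which matches $|\RD_n|$ by \refP{prop:RD-R}, so the injectivity that falls out of the construction of $\Psi$ already forces bijectivity.
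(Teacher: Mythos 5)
Your construction is correct and it takes a genuinely different route from the paper's. The paper first checks that $\Upsilon$ is well defined (that $\sigma_C$ is a stamp history of $T_C$), then proves injectivity by comparing two chains at the first index where they differ, and finally concludes bijectivity from the count $|\CF_n|=n!(n-1)!=|\RD_n|$. You instead exhibit an explicit inverse $\Psi$ that deletes the parent edges of $\sigma^{-1}(2),\ldots,\sigma^{-1}(n)$ in order, and your key verification is right: at step $i$ the tail $u_i$ becomes parentless, while $v=p_T(u_i)$ is either $r(T)$ or has $\sigma(v)<i$ so its parent edge was removed earlier, hence both are roots of the two distinct trees created by deleting $u_iv$; the two composition identities then give bijectivity directly, with no counting, and in addition make surjectivity constructive. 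That is a perfectly good, arguably more informative, alternative to the paper's injectivity-plus-cardinality argument.

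One step the paper proves that you skipped is well-definedness of $\Upsilon$ itself: for $\Upsilon(C)$ to lie in $\RD_n$ you must check that $\sigma_C$ is a stamp history of $T_C$, i.e.\ that the labels $\rho_C$ decrease along paths toward the root. Without it, your identity $\Psi(\Upsilon(C))=C$ only shows $\Upsilon$ is an injection into some larger set of pairs, not a bijection onto $\RD_n$. The check is short: when an edge $uv$ (tail $u$) is added in passing from $f_i$ to $f_{i-1}$, its head $v$ is a root of $f_i$, so the edge $v\,p_{T_C}(v)$ (if $v\neq r(T_C)$) is absent from $f_{i-1}$ and is added at a strictly later stage of the chain, whence $\rho_C(v\,p_{T_C}(v))<\rho_C(uv)$ and $\sigma_C(v)<\sigma_C(u)$; also $\sigma_C(r(T_C))=1$. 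Alternatively, your closing cardinality remark can be promoted from a sanity check to a genuine patch: $\Upsilon$ injective (from $\Psi\circ\Upsilon=\mathrm{id}$), image containing $\RD_n$ (from $\Upsilon\circ\Psi=\mathrm{id}_{\RD_n}$), and $|\CF_n|=|\RD_n|$ together force the image to be exactly $\RD_n$. Add either of these and the proof is complete.
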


\begin{proof}
First, we show that $\CF_n$ and $\RD_n$ have the same cardinality. To count the number of $n$-chains, consider constructing $(f_n,\ldots, f_1)$ by deciding which edge to add from $f_{k}$ to $f_{k-1}$. Since there are $k$ trees in $f_k$, when we have chosen $(f_n,\ldots, f_k)$, there are $k(k-1)$ possible directed edges to add. Therefore, $|\CF_n|=n!(n-1)!$.

Next, let $C=(f_n,\ldots,f_1)$ be an $n$-chain. For each $1\le i< n$, the new edge in $f_i$ joins the roots of two trees in $f_{i+1}$ and is directed towards the root of the resulting tree. Thus, the labels $\{\rho_C(e),\, e\in E(T_C)\}$ decrease along all paths in $T_C$ towards the root $r(T_C)$. Consequently, the labels $\{\sigma_C(v),\, v\in [n]\}$ are, indeed, an stamp history of $T_C$. It follows that $\Upsilon$ is well defined. 

Finally, let $C=(f_n,\ldots, f_1)$, $C'=(f'_n, \ldots, f'_1)$ be distinct $n$-chains and write	 $k=\min\{i:\, f_i\neq f'_i \}$. If $k=1$ then $T_C\neq T_{C'}$ and clearly, $\Upsilon(C)\neq \Upsilon(C')$. Otherwise $T_C=T_{C'}$, $f_{k-1}=f'_{k-1}$ and the (unique) edges $e\in E(f_{k-1})\setminus E(f_k)$ and $e'\in E(f'_{k-1})\setminus E(f'_k)$ are distinct. It follows that $e=uv\in f'_k$ and so $\sigma_C(u)=k> \sigma_{C' }(u)$. This shows that $\Upsilon$ is injective, and so $\Upsilon$ is a bijection between $\CF_n$ and $\RD_n$. 
\end{proof}

Using the bijection of \refP{prop:RD-CF}, it follows that \refP{prop:K-DT} boils down to showing that $\bC$ is uniformly random in $\CF_n$. 

\begin{proof}[Proof of \refP{prop:K-DT}]
Let $\bC=(F_n,\ldots, F_1)$ be a Kingman's coalescent. For any fixed $n$-chain $(f_n,\ldots, f_1)\in \CF_n$, 
\[\p{(F_n,\ldots,F_1)=(f_n,\ldots, f_1)}
=\prod_{k=1}^{n-1} \p{F_k=f_k|(F_n,\ldots,F_{k+1})=(f_n,\ldots, f_{k+1})}. \]
Among the $k(k+1)$ possible oriented edges connecting roots of $f_{k+1}$, exactly one of them can be added to $f_{k+1}$ to yield $f_k$. Thus, regardless of the sequence $(f_n,\ldots, f_1)$,
\[\p{(F_n,\ldots,F_1)=(f_n,\ldots, f_1)}=[(n-1)!n!]^{-1}.\]
Recall $F_1=\{T_{\bC}\}$. By \refP{prop:RD-CF}, $(T_{\bC},{\sigma}_{\bC}) \in \RD_n$ and it has a uniform distribution, since the bijection preserves the uniform measure of $\bC$. Finally, by \refP{prop:RD-CF}, it follows that $T_{\bC}\eqdist \rT_n$. The evolution of the forests is given by $\rho_\bC$; equivalently by $\sigma_{\bC}$. 
\end{proof}

\begin{rmk}\label{rmk:K-R}
It follows from Propositions~\refand{prop:RD-R}{prop:K-DT} that, for any fixed $n$ and up to relabeling of vertices, Kingman's coalescent correspond to recursive trees. See also \cite{ab14,AddarioEslava15} for direct proofs of this fact. 
\end{rmk}

\subsection{The binary search tree connection}

Binary search trees have been related to both recursive trees and phylogenetic trees. In this section we briefly discuss these connections and compare them with Kingman's coalescent. Let $\cB_n$ be the set containing all plane, rooted, unlabeled binary trees with $n$ external nodes. Trees in $\cB_n$ distinguish between left and right subtrees of any given internal vertex. It can be shown that the sizes $|\cB_n|=3\cdot5\cdots (2n-3)$ are given by the Catalan numbers. 

Binary search trees are the tree representation of the sorting algorithm Quicksort. Simply described, for each $n\ge 1$, the quicksort algorithm takes a permutation $\sigma\in [n]$ and constructs (step by step) a binary tree with internal vertex labels on $[n]$ as follows. The root is $\sigma(1)$ and vertices $\sigma(2),\ldots, \sigma(n)$ are sequentially added so that the final tree satisfies the following property: for any internal node $j$, all nodes on its left subtree are smaller than $j$ and all nodes on its right tree are larger than $j$. It follows that, given a shape of a binary tree $B\in \cB_{n+1}$, there is exactly one way to label internal vertices. 

There are $n!$ distinct permutations as input for the quicksort algorithm. Devroye introduced the representation of the binary search tree (process) using time stamps which record all the insertion process. Using this representation, the rotation correspondence maps (one-to-one) recursive trees (on $n-1$ vertices) and binary search trees (with $n$ external vertices). For a thorough description of the correspondence see \cite[Section 2, Figures 1-2]{HolmgrenJanson15}. 

On the other side, phylogenetic trees on $n$ species are represented by elements in $\cB_n$. In this case, species are assumed to have a common ancestor (the root), internal nodes are also ancestors and the time elapsed between differentiation of species, length of the branches, is omitted. Two common distributions on phylogenetic trees are the uniform one, known as the Catalan model, and the Yule-Harding model. The latter is a process that constructs trees starting from the root, by branching a uniformly random external node and replacing it with a cherry (an internal node with two external nodes). Clearly, this construction corresponds one-to-one to the quicksort algorithm. We remark that the Yule-Harding process does not yield uniform phylogenetic trees (as neither the BST is uniform in $\cB_n$). For further discussion between the two models, see e.g. \cite[Section 3]{BlumFrancoisJanson06}.
 
It has been presumed that Kingman's coalescent is the bottom's up construction of the Yule-Harding model, see e.g. \cite{ChangFuchs10}; however, such correspondence has to be done carefully, as merges in principle are not bound to satisfy planarity constraints. As we can see through the bijections and $n!$-to-1 mappings from Propositions \refand{prop:RD-CF}{prop:RD-R}, there should be a correspondence between BST's with time stamps and Kingman's coalescent. 

The construction of Kingman's coalescent as a binary tree in $\cB_n$ with time stamps is the following. Using the same random variables used in \refD{dfn:Kingman}, add an internal node connecting the two roots of the merging trees, while the coin flip indicates which of the trees is left child of the new internal vertex; the time stamps indicate the (reversed) order of addition of internal nodes. Note that in this construction, the symmetry breaking of the coin flip is still necessary.

Conversely, we describe how to interpret time stamps of a BST as the merging history of a Kingman's coalescent. To do so, we have to label external nodes uniformly at random (so that there are a total of $n!(n-1)!$ different processes). Now, the role of internal vertices is as follows. At step $k\in [n-1]$, the two set of external vertices in each of the subtrees of the vertex with time stamp $n-k$ are the subsets to be merged in the coalescent. 

Kingman's coalescent has uniform distribution when considering all possible merging histories with elements labeled exchangeably. However, considering only the final tree (either in $\cT_n$ or $\cB_n$) yields a non-uniform distribution: there are $n!(n-1)!$ total ways to merge the subtrees (if we use the symmetry breaking at each merging), but there are only $|\cT_n|=n^{n-1}$ and $|\cB_n|=3\cdot5\cdots (2n-3)$ different rooted, labeled trees and phylogenetic trees, respectively. 

\section{The Robin-Hood pruning} \label{sec:pruning}

The Robin-Hood pruning $\RH_n:\RD_{n-1}\to \RD_n$ is a random procedure based on randomizing the parameters  of a deterministic mapping $\rh_n:\RD_{n-1}\times \HP_{n}\to \RD_n$ where the set $\HP_{n}$ defines all possible ways to prune a decorated tree on $n-1$ vertices. The distribution on $\HP_n$ is tailored so that the Robin-Hood pruning, in fact, yields a coupling of $((\rT_n,\sigma_n);\, n\ge 1)$.  

First we introduce the necessary notation to define $\HP_n$, the deterministic pruning $\rh_n$ and verify that, indeed, the mapping $\rh_n$ is well defined. We then continue to define the distribution on $\HP_n$ used in defining the Robin-Hood pruning (\refD{dfn:RH-set}). The proof of \refT{thm:main} requires us to characterize the properties of the uniform distribution in decorated trees. 
For the characterization in \refL{lem:charac} and the proof of \refT{thm:main}, we underline the difference between deterministic elements $(T,\pi)$ of $\RD_n$ and random elements $(\bm{\rT},\bm{\sigma})$ using bold notation; the distribution of $(\bm{\rT},\bm{\sigma})$ is not given a priori. 

\subsection{A deterministic process}
Informally, we define all possible ways to prune a decorated tree on $n-1$ vertices using three parameters $(k,l,x)\in \HP_n$: the time stamp $k$ of the new vertex, its point of attachment $l$ given by time stamp, and the vertices to be rewired encoded by time stamp in the sequence $x=(x_1,\ldots, x_{n-1})$. Once a stamp history is given to a tree $T$, $\cV_n$ contains the vertices to be pruned and rewired towards the new vertex $n$. 

We now proceed to precise definitions. Let $n\ge 2$ and set
\[\HP_n=
\{(k,l,x):\, 1\le l<k\le n,\, x\in \{0,1\}^{n-1}\} \cup
\{(1,0,x):\, x\in \{0,1\}^{n-1},\, x_1=1\} ; \]
additionally, for $(k,l,x)\in \HP_{n}$ and a permutation $\sigma\in \cS_{n-1}$, let 
\[\cV_n(k,l,x,\sigma)=\cV_n(k,x,\sigma)
=\{v\in [n-1]:\, x_{\sigma(v)}=1,\, \sigma(v)\ge k\}.\] 

\begin{rmk}
The definition of $\HP_n$ is such that $\sigma^{-1}(1)\in \cV_n$ if and only if $k=1$.
\end{rmk}

The following deterministic \emph{pruning} is illustrated in  \refF{pruning}.
\begin{dfn}
Fix $n\ge 2$, $(T,\sigma)\in \RD_{n-1}$ and $(k,l,x)\in \HP_{n}$ . We define $(T',{\sigma}')$ and set 
 \[\rh_n((T,\sigma),(k,l,x))=(T',\sigma')\]
as follows. First, let $\cV=\cV_{n}(k,x,\sigma)$ and construct $T'$ from $T$: For each $v\in \cV\setminus \{r(T)\}$, replace the edge $vp_T(v)$ with an edge connecting $v$ to a new vertex labeled $n$. Now, if $k=1$ then  attach $r(T)$ to $n$; 
otherwise, attach vertex $n$ to $\sigma^{-1}(l)$. In other words, the edges of $T'$ are given by 

\[E(T')=
\begin{cases}
\qquad \quad (E(T)\cup \{vn;\, v\in \cV\}) \setminus \{ vp_T(v);\, v\in \cV\}  &\text{ if } k=1,\\
 \{n\sigma^{-1}(l)\}\cup (E(T)\cup \{vn;\, v\in \cV\}) \setminus \{ vp_T(v);\, v\in \cV\}&\text{ if } k>1.\end{cases}\]

Second, let ${\sigma}':[n]\to [n]$ be defined by ${\sigma}'(n)=k$ and for $v<n$, 
\[{\sigma}'(v)=\sigma(v)+\I{\sigma(v)\ge k}.\]
\end{dfn}

\begin{lem}\label{lem:T-rh}
For any $n\ge 2$, $\rh_n:\RD_{n-1}\times \HP_n\to \RD_n$ is well defined. That is, for any $(T,\sigma)\in \RD_{n-1}$ and $(k,l,x)\in \cC_{n}$, 
\[\rh_n((T,\sigma),(k,l,x))\in \RD_{n}.\]
\end{lem}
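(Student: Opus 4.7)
The plan is to verify the two conditions defining $\RD_n$: first that $T'\in\cT_n$ (i.e.\ $T'$ is a tree on $[n]$ rooted appropriately), and second that $\sigma'$ is an stamp history for $T'$ (i.e.\ $\sigma'(T')$ is increasing). The proof splits naturally into three steps, with a brief case distinction between $k=1$ and $k>1$.

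\textbf{Step 1: $\sigma'$ is a permutation of $[n]$.} This is immediate from its definition: for $v<n$, the map $v\mapsto \sigma(v)+\I{\sigma(v)\ge k}$ is a bijection from $[n-1]$ onto $[n]\setminus\{k\}$, and $\sigma'(n)=k$ fills in the missing value. In passing I note that $\sigma'(v)=1$ iff $\sigma(v)=1$, so the eventual root of $T'$ (the vertex with stamp $1$) is $r(T)$ when $k>1$ and is $n$ when $k=1$, which matches the edge prescription.

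\textbf{Step 2: $T'$ is a tree on $[n]$.} I would count edges and then verify connectivity. Removing the set $\{vp_T(v):v\in \cV\setminus\{r(T)\}\}$ from $T$ yields a spanning forest of $T$ whose components are: the component $R$ of $r(T)$, and, for each $v\in\cV\setminus\{r(T)\}$, the subtree $C_v$ consisting of $v$ and its descendants not lying below another element of $\cV$. The key observation for connectivity is that when $k>1$ one has $\sigma^{-1}(l)\notin\cV$ (because $l<k$), so $\sigma^{-1}(l)\in R$; thus adding the edges $\{vn:v\in\cV\}$ together with $n\sigma^{-1}(l)$ merges every $C_v$ and $R$ into a single component through $n$. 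When $k=1$, membership of $\sigma^{-1}(1)=r(T)$ in $\cV$ (forced by $x_1=1$) means the edge $r(T)n$ plays the analogous role of attaching $R$ to $n$. A direct tally gives $|E(T')|=n-1$ in both cases, so connectedness plus the edge count yields that $T'$ is a tree; rooting is determined by Step 1.

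\textbf{Step 3: $\sigma'(T')$ is increasing.} I would check, for each edge $uv\in E(T')$ with $v=p_{T'}(u)$, that $\sigma'(v)<\sigma'(u)$, by enumerating the three types of edges. (a) Old edges retained from $T$: here $\sigma(v)<\sigma(u)$, and since $\sigma'$ is obtained from $\sigma$ by adding the monotone shift $\I{\sigma(\cdot)\ge k}$, the strict inequality is preserved under every combination of shifts. (b) New edges $vn$ for $v\in\cV\setminus\{r(T)\}$: here $p_{T'}(v)=n$, and by the definition of $\cV$ we have $\sigma(v)\ge k$, hence $\sigma'(v)=\sigma(v)+1>k=\sigma'(n)$. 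The edge $r(T)n$ in the $k=1$ case is the same computation with $\sigma(r(T))=1\ge 1$. (c) The edge $n\sigma^{-1}(l)$ present only when $k>1$: here $p_{T'}(n)=\sigma^{-1}(l)$, and since $\sigma(\sigma^{-1}(l))=l<k$, we have $\sigma'(\sigma^{-1}(l))=l<k=\sigma'(n)$. These three cases cover every edge in the two definitions of $E(T')$, completing the verification.

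\textbf{Main obstacle.} The only mildly delicate point is the case bookkeeping for $k=1$ versus $k>1$, particularly because the formula for $E(T')$ uses the expression $\{vp_T(v):v\in\cV\}$ which must be read with the convention that $r(T)$ contributes no removed edge (as it has no parent). Once this convention is fixed and it is noted that in the $k>1$ case $r(T)\notin\cV$ while in the $k=1$ case $r(T)\in\cV$ with no parent-edge to remove but an added edge to $n$, the edge counts, connectivity check, and stamp-history verification all become short direct arguments.
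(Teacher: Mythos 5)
Your three-step structure (permutation check, tree check, stamp-history check) is sound; Step 1 is fine and Step 3 is correct and essentially identical to the paper's verification. The issue is in Step 2, at exactly the point where the real content of the lemma sits. You write that since $l<k$ one has $\sigma^{-1}(l)\notin\cV$, ``so $\sigma^{-1}(l)\in R$''. That inference is not valid by itself: a vertex outside $\cV$ can still lie in one of the pruned components $C_v$, namely when it is a descendant of some $v\in\cV$ whose own coordinate of $x$ is $0$. If $\sigma^{-1}(l)$ lay in some $C_v$, then adding the edges $vn$ and $n\sigma^{-1}(l)$ would create a cycle through $n$ and leave $R$ disconnected, and the edge count $|E(T')|=n-1$ would not rescue the argument --- this is precisely the bad configuration the lemma must exclude, so it cannot be waved through.

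What is missing is the observation that no \emph{ancestor} of $\sigma^{-1}(l)$ lies in $\cV$ either: along the path $\sigma^{-1}(l)=v_1,v_2,\ldots,v_j=r(T)$ the stamps satisfy $l=\sigma(v_1)>\sigma(v_2)>\cdots>\sigma(v_j)=1$ because $\sigma$ is a stamp history of $T$, and since $l<k$ every vertex on this path has stamp strictly less than $k$, hence is outside $\cV$. Consequently no edge of this path is removed, $\sigma^{-1}(l)\in R$, and your connectivity-plus-edge-count argument then goes through. This path argument is exactly how the paper rules out a cycle through $n$; once you insert it, your proof is complete, and apart from the explicit edge count and the decomposition into $R$ and the components $C_v$ (which the paper leaves implicit in the phrase ``it is clear that $T'$ is a tree'' for $k=1$), your route coincides with the paper's.
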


\begin{proof}
Write $\rh_n((T,\sigma),(k,l,x))=(T',\sigma')$. When $k=1$, it is clear that $T'$ is a tree. When $k>1$, let $w=\sigma^{-1}(l)$ be the parent of $n$ in $T'$ and let $(w=v_1,\ldots, v_j=r(T))$ be the path from $w$ to the root of $T$. Since $\sigma$ is a stamp history of $T$, 
$l=\sigma(v_1)>\sigma(v_2)>\cdots>\sigma (v_j)=1$;
moreover, $l<k$. It follows that $v_i\notin \cV(k,l,x,\sigma)$ for all $i\in [j]$ and consequently, no edges in the path from $n$ to the root in $T'$ closes a cycle by connecting to $n$. 

Now, we show that ${\sigma}'$ is a stamp history for $T'$. It is clear that ${\sigma}'$ is a permutation of $[n]$, so it suffices to prove that  ${\sigma}'(v)>{\sigma}'(p_{T'}(v))$, for all $v\in V(T)\setminus \{r(T')\}$. 
First, for vertices $v$ with $p_{T'}(v)=n$ we have $\sigma(v)\ge k$ and consequently 
${\sigma}'(v)=\sigma(v)+1>k={\sigma}'(n)$. 

Second, consider $v,w< n$ with $p_{T'}(v)=w$. It follows that $vw\in E(T)$ and thus  $\sigma(v)>\sigma(w)$. Consequently, $\I{\sigma(v)\ge k}\ge \I{\sigma(w)\ge k}$ and so ${\sigma}'(v)>{\sigma}'(w)$.
The last case occurs when $k>1$ and $p_{T'}(n)=w=\sigma^{-1}(l)$. We then have 
${\sigma}'(n)=k>l=\sigma(w)={\sigma}'(w)$.
\end{proof}

\begin{rmk}\label{rmk:non-increase}
Whenever $(k,l,x)\in \HP_n$ has $x_j=1$ for some $j\ge k$, setting $(T',\sigma')=\rh_n((T,\sigma),(k,l,x))$ and $v=\sigma^{-1}(j)$ yields $n=p_{T'}(v)\neq p_T(v)\in [n-1]$. This implies that $T\not\subset T'$. 
\end{rmk}

\begin{center}
\begin{figure}
\begin{subfigure}{\textwidth}
\centering
\includegraphics[scale=.7]{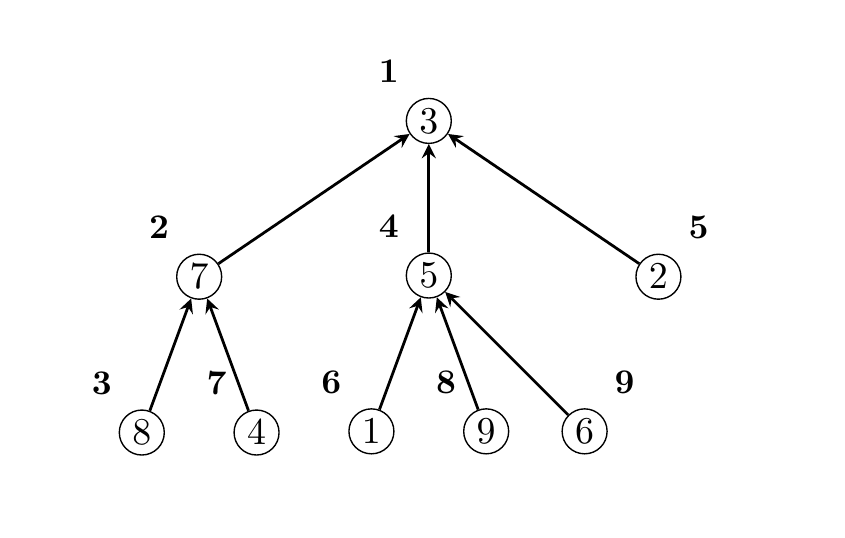}
\caption{A tree $(T,\sigma)$ in $\RD_9$. The permutation $\sigma$ is depicted with bold numbers.}
\end{subfigure}
\begin{subfigure}{\textwidth}
\centering
\includegraphics[scale=.65]{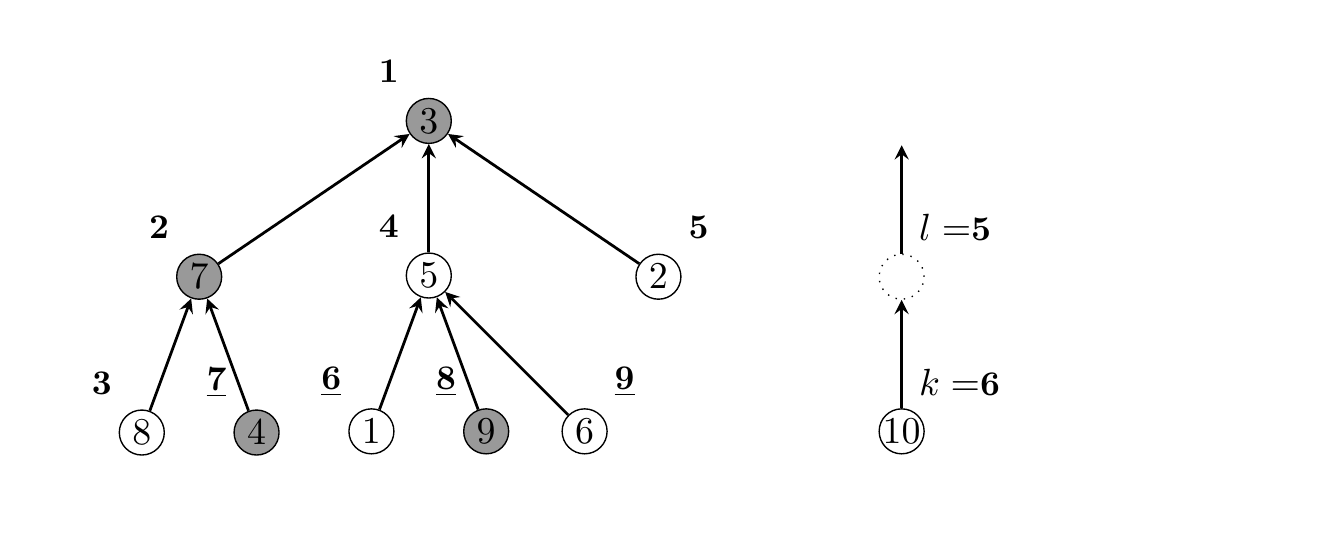}
\caption{Vertices in gray satisfy $X_{\sigma(v)}=1$ and underlined are time stamps $\sigma(i)\ge k$.}
\end{subfigure}
\begin{subfigure}{\textwidth}
\centering
\includegraphics[scale=.65]{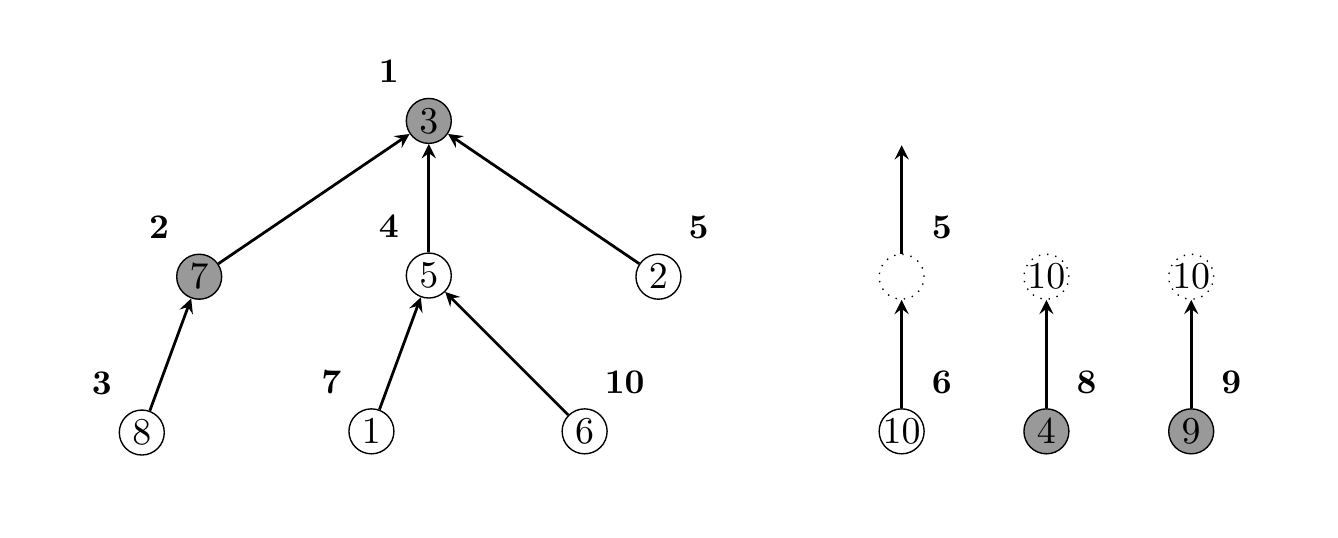}
\caption{Nodes $i$ with $\sigma(i)\ge k$ and $x_i=1$ have been pruned and time stamps have been adjusted.}
\end{subfigure}
\begin{subfigure}{\textwidth}
\centering
\includegraphics[scale=.7]{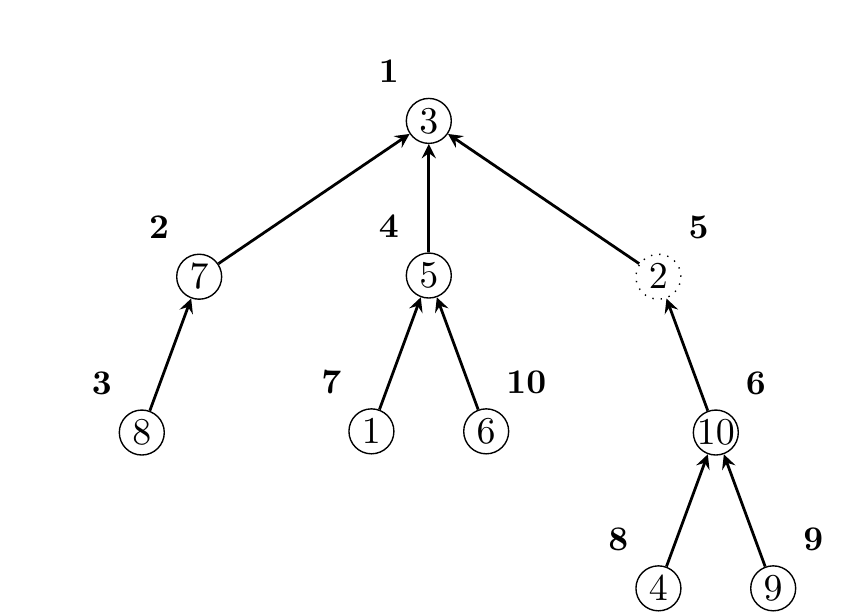}
\caption{The resulting tree $\rh_{10}((T,\sigma),(k,l,x))\in \RD_{10}$.}
\end{subfigure}
\caption{An example of the Robin-Hood pruning for $(T,\sigma)$ with $k=6,l=5$ and $x_1=x_2=x_7=x_8=1$; all other $x_i=0$.}\label{pruning}
\end{figure}
\end{center}

\subsection{The random process}
The $\RH_n$-set is a sample of  $\HP_n$ according to the following distribution. 

\begin{dfn}\label{dfn:RH-set}
Fix $n\ge 1$. Let $K\eqdist \Unif{1,2,\ldots, n}$; if $K=1$ let $L=0$, and if $K>1$ let $L= \Unif{1,2,\ldots K-1}$. Independently, let $X=(X_1,\ldots, X_{n-1})$ where $X_i\eqdist \Ber{1/i}$ are independent variables. An $\RH_n$-set is a triple of random variables with the same law as $(K,L,X)\in \HP_n$.
\end{dfn}

We are ready to define the Robin-Hood pruning. For each $n\ge 2$, let $(K,L,X)\in \HP_{n}$ be an $\RH_{n}$-set and define
\[\RH_n(T,\sigma)=\rh_n((T,\sigma),(K,L,X)).\]

The law of $\RH_n(T,\sigma)$ depends on the initial input $(T,\sigma)$; however, the distribution of the $\RH_{n}$-set is tailored so that $\RH_n(\rT_{n-1},\sigma_{n-1})$ preserves the uniform measure on decorated trees.
In order to prove \refT{thm:main}, we start with a characterization of $(\rT_n,\sigma_n)$.

\begin{lem}\label{lem:charac}
Let $n\ge 1$ be an integer. A random decorated tree $(\bm{\rT},\bm{\sigma})\in \RD_n$ is uniformly random if and only if the following properties are satisfied.
\begin{enumerate}[i)]
\item The permutation $\bm{\sigma}$ is uniformly random on $\cS_n$.
\item Conditionally given $\bm{\sigma}$, the vertices 
$(p_{\bm{\sigma}(\bm{\rT})}(\bm{\sigma}^{-1}(v)),\, v\in V(\bm{\rT})\setminus \{r(\bm{\rT})\} )$
 are independent. 
\item For all vertices $v,w\in [n]$ and indices $i,j \in [n]$, 
\begin{align}\label{uniform}
\p{p_{\bm{\rT}}(v)=w,\, \bm{\sigma}(v)=j,\, \bm{\sigma}(w)=i}
=\frac{1}{n(n-1)(j-1)}\I{j>i}. 
\end{align}
\end{enumerate}
\end{lem}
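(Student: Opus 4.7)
The plan is to establish the two directions of the equivalence separately, using \refP{prop:RD-R} as the main bridge.

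First, if $(\bm{\rT}, \bm{\sigma})$ is uniform on $\RD_n$, I would push this measure through the bijection $\varphi(T, \sigma) = (\sigma(T), \sigma)$ of \refP{prop:RD-R} to the uniform measure on $\cI_n \times \cS_n$. It follows that $\bm{\sigma}$ is uniform on $\cS_n$---which is (i)---and independent of $\bm{\sigma}(\bm{\rT})$, itself uniform on $\cI_n$. The standard ``one vertex at a time'' construction of a uniform random increasing tree then shows that the parents of $2, \ldots, n$ in $\bm{\sigma}(\bm{\rT})$ are independent, each uniform on the appropriate initial segment, giving (ii). Condition (iii) reduces to the factorization
\[
\p{p_{\bm{\rT}}(v)=w,\,\bm{\sigma}(v)=j,\,\bm{\sigma}(w)=i} = \p{p_{\bm{\sigma}(\bm{\rT})}(j)=i}\cdot\p{\bm{\sigma}(v)=j,\,\bm{\sigma}(w)=i},
\]
whose factors evaluate to $\I{j>i}/(j-1)$ and $1/[n(n-1)]$.

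For the converse, assume (i)--(iii) hold and set $q_\sigma^u(i) := \p{p_{\bm{\sigma}(\bm{\rT})}(u)=i \mid \bm{\sigma}=\sigma}$. By (ii), given $\bm{\sigma}=\sigma$ the law of $\bm{\sigma}(\bm{\rT})$ is the product measure $\prod_{u\ge 2} q_\sigma^u$; combined with (i), uniformity on $\RD_n$ is equivalent to $q_\sigma^u(i)=1/(u-1)$ for every triple $(\sigma, u, i)$. Dividing (iii) by $\p{\bm{\sigma}(v)=j, \bm{\sigma}(w)=i}=1/[n(n-1)]$ (using (i)) yields
\[
\p{p_{\bm{\rT}}(v)=w \mid \bm{\sigma}(v)=j,\, \bm{\sigma}(w)=i}=\I{j>i}/(j-1),
\]
and by (ii) the left-hand side is the average of $q_\sigma^j(i)$ over the $(n-2)!$ permutations $\sigma$ with $\sigma(v)=j,\sigma(w)=i$.

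The main obstacle is upgrading this averaged identity to the pointwise equality $q_\sigma^u(i)=1/(u-1)$. I would exploit that the right-hand side of (iii) is independent of the choice of $(v, w)$: as $(v, w)$ ranges over ordered distinct pairs, the same value $1/(j-1)$ is the average of $q_\sigma^j(i)$ on each of the $n(n-1)$ blocks of size $(n-2)!$ that partition $\cS_n$ according to the values of $\sigma$ at $(v, w)$. Together with the normalizations $\sum_{i<u} q_\sigma^u(i)=1$ and the natural invariance of (i)--(iii) under the $\cS_n$-action on $\RD_n$ by vertex relabeling, this rich family of linear constraints should force each $q_\sigma^u$ to be uniform on $[u-1]$, completing the backward direction.
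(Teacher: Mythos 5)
Your forward direction is fine and is essentially the paper's argument. The problem is the converse, and you have put your finger on exactly the right spot—but the closing step is a hope rather than a proof, and as described it cannot be completed. Conditions i) and iii) only give you block averages: for each ordered pair $(v,w)$, the mean of $q_\sigma^j(i)$ over the $(n-2)!$ permutations with $\sigma(v)=j$, $\sigma(w)=i$ equals $1/(j-1)$. These constraints, together with normalization, do \emph{not} force the pointwise identity once $n\ge 4$. For instance, with $n=4$ take $\bm{\sigma}$ uniform and, given $\bm{\sigma}=\sigma$, let the parents in $\bm{\sigma}(\bm{\rT})$ be independent with $p(2)=1$, $p(4)$ uniform on $\{1,2,3\}$, and $p(3)=1$ with probability $\tfrac12+\eps\chi(\sigma)$, $p(3)=2$ with probability $\tfrac12-\eps\chi(\sigma)$, where $\chi(\sigma)\in\{\pm1\}$ is the sign of the arrangement of the triple $(\sigma^{-1}(1),\sigma^{-1}(2),\sigma^{-1}(4))$. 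Since $\chi$ changes sign when $\sigma$ is composed with either value transposition $(2\,4)$ or $(1\,4)$, every block of the form $\{\sigma(v)=3,\sigma(w)=i\}$, $i\in\{1,2\}$, contains one permutation of each sign, so all the averaged identities in iii) (and trivially those for $j\in\{2,4\}$) still hold, while the law is visibly not uniform on $\RD_4$. The appeal to ``invariance of i)--iii) under the $\cS_n$-action'' does not rescue this: it only says the \emph{class} of admissible laws is closed under relabeling, not that any particular admissible law is exchangeable, which is what your averaging argument would need; the example above is precisely a non-exchangeable admissible law.

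What the upgrade really requires is a pointwise, conditional input: that $\p{p_{\bm{\rT}}(v)=w\mid \bm{\sigma}=\pi}=\I{\pi(v)>\pi(w)}/(\pi(v)-1)$ for every $\pi$, i.e.\ that the conditional law of $p_{\bm{\rT}}(v)$ given all of $\bm{\sigma}$ depends only on the two stamps $\pi(v),\pi(w)$. This is exactly the reduction the paper's converse performs (it replaces conditioning on $\{\bm{\sigma}=\pi\}$ by conditioning on $\{\bm{\sigma}(v)=\pi(v),\bm{\sigma}(w_v)=\pi(w_v)\}$, attributing the step to ii)); once one has that conditional identity, ii) lets one multiply over $v\neq r(T)$ to get $[(n-1)!]^{-1}$ for the conditional probability of each increasing shape, and i) then yields $[n!(n-1)!]^{-1}$ for every $(T,\pi)$, which is the uniform law. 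So your instinct not to take the averaged-to-pointwise passage for granted is sound; but the cure is to work with (and, where the lemma is applied, verify directly from the Robin-Hood construction, which does satisfy it) this conditional form of iii), rather than trying to squeeze it out of the unconditional block averages, which genuinely underdetermine the conditional parent distributions.
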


\begin{proof}
Let $(\bm{\rT},\bm{\sigma})=(\rT_n,\sigma_n)$ be uniformly random on $\RD_n$. Condition $i)$ follows directly from \refP{prop:RD-R} which states that $\bm{\sigma}$ is a uniformly random permutation and $\bm{\sigma}(\bm{\rT})$ has the law of a recursive tree $\RRT_n$. In addition, \refP{prop:RD-R} implies 
\begin{equation*}
(p_{\bm{\sigma}(\bm{\rT})}(\bm{\sigma}^{-1}(v)),\, v\in V(\bm{\rT})\setminus \{r(\bm{\rT})\} )\eqdist \{p_{\RRT_n}(j),\, 1<j\le n\},
\end{equation*}
from which conditions $ii)$ and $iii)$ immediately follow: Parents in recursive trees are chosen independently for each of the vertices, and for all $v,w,i,j\in [n]$, 
\begin{align*}
\p{p_{\bm{\rT}}(v)=w,\, \bm{\sigma}(v)=j,\, \bm{\sigma}(w)=i}
&=\frac{1}{n(n-1)} \p{p_{\bm{\rT}}(v)=w\,|\, \bm{\sigma}(v)=j,\, \bm{\sigma}(w)=i} \\
&=\frac{1}{n(n-1)} \p{p_{\bm{\sigma}(\bm{\rT})}(j)=i} \\
&=\frac{1}{n(n-1)(j-1)}\I{j>i}. 
\end{align*}

Now consider a random decorated tree $(\bm{\rT},\bm{\sigma})\in \RD_n$ satisfying  conditions $i)$-$iii)$. Fix a decorated tree $(T,\pi)\in \RD_n$, and for $v\in V(T)\setminus \{r(T)\}$, let $w_v=p_T(v)$. 
Condition $ii)$ on the conditional independence of parents gives, for $v\neq r(T)$,
\begin{align*}
\p{p_{\bm{\rT}}(v)=w_v|\bm{\sigma}=\pi} 
=&\p{p_{\bm{\sigma}(\bm{\rT})}(\bm{\sigma}(v))=\pi(w_v)|\, \bm{\sigma}(v)=\pi(v),\, \bm{\sigma}(w_v)=\pi(w_v)}\\
=&\frac{\p{p_{\bm{\rT}}(v)=w_v,\, \bm{\sigma}(v)=\pi(v),\, \bm{\sigma}(w_v)=\pi(w_v)}}{\p{\bm{\sigma}(v)=\pi(v),\, \bm{\sigma}(w_v)=\pi(w_v)}}
\end{align*} 
Using that $\pi$ is an stamp history for $T$, so $\pi(v)>\pi(w_v)$, and that 
$\bm{\sigma}$ is uniformly random, it follows from \eqref{uniform} that
\begin{align}\label{frac}
\p{p_{\bm{\rT}}(v)=w_v|\bm{\sigma}=\pi} =\frac{1}{\pi(v)-1}.
\end{align} 

Any increasing tree $T'\in \cI_n$  is determined by the set of parents $\{p_{T'}(v),\, 1<v\le n\}$. Using that $\pi(T)\in \cI_n$ and the conditional independence from condition $ii)$ we get 
\begin{align*}
\p{\bm{\sigma}(\bm{\rT})=\pi(T)\,|\, \bm{\sigma}=\pi}
&=\quad \p{p_{\bm{\sigma}(\bm{\rT)}}(j)=p_{\pi(T)}(j),\, 1<j\le n\,|\, \bm{\sigma}=\pi}\\
&=\quad \p{p_{\bm{\rT}}(v)=p_{T}(v),\, v\in V(t)\setminus \{r(T)\}\,|\, \bm{\sigma}=\pi}\\
&=\prod_{v\in V(T)\setminus r(T)} \p{p_{\bm{\rT}}(v)=p_{T}(v)\,|\, \bm{\sigma}=\pi}\\
&=\quad [(n-1)!]^{-1};
\end{align*}
the last equality holds by \eqref{frac} and the fact that $\{\pi(v),\, v\in  V(T)\setminus \{r(T)\}\}=\{2,\ldots, n\}$. Finally, using the equation above and that $\bm{\sigma}$ is uniformly random, we have
\begin{align*}
\p{(\bm{\rT},\bm{\sigma})=(T,\pi)}
&=\p{\bm{\sigma}(\bm{\rT})=\pi(T)\,|\, \bm{\sigma}=\pi}\p{\bm{\sigma}=\pi}\\
&=\frac{1}{n!}\p{\bm{\sigma}(\bm{\rT})=\pi(T)\,|\, \bm{\sigma}=\pi}
= [n!(n-1)!]^{-1}.
\end{align*}
This holds regardless of the choice of $(T,\pi)$, so $(\bm{\rT},\bm{\sigma})$ is uniformly random in $\RD_n$.

\end{proof}

We are now ready to prove \refT{thm:main}.

\begin{proof}[Proof of \refT{thm:main}]
Let $(\rT_{n-1},\sigma_{n-1})\in \RD_{n-1}$ be a uniformly random decorated tree. Let $(K,L,X)$ be an $\RH_{n}$-set and let $(\bm{\rT}, \bm{\sigma})=\rh((\rT_{n-1},\sigma_{n-1}),(K,L,X))$. It suffices to show that $(\bm{\rT},\bm{\sigma})$ satisfies the properties in \refL{lem:charac}. 

First, condition $i)$ follows from the construction of $\bm{\sigma}$ and the distributions of both $K$ and $\sigma_{n-1}$. Second, once conditioning on $\bm{\sigma}$, which is equivalent to conditioning on both $\sigma_{n-1}$ and $K$, we get 
\begin{align*}
\{p_{\bm{\rT}}(v),\, v\in V(\bm{\rT})\setminus \{r(\bm{\rT})\}\}
=&\{p_{\bm{\rT}}(v),\, 1<\bm{\sigma}(v)<\bm{\sigma}(n)\}\\
& \cup \{p_{\bm{\rT}}(v),\, \bm{\sigma}(n)\le \bm{\sigma}(v)\le n\}\\
=&\{p_{\rT_{n-1}}(v),\, v\in 1<\sigma_{n-1}(v)<K \}\\
& \cup \{p_{\bm{\rT}}(v),\, (2\vee K) \le \bm{\sigma}(v)\le n\},
\end{align*}
where the last two sets are conditionally independent given $\bm{\sigma}$. Now, since $(\rT_{n-1},\sigma_{n-1})$ is uniformly random in $\RD_{n-1}$,
the parents $\{p_{\rT_{n-1}}(v),\, v\in 1<\sigma_{n-1}(v)<K\}$ are independent, conditionally given $\sigma_{n-1}$ (and thus, also conditionally given $\bm{\sigma}$). On the other hand, for $v$ with $\bm{\sigma}(v)\ge K$,
\begin{align*}
p_{\bm{\rT}}(v)=\begin{cases}
n & \text{ if } X_{\bm{\sigma}(v)-1}=1, \\
p_{\rT_{n-1}}(v) &  \text{ if } X_{\bm{\sigma}(v)}=0, \\
\bm{\sigma}^{-1}(L) & \text{ if } \bm{\sigma}(v)=K.
\end{cases}
\end{align*}
Note that $p_{\bm{\rT}}(v)$ is determined independently from other vertices, thus $\{p_{\bm{\rT}}(v),\, K\le \bm{\sigma}(v)\le n\}$ are also independent, conditionally given $\bm{\sigma}$.
This implies that condition $ii)$  is satisfied.

Third, fix $1\le i<j\le n$ and fix distinct $v,w\in [n]$. We consider three cases; namely $v=n$, $w=n$, and $\{v,w\}\subset [n-1]$. Let 
\begin{align*}
A_1&=\{p_{\bm{\rT}}(n)=w,\, \bm{\sigma}(n)=j,\, \bm{\sigma}(w)=i\},\\
A_2&=\{p_{\bm{\rT}}(v)=n,\, \bm{\sigma}(v)=j,\, \bm{\sigma}(n)=i\}, \\
A_3&=\{p_{\bm{\rT}}(v)=w,\, \bm{\sigma}(v)=j,\, \bm{\sigma}(w)=i\}.
\end{align*}
It remains to show that the probabilities of $A_1,A_2,A_3$ are given by \eqref{uniform} for all $i,j\in [n]$. 
The event $p_{\bm{\rT}}(n)=w$ implies that $\sigma_{n-1}(w)=L<K$. Therefore, 
$A_1$ occurs precisely when $K=j$, $L=i$, and $\sigma_{n-1}(w)=i$. Then,
\begin{align*}
\p{A_1}=\p{K=j, L=i}\p{\sigma_{n-1}(w)=i}=\frac{1}{n(j-1)(n-1)}. 
\end{align*}
Next, $p_{\bm{\rT}}(v)=n$ implies that $\sigma_{n-1}(v)\ge K$ and thus $\bm{\sigma}(v)=\sigma_{n-1}(v)+1$. It then follows that 
$A_2$ occurs when  $K=i$, $\sigma_{n-1}(v)=j-1$, and $X_{j-1}=1$. Therefore,
\begin{align*}
\p{A_2}=\p{K=i, X_{j-1}=1}\p{\sigma_{n-1}(v)=j-1} =\frac{1}{n(j-1)(n-1)}. 
\end{align*}
For the last case, since $u,v<n$, it follows that $K\notin \{i,j\}$. For each $k\in [n]\setminus \{i,j\}$ let
\[A_{3,k}= \{p_{\bm{\rT}}(v)=w,\, \bm{\sigma}(v)=j,\, \bm{\sigma}(w)=i,\, K=k\}.\] 
In computing the probabilities $\p{A_{3,k}}$ we use that $(\rT_{n-1},\sigma_{n-1})$ is uniformly random in $RD_{n-1}$. If $K>j$, then both $\sigma_{n-1}(v)=\bm{\sigma}(v)$ and $\sigma_{n-1}(w)=\bm{\sigma}(w)$; in addition, $p_{\bm{\rT}}(v)=w$ only if $p_{\rT_{n-1}}(v)=w$. Therefore, if $k>j$, then
\begin{align*}
\p{A_{3,k}}
&=\p{K=k}\p{p_{\rT_{n-1}}(v)=w,\, \sigma_{n-1}(v)=j,\, \sigma_{n-1}(w)=i}\\
&=\frac{1}{n(n-1)(n-2)(j-1)}.
\end{align*}
Similarly, if $K<j$, then $\sigma_{n-1}(v)=\bm{\sigma}(v)-1$, $\sigma_{n-1}(w)=\bm{\sigma}(w)-\I{K<i}$, and additionally $X_{j-1}=0$. It then follows that, if $k<j$, 
\begin{align*}
\p{A_{3,k}}
&=\p{K=k,\,X_{j-1}=0}\p{p_{\rT_{n-1}}(v)=w,\, \sigma_{n-1}(v)=j-1,\, \sigma_{n-1}(w)=i-\I{K<i}}\\
&=\frac{1}{n}\cdot \frac{j-2}{j-1} \cdot \frac{1}{(n-1)(n-2)(j-2)}.
\end{align*}

We have shown that $\p{A_{3,k}}$ is uniform for all $k\in [n]\setminus \{i,j\}$, and we get 
\begin{align*}
\p{A_3}= \sum_{k\neq i,j} \p{A_{3,k}}= \frac{1}{n(n-1)(j-1)}.
\end{align*}
Altogether, we have shown that condition $iii)$ is satisfied and so the proof is complete.
\end{proof}

\section{The Poisson approximation}\label{sec:overview}

Recall that $(\rT_n,\sigma_n)$ is a uniform decorated tree and that $\rT_n$ has the shape of a recursive tree. In fact, \refP{prop:RD-R} implies that the following distributional identity holds, for all $n\in \N$,
\begin{equation*}
(\rd_{\rT_n}(\bm{\sigma}_n^{-1}(i));\, i\in [n])\eqdist (\rd_{\RRT_n}(i);\, i\in [n]).
\end{equation*}
It follows that the distribution of $(Z_m^\n,\, m\ge 1)$ and $\Delta_n$ does not change if we redefine them as $Z_{m}^\n=\#\{v\in [n]:\, \rd_{\rT_n}(v)\ge m \}$ and $\Delta=\max\{\rd_{\rT_n}(v):\, v\in [n]\}$. However, the correlations in 
$(\rd_{\rT_n}(v);\, v\in [n])$ have a subtle difference in comparison with those in $(\rd_{\RRT_n}(i);\, i\in [n])$.
To see this, observe that  $(\rd_{\RRT_n}(i),\, i\in [n])$ is negative orthant dependent; for a definition see \cite{DubhashiRanjan98}. This fact can be proven by induction from the two-vertex case $(\rd_{\RRT_n}(i),\rd_{\RRT_n}(j))$, which, in turn, follows essentially from the negative orthant dependency of multinomial distributions, see e.g. \cite[Lemma 1]{DevroyeLu95}. As a consequence, for all $i,j\in [n]$, 
\begin{align}\label{orthant}
\p{\rd_{\RRT_n}(i)\ge m,\, \rd_{\RRT_n}(j)\ge m}\le \p{\rd_{\RRT_n}(i)\ge m}\p{\rd_{\RRT_n}(j)\ge m}.
\end{align}
On the other hand, the following proposition gives conditions on $m$ for the degrees in $\rT_n$ to have a pairwise `almost' negative correlation. 

\begin{prop}\label{prop:asymp-neg}
For any $c\in (0,2)$ there exists $\alpha=\alpha(c)>0$ such that uniformly for $m=m(n)<c\ln n$ and distinct $v,w\in [n]$,   
\begin{align}\label{eq:asymp-neg}
\p{\rd_{\rT_n}(v)\ge m,\, \rd_{\rT_n}(w)\ge m}\le \p{\rd_{\rT_n}(v)\ge m}\p{\rd_{\rT_n}(w)\ge m} + O(2^{-2m-\alpha\log n}).
\end{align}
Moreover, $\alpha<\frac{1}{4}(1-c+\sqrt{1+2c-c^2})<1$.
\end{prop}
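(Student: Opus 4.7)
By \refP{prop:RD-R} and the bijection $\varphi(T,\sigma) = (\sigma(T),\sigma)$ used in its proof, under the uniform law on $\RD_n$ the permutation $\sigma_n$ is independent of the recursive tree $R := \sigma_n(\rT_n)$, so $\rT_n = \sigma_n^{-1}(R)$ and $\rd_{\rT_n}(v) = \rd_R(\sigma_n(v))$. Averaging over $\sigma_n$, for distinct $v,w\in[n]$,
\[
\p{\rd_{\rT_n}(v)\ge m,\,\rd_{\rT_n}(w)\ge m} = \frac{1}{n(n-1)}\sum_{i\ne j}\Pi_{ij},\qquad \p{\rd_{\rT_n}(v)\ge m}^2 = \frac{S^2}{n^2},
\]
where $\Pi_{ij}:=\p{\rd_R(i)\ge m,\rd_R(j)\ge m}$, $p_i:=\p{\rd_R(i)\ge m}$, and $S:=\sum_i p_i=\lambda_{n,m}$. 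The problem thus reduces to an upper bound on $\sum_{i\ne j}\Pi_{ij}$ sharp enough to leave only the asserted additive slack when compared with $S^2/n^2$.

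The main ingredient is the Bernoulli representation $\rd_R(i)=\sum_{k=i+1}^n \I{p_R(k)=i}$, with $p_R(k)\sim\Unif{[k-1]}$ independent across $k$. For $i<j$, I would decompose $\rd_R(i)=U_{ij}+V_i$ with $U_{ij}:=\sum_{i<k\le j}\I{p_R(k)=i}$ independent of $(V_i,W_j):=(\sum_{k>j}\I{p_R(k)=i},\,\rd_R(j))$; for each $k>j$ the indicators $\I{p_R(k)=i}$ and $\I{p_R(k)=j}$ are disjoint events, so $(V_i,W_j)$ is negatively associated in the multinomial sense of \cite{DubhashiRanjan98}. Conditioning on $U_{ij}$ and applying this negative association gives the factorisation $\Pi_{ij}\le p_i p_j$, which is exactly \eqref{orthant} for the pair $(i,j)$. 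Summing, $\sum_{i\ne j}\Pi_{ij}\le S^2-\sum_i p_i^2$, and the Cauchy--Schwarz bound $\sum_i p_i^2\ge S^2/n$ already delivers the crude negative-correlation conclusion $\p{\rd_{\rT_n}(v)\ge m,\rd_{\rT_n}(w)\ge m}\le S^2/n^2$. To upgrade this to the quantitative slack $O(2^{-2m-\alpha\log n})$ with the specific $\alpha(c)$ of the statement, I would quantify \emph{how much} $\Pi_{ij}$ falls below $p_i p_j$: a Chernoff-type estimate applied to the shared pool $V_i+W_j$, exploiting that conditional on its size $s$ the split is a fair binomial $\Bin{s,1/2}$, contributes an extra factor of order $n^{-\alpha}$ on the event $\{V_i\ge m-u,\,W_j\ge m\}$. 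Combining this with the sharp marginal estimate $\lambda_{n,m}=2^{-m+\log n}(1+O(n^{-\gamma}))$ from \eqref{eq:gamma} would give the required bound.

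The main obstacle is pinning down the explicit exponent $\alpha(c)<\tfrac{1}{4}(1-c+\sqrt{1+2c-c^2})$ uniformly over $m<c\ln n$ and all pairs $(i,j)$. The square root signals a quadratic optimisation: writing $m=m_1+m_2$ according to how much of $\rd_R(i)$ is inherited from the independent part $U_{ij}$ versus the shared part $V_i$, the joint Chernoff exponent for $\{U_{ij}\ge m_1\}\cap\{V_i\ge m_2\}\cap\{W_j\ge m\}$ becomes a quadratic in the split parameter (reflecting the fair-coin structure of the shared multinomial), and optimising over both $m_1\in[0,m]$ and the pivot $j\in [n]$ under the constraint $m<c\ln n$ yields a constrained quadratic whose positive root is exactly $\tfrac{1}{4}(1-c+\sqrt{1+2c-c^2})$. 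Worst cases occur for pairs $(i,j)$ with $j$ small, where the shared region $\{k>j\}$ is long and the Bernoulli parameters $1/(k-1)$ most influential; handling these uniformly requires the same tail-bound machinery used to establish \eqref{eq:gamma} in Appendix~A, and this bookkeeping is where the proof is most delicate.
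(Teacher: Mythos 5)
Your reduction and the ``crude'' half of your argument are correct, and they in fact prove the printed inequality by a route entirely different from the paper's: using the independence of $\sigma_n$ and $R=\sigma_n(\rT_n)$ coming from the bijection in \refP{prop:RD-R}, the pairwise bound $\Pi_{ij}\le p_ip_j$ (this is the paper's \eqref{orthant}, and your decomposition-plus-negative-association proof of it is the standard one, cf.\ \cite{DevroyeLu95,DubhashiRanjan98}), and Cauchy--Schwarz, you obtain $\p{\rd_{\rT_n}(v)\ge m,\,\rd_{\rT_n}(w)\ge m}\le \p{\rd_{\rT_n}(v)\ge m}\p{\rd_{\rT_n}(w)\ge m}$ with \emph{no} error term, for every $m$. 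Since the term $O(2^{-2m-\alpha\log n})$ in \eqref{eq:asymp-neg} is an allowed slack and the ``moreover'' clause only caps $\alpha$, nothing more is needed for the statement as literally written; your plan to ``upgrade'' by quantifying how far $\Pi_{ij}$ falls below $p_ip_j$ rests on a misreading (a larger deficit only strengthens an inequality you already have), and in any case that part is a sketch, not a proof. By contrast, the paper proves the proposition through the Kingman representation (selection sets $\cS_n(v)$, the merge time $\tau$, and the event $\cG_m$), not through the recursive-tree labels.

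The genuine gap is that your route does not deliver the quantitative content the proposition carries in this paper. What is used in the proof of \refT{thm:dTV} is a bound on $\E{I_n}\E{I_v}-\E{I_nI_v}$, i.e.\ the \emph{product minus the joint}, and that is also what Appendix~A actually establishes: the joint probability equals $2^{-2m}\p{(\cS_n(v),\cS_n(w))\in\cG_m}$ (cf.\ \eqref{G}), \refL{tau} and \refL{S id} give $\p{(\cS_n(v),\cS_n(w))\in\cG_m}\ge 1-o(n^{-\gamma})$ for every $\gamma<\frac14\bigl(1-c+\sqrt{1+2c-c^2}\bigr)$ (\refL{lower m}, by balancing the exponents $1-\eps-c/2$ and $\eps^2/(\eps+c/2)$), and together with $\p{\rd_{T^\n}(v)\ge m}\le 2^{-m}$ (\refP{upper m}) this shows the joint is \emph{at least} the product minus $2^{-2m}o(n^{-\alpha})$, with $\alpha$ arbitrarily close to that threshold --- which is exactly what \refR{rmk:proof} and \refT{thm:normal} need. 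Your negative-orthant-dependence argument only bounds the joint from above by the product and says nothing about how much smaller it can be; your proposed Chernoff refinement pushes the estimate in the wrong direction for this purpose, and the constant $\frac14\bigl(1-c+\sqrt{1+2c-c^2}\bigr)$ arises from the coalescent optimization just described, not from a split of $m$ between shared and independent Bernoulli pools. So your first half stands as a simpler proof of the one-sided inequality as printed, but it cannot replace the appendix argument, which is what supplies the reverse inequality and the explicit exponent used downstream.
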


We make precise the constraints on $\alpha$ as this is crucial to \refT{thm:normal}. A weaker version of \refP{prop:asymp-neg}, without explicit error bounds, was proved in \cite[Proposition 4.2]{AddarioEslava15}; a complete proof of \refP{prop:asymp-neg} appears in Appendix A. 

Although we do not claim the bounds in \refP{prop:asymp-neg} are optimal, it seems that the property in \eqref{orthant} is lost when randomizing the vertex labels of $\RRT_n$ to obtain $\rT_n$. The bound in \eqref{eq:asymp-neg} will be an important input to the Chen-Stein Method. 

Briefly explained, our application of the Chen-Stein method compares, in total variation distance, the sum $Z_m^\n$ with respect to a Poisson variable with mean $\E{Z_m^\n}$. The strength of the bounds depend on finding suitable couplings between $(\I{\rd_{\rT_n}(v)\ge m}; v\in [n])$ and conditional versions of such variables. More precisely, we use the pruning procedure to obtain $T_n$ and Fact~\ref{fact:pruning} describes $(\rd_{\rT_n}(i),\, i\in [n])$  in terms of the independent elements $(\rd_{\rT_{n-1}}(i),\, i\in [n-1])$ and $\rd_{\rT_n}(n)$. This allows us to analyze the conditional law of $(\rd_{\rT_n}(i),\, i\in [n-1])$ given $\{\rd_{\rT_n}(n)\ge m\}$ holds. 

Before going into further details we layout the necessary notation. Given probability measures $\mu$ and $\nu$, a coupling of $\mu$ and $\nu$ is a pair $(X,Y)$ of  random variables (either real or vector-valued) with $X\sim \mu$ and $Y\sim \nu$. 
Let $I=(I_a ,\,a\in \cA)$ be a collection of $\{0,1\}$-valued random variables. Let $\mu$ be the law of $W=\sum_{a\in \cA} I_a$ and for $a\in \cA$ let $\nu_a$ be the conditional law of $W$ given that $I_a=1$, so 
\[\nu_a(B)=\p{W_a\in B}=\p{W\in B\,|\, I_a=1}.\]

We use the Chen-Stein method stated below.

\begin{thm}[{\cite[Theorem 3.7]{Goldschmidt00}}]\label{thm:ChenStein}
Let $I=(I_a ,\,a\in \cA)$ be a collection of $\{0,1\}$-valued random variables  and let $W=\sum_{a\in \cA} I_a$. For each $a\in \cA$ fix a coupling $(W,W_a)$ of $\mu$ and $\nu_a$. Then with $\lambda=\E{W}$, we have
\[\dTV(W,\Poi{\lambda})\le \min\{\lambda^{-1},1\} \sum_{a\in \cA} \E{I_a} \E{|W-(W_a-1)|}.\]
\end{thm}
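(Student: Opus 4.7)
The plan is to derive this Chen--Stein bound via Stein's method for the Poisson distribution, in its standard form. The starting point is Stein's characterization: a nonnegative integer random variable $Y$ is $\Poi{\lambda}$-distributed if and only if $\E{\lambda f(Y+1) - Y f(Y)} = 0$ for every bounded $f:\N\cup\{0\}\to\R$. The Stein operator is $(\mathcal{A}f)(k) = \lambda f(k+1) - k f(k)$, and for each $B\subseteq \N\cup\{0\}$ one solves the Stein equation
\[
\lambda f_B(k+1) - k f_B(k) = \I{k\in B} - \Poi{\lambda}(B),
\]
to obtain a (unique, bounded) solution $f_B$. The first technical input I would quote (or prove by a direct recursion) is the supremum bound on the first difference, $\|\Delta f_B\|_\infty := \sup_k |f_B(k+1)-f_B(k)| \le \min\{1,\lambda^{-1}\}$; this is where the $\min\{\lambda^{-1},1\}$ factor in the theorem originates.

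With the Stein equation in hand, the strategy is to evaluate the right-hand side at $W = \sum_{a\in\cA} I_a$, take expectations, and compare to the left-hand side. The key algebraic rewriting uses the size-biasing trick: because $I_a$ is $\{0,1\}$-valued,
\[
\E{W f_B(W)} = \sum_{a\in\cA} \E{I_a f_B(W)} = \sum_{a\in\cA} \E{I_a}\,\E{f_B(W)\mid I_a=1} = \sum_{a\in\cA}\E{I_a}\,\E{f_B(W_a)},
\]
where $W_a$ has the conditional law $\nu_a$. Combining this with $\lambda = \sum_a \E{I_a}$ so that $\E{\lambda f_B(W+1)} = \sum_a \E{I_a}\,\E{f_B(W+1)}$, I obtain the crucial identity
\[
\Poi{\lambda}(B) - \p{W\in B} = \sum_{a\in\cA} \E{I_a}\,\E{f_B(W+1) - f_B(W_a)}.
\]

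Now I would bring in the coupling $(W,W_a)$ fixed for each $a$. Writing the displacement as $|W+1 - W_a| = |W - (W_a - 1)|$ and applying the bound on $\|\Delta f_B\|_\infty$ yields
\[
|f_B(W+1)-f_B(W_a)| \le \min\{1,\lambda^{-1}\}\,|W - (W_a - 1)|.
\]
Summing over $a$ and taking the absolute value of the left-hand side gives
\[
|\p{W\in B} - \Poi{\lambda}(B)| \le \min\{1,\lambda^{-1}\} \sum_{a\in\cA} \E{I_a}\,\E{|W - (W_a - 1)|},
\]
and taking the supremum over $B$ produces the claimed bound on $\dTV(W,\Poi{\lambda})$.

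The main technical obstacle is the uniform bound $\|\Delta f_B\|_\infty \le \min\{1,\lambda^{-1}\}$: the factor $1$ is essentially trivial (one checks that $|f_B|\le 1/\lambda$ and telescopes), but the sharper $\lambda^{-1}$ factor requires a careful analysis of the explicit recursion $f_B(k+1) = (k f_B(k) + \I{k\in B} - \Poi{\lambda}(B))/\lambda$, exploiting cancellation between the two extreme cases $B = \{j\}$ and $B = \{j\}^c$. Everything else is a routine manipulation of expectations; the conditional-law rewriting via size-biasing and the telescoping bound through the coupling are the substantive conceptual moves, while the sharp Stein-magic factor $\min\{1,\lambda^{-1}\}$ is the delicate analytic point.
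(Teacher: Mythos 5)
The paper does not prove this result---it imports it from the cited lecture notes---and your argument is exactly the standard Stein--Chen proof behind that reference: the Stein equation for $\Poi{\lambda}$, the uniform bound $\sup_k|f_B(k+1)-f_B(k)|\le \min\{1,\lambda^{-1}\}$ (indeed $\lambda^{-1}(1-e^{-\lambda})$), the size-bias identity $\E{W f_B(W)}=\sum_{a\in\cA}\E{I_a}\E{f_B(W_a)}$ valid for $\{0,1\}$-valued summands, and the coupling estimate $|f_B(W+1)-f_B(W_a)|\le \min\{1,\lambda^{-1}\}\,|W-(W_a-1)|$. The proof is correct as outlined; the only blemishes are a harmless sign flip in your key identity (irrelevant once absolute values are taken) and the slightly loose aside about how the factor $1$ arises---both immaterial, since one may simply quote the standard bound on the first difference of the Stein solution.
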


To apply \refT{thm:ChenStein} with as tight as possible bounds, one can exploit properties of the variables $I_a$ or construct couplings of $\mu$ and $\nu$ with specific properties.

\begin{cor}\label{cor:ChenStein}
Let $I=(I_a ,\,a\in \cA)$ be a collection of $\{0,1\}$-valued random variables and let $W=\sum_{a\in \cA} I_a$.
If the variables $I=(I_a ,\,a\in \cA)$ are exchangeable, then for any fixed $a\in \cA$ and coupling $(W,W_a)$ of $\mu$ and $\nu_a$, we have
\begin{align}\label{cor:Chen1}
\dTV(W,\Poi{\lambda})\le \E{|W-(W_a-1)|}.
\end{align}
If, moreover, $W_a=(J_{ab},\, b\in \cA)$ and there is a coupling $(W,W_a)$ of $\mu$ and $\nu_a$ satisfying $J_{ab}\le I_a$ for all $b\in \cA\setminus \{a\}$, then
\begin{align}\label{cor:Chen2}
\dTV(W,\Poi{\lambda})\le \E{I_a}+\sum_{b\in \cA\setminus \{a\}} \E{I_a-J_{ab}}.
\end{align}
\end{cor}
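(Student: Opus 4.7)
The plan is to deduce both bounds directly from Theorem \ref{thm:ChenStein} by leveraging exchangeability and the pointwise monotonicity of the coupling.

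For the first bound \eqref{cor:Chen1}, I would start from the Chen-Stein bound in Theorem \ref{thm:ChenStein}, namely
\[
\dTV(W,\Poi{\lambda})\le \min\{\lambda^{-1},1\} \sum_{a\in \cA} \E{I_a}\,\E{|W-(W_a-1)|}.
\]
Exchangeability of $I=(I_a,\,a\in \cA)$ makes $\E{I_a}$ constant in $a$, so $|\cA|\E{I_a}=\lambda$. Moreover, because the joint law of $I$ is invariant under permutations of $\cA$, conditional laws $\nu_a$ of $W$ given $I_a=1$ are all equal, and couplings of $\mu$ and $\nu_a$ can be taken with a common value of $\E{|W-(W_a-1)|}$, independent of $a$. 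Substituting gives
\[
\dTV(W,\Poi{\lambda})\le \min\{\lambda^{-1},1\}\cdot \lambda\cdot \E{|W-(W_a-1)|}=\min\{1,\lambda\}\,\E{|W-(W_a-1)|},
\]
which is at most $\E{|W-(W_a-1)|}$.

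For the refined bound \eqref{cor:Chen2}, I would start from \eqref{cor:Chen1} and carry out an explicit computation of $W-(W_a-1)$ under the monotonicity hypothesis. Writing $W=\sum_{b\in\cA}I_b$ and $W_a=\sum_{b\in\cA}J_{ab}$ with $J_{aa}=1$ (since the coupling is for $\nu_a$, which conditions on $I_a=1$), one gets
\[
W-(W_a-1)=I_a+\sum_{b\in\cA\setminus\{a\}}(I_b-J_{ab}).
\]
Under the hypothesis (which I read as $J_{ab}\le I_b$ for $b\neq a$, matching the standard negative-coupling form of Chen-Stein; under exchangeability this is also equivalent in expectation to the written form since $\E{I_a}=\E{I_b}$), each summand is non-negative, so the absolute value is unnecessary. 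Taking expectations term by term yields $\E{I_a}+\sum_{b\neq a}\E{I_b-J_{ab}}$, and exchangeability lets me replace $\E{I_b}$ by $\E{I_a}$ in each summand, producing exactly the right-hand side of \eqref{cor:Chen2}.

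Neither step is really an obstacle — this is essentially bookkeeping on top of Theorem \ref{thm:ChenStein}. The only subtle point is to be careful that the coupling in part two can be chosen consistently (one coupling for each $a$, with the same marginals on $W$) so that the exchangeability argument works uniformly, and to verify that the signed identity for $W-(W_a-1)$ really does give a non-negative quantity, so that no cancellation in the absolute value is lost.
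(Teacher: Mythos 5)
Your derivation is correct and is exactly the intended route: the paper states \refC{cor:ChenStein} without proof as an immediate consequence of \refT{thm:ChenStein}, and your two steps (using exchangeability to make $\E{I_a}$ and $\nu_a$ independent of $a$, so the sum collapses to $\min\{\lambda^{-1},1\}\lambda\,\E{|W-(W_a-1)|}\le \E{|W-(W_a-1)|}$; then writing $W-(W_a-1)=I_a+\sum_{b\neq a}(I_b-J_{ab})$ with $J_{aa}=1$ and dropping the absolute value under the monotone coupling) are the standard bookkeeping the author had in mind. You are also right that the hypothesis as printed, $J_{ab}\le I_a$, should be read as $J_{ab}\le I_b$ (the usual ``negatively related'' condition, consistent with the discussion in the concluding section), and under exchangeability the stated right-hand side $\E{I_a}+\sum_{b\neq a}\E{I_a-J_{ab}}$ agrees with $\E{I_a}+\sum_{b\neq a}\E{I_b-J_{ab}}$.
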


Now, for the remainder of the section, fix $m$ and let, for all $v\in [n]$, $I_v=\I{\rd_{\rT_n}(v)\ge m}$, so that $Z_{m}^\n\eqdist \sum_{v\in [n]} I_v$. Let $(I,J)=((I_v,\, v\in [n]),(J_v,\, v\in [n])$ be a coupling of $\mu$ and $\nu=\nu_n$ where  $\mu$ is the law of $(I_1,\ldots,I_n)$ and $\nu=\nu_n$ is the conditional law of $(I_1,\ldots,I_n)$ given that $I_n=1$. 

If we would have orthant negative correlation for $(\rd_{\rT_n}(v),\, v\in [n])$ then it would follow that for all $v\in [n-1]$, $\E{I_nI_v}-\E{I_n}\E{I_v}\le 0$ and so the conditions for \eqref{cor:Chen2} would be satisfied. Although such strong property has not been yet established, \refP{prop:asymp-neg} implies for each $v\in [n-1]$, 
\begin{gather*}
\E{I_nI_v}-\E{I_n}\E{I_v}\le O(2^{-2m-\alpha\log n}).
\end{gather*} 
This suggests that there are couplings of $\mu$ and $\nu$ for which, with high probability, $I_v\le J_v$ for all $v\in [n-1]$. The existence for such couplings is delicate as the inequality $I_v\le J_v$ has to hold for all $v\in [n-1]$ simultaneously. 

The next proposition is the key ingredient in applying the Chen-Stein method to prove \refT{thm:dTV}. The coupling is based on the Robin-Hood pruning and its proof is the content of \refS{sec:DegCrux}. 

\begin{prop}\label{prop:coup-neg}
Let $c\in (1,2)$. There is $\beta=\beta(c)> 0$ such that for any $m=m(n)>c\ln n$ there exists a coupling $(I,J)=((I_1,\ldots, I_n),(J_1,\ldots, J_n))$ of $\mu$ and $\nu$, in which for all $v\in [n-1]$, 
\[\p{I_v< J_v}\le O(n^{-1-\beta}).\] 
\end{prop}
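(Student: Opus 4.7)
The plan is to build the coupling of $\mu$ and $\nu$ via a joint construction based on the Robin-Hood pruning. Starting from a single uniformly random $(\rT_{n-1},\sigma_{n-1})\in\RD_{n-1}$, I would run the pruning with two coupled parameter triples: the first, $(K,L,X)$, is drawn from the standard $\RH_n$-distribution of \refD{dfn:RH-set} and produces $(\rT_n,\sigma_n)$ via \refT{thm:main}; the second, $(K',L',X')$, is drawn from the same distribution but conditioned on $\sum_{i=K'}^{n-1}X_i'\ge m$, and its output is denoted $(\rT_n^*,\sigma_n^*)$. Since this sum equals $\rd_{\rT_n^*}(n)$ and is independent of both $\sigma_{n-1}$ and $L$, the conditioning commutes with the pruning, so $(\rT_n^*,\sigma_n^*)$ realises the conditional law of $(\rT_n,\sigma_n)$ given $\rd_{\rT_n}(n)\ge m$. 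Setting $I_v=\I{\rd_{\rT_n}(v)\ge m}$ and $J_v=\I{\rd_{\rT_n^*}(v)\ge m}$ then gives the required coupling of $\mu$ and $\nu$.

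The first key step is to couple the two parameter triples monotonically, so that $K'\le K$ and $X_i\le X_i'$ for every $i\in[n-1]$. Because $\sum_{i\ge k}X_i$ is increasing in each $X_i$ and decreasing in $k$, conditioning on this sum being at least $m$ biases $K$ downward and each $X_i$ upward. Concretely, I would sample $K'$ from its conditional marginal (heavily biased towards small values when $m>c\ln n$, $c>1$), then sample $(X_i')_{i\ge K'}$ from a product of Bernoullis conditioned on their sum being at least $m$, using an FKG-type monotone coupling to the unconditional product, and finally extend to a full $(K,X)$ with the correct marginals. The attachment variables $L,L'$ are sampled independently from their respective uniform laws on $[K-1]$ and $[K'-1]$. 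Under this coupling, $\cV=\{v:\sigma_{n-1}(v)\ge K,\,X_{\sigma_{n-1}(v)}=1\}$ is contained in $\cV'$.

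With $\cV\subseteq \cV'$ in hand, the explicit description of the RH pruning yields, for every $v\in[n-1]$,
\begin{equation*}
\rd_{\rT_n^*}(v)\le \rd_{\rT_n}(v)+\I{v=\sigma_{n-1}^{-1}(L')},
\end{equation*}
because the only mechanism for $v$'s degree to grow from $\rT_n$ to $\rT_n^*$ is the attachment of $n$ to $v$, while the extra vertices in $\cV'\setminus\cV$ only remove children from their $\rT_{n-1}$-parents. Hence $J_v>I_v$ forces both $v=\sigma_{n-1}^{-1}(L')$ and $\rd_{\rT_n}(v)=m-1$. Since $\sigma_{n-1}$ is uniform and independent of $L'$, the former event has probability $1/(n-1)$, while the latter has probability at most $\E{Z_{m-1}^\n}/n=O(2^{-m})$ by exchangeability of labels in $\rT_n$ together with \eqref{eq:gamma}. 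A brief decomposition by the value of $L'$ shows the two events are asymptotically independent, yielding
\begin{equation*}
\p{I_v<J_v}=O(n^{-1}\cdot 2^{-m})=O(n^{-1-c\ln 2}),
\end{equation*}
so any $\beta<c\ln 2$ suffices.

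The main obstacle I expect is the monotone coupling of $(K,X)$ with $(K',X')$: the conditioning event $\sum_{i\ge K}X_i\ge m$ is not monotone with a single fixed direction on the joint product space $(K,X)$, so simultaneous stochastic ordering of $K$ and every $X_i$ is not an immediate corollary of FKG. A careful construction staged on $K'$, combined with the known monotone coupling for Bernoulli products conditioned on their sum, should nonetheless give the coupling. A secondary subtlety is replacing the ``asymptotic independence'' of $\{v=\sigma_{n-1}^{-1}(L')\}$ and $\{\rd_{\rT_n}(v)=m-1\}$ by a precise bound; this should follow by conditioning on $L'$ and averaging over the uniform permutation $\sigma_{n-1}$, using the exchangeability of vertex labels in $\rT_n$.
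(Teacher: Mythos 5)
Your construction of the coupling and your identification of the bad event follow the paper's own route: you condition the $\RH_n$-set on $\sum_{i\ge K}X_i\ge m$, build a monotone coupling staged on $K'$ (this is exactly what \refL{lem:StrassenX}--\refL{lem:coup1} do, via Harris' inequality and Strassen's theorem, taking $X'=X^{K'}$), and you deduce, as in \refP{prop:coup1}, that $I_v<J_v$ forces $\{L'=\sigma_{n-1}(v)\}$ together with $\rd_{\rT_n}(v)=m-1$ (equivalently $\rd_{\rT_{n-1}}(v)\ge m-1$). Up to this point the proposal is sound and coincides with the paper.

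The gap is in the final estimate. You bound $\p{L'=\sigma_{n-1}(v),\,\rd_{\rT_n}(v)=m-1}$ by multiplying $1/(n-1)$ with the unconditional bound $\E{Z^\n_{m-1}}/n=O(2^{-m})$, invoking ``asymptotic independence''. These two events are in fact positively correlated through the time stamp $\sigma_{n-1}(v)$: decomposing on $L'=j$ gives $\sum_j\p{L'=j}\,\tfrac{1}{n-1}\,\p{\rd_{\RRT_{n-1}}(j)\ge m-1}$, and the tail $\p{\rd_{\RRT_{n-1}}(j)\ge m-1}$ is far from uniform in $j$; for $j=O(1)$ it is of order $n^{-(1-c+c\ln c)+o(1)}$, which for $c\in(1,2)$ is much larger than $2^{-m}\approx n^{-c\ln 2}$. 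Moreover the conditioning that defines $(K',L',X')$ biases $K'$, hence $L'$, precisely towards such small values of $j$ (the conditional law of $K'$ is proportional to $\p{\sum_{i\ge k}X_i\ge m}$, which is decreasing in $k$). So $\E{Z^\n_{m-1}}/n$ is the wrong proxy for the conditional tail you need, averaging over the uniform $\sigma_{n-1}$ does not restore independence once you have conditioned on the value of $\sigma_{n-1}(v)$, and the claimed exponent $\beta<c\ln 2$ is not justified by this argument. The paper resolves exactly this point by using a bound that is uniform in $j$: $\p{\rd_{\RRT_{n-1}}(j)\ge m-1}\le\p{\sum_{k\le n}B_k>c\ln n}=O(n^{-\beta})$ by Bernstein's inequality, where $B_k\eqdist\Ber{1/k}$ are independent; this is where the hypothesis $c>1$ enters, with $\beta$ of order $(c-1)^2$ for $c$ near $1$. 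Summing $\sum_j\p{L'=j}\cdot\tfrac{1}{n-1}\cdot O(n^{-\beta})$ then gives $O(n^{-1-\beta})$. Replacing your independence step by this uniform-in-$j$ tail bound repairs the argument, at the cost of the smaller (but sufficient) exponent $\beta=\beta(c)$.
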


In the next section we assume \refP{prop:coup-neg} and complete the proofs of the results on high-degree vertices of $\RRT_n$. 

\subsection{Proofs for high-degree vertices}\label{sec:DegProofs}

\begin{proof}[Proof of \refT{thm:dTV}]
Fix $1<c'<c<2$ and let $c'\ln n<m=m(n)<c\ln n$. We apply the Chen-Stein method to $Z_m^\n\eqdist \sum_{v\in [n]} I_v$. 
First, we use the coupling $(I,J)=((I_1,\ldots, I_n),(J_1,\ldots, J_n))$ of $\mu$ and $\nu$ given in \refP{prop:coup-neg}. By \eqref{cor:Chen1}, we have 
\begin{align*}
\dTV \left(Z_m^\n,\Poi{\E{\lambda_{n,m}}}\right)
 &\le \E{|W-(W_n-1)|} \le \E{I_n}+\sum_{v\in [n-1]} \E{|I_v-J_v|}. 
\end{align*}
It thus remains to show that the terms in the bound above are $O(2^{-m+(1-\alpha)\log n})+O(n^{-\beta})$, where $\alpha=\alpha(c)\in (0,1)$ and $\beta=\beta(c')>0$ are defined as in Propositions \refand{prop:asymp-neg}{prop:coup-neg} respectively. For any $v\in [n-1]$,
\begin{align*}
\E{I_n}\E{|J_v-I_v|}
=&\E{I_n}\E{I_v-J_v}+2\E{I_n}\E{(J_v-I_v)\I{I_v<J_v}}\\
= & (\E{I_n}\E{I_v}-\E{I_nI_v}) +2\E{I_n}\p{I_v<J_v}.
\end{align*}
The terms in the last line are bounded by \eqref{eq:asymp-neg} and \refP{prop:coup-neg}, respectively. Since \eqref{eq:gamma} gives $\E{I_n}=2^{-m}(1+o(1))$
we get
\begin{align*}
\sum_{v\neq n} \E{|I_v-J_v|}
&=(n-1)\left[ \frac{\E{I_n}\E{I_v}-\E{I_nI_v}}{\E{I_n}}+ 2\p{I_v<J_v}\right]\\
&=O(2^{-m +(1-\alpha)\log n})+O(n^{-\beta}).
\end{align*}
Finally, \eqref{eq:gamma} together with $\alpha<1$ also gives $\E{I_n}=O(2^{-m+(1-\alpha)\log n})$.
\end{proof}

\begin{proof}[Proof of \refT{thm:normal}]

Fix $c\in (1,\log e)$ and let $\alpha=\alpha(c)$ be as in \refT{thm:dTV}. Using the upper bound for $\alpha$ in \refP{prop:asymp-neg} and simple computations yield $(1-\alpha)\log e<c$. Thus, we can chose $c'\in ((1-\alpha)\log e,c)$. 
Let $m=m(n)$ be such that $c'\ln n<m<c\ln n$. By the choice of $c$ and $c'$, we have that, as $n\to \infty$, $(1-\alpha)\log n-m<0$; while \eqref{eq:gamma} implies
\[\E{Z_m^\n}=2^{-m+\log n}(1+o(1)) \to \infty.\] 
The result then follows by \refT{thm:dTV} and the central limit theorem of Poisson variables, see e.g. \cite[Exercise 3.4.4]{Durret96}.  

\vspace{-.5cm}
\end{proof}

\begin{proof}[Proof of \refT{cor:Delta}]

Recall that $\eps_n=\log n-\floor{\log n}$. Let $i=i(n)$ satisfy $0<i< \log e \ln \ln n-C$, where $C>0$ is a constant to be determined below, and note that $2^{i+\eps_n}\le 2^{i+1} <2^{-C+1}\ln n$. Let $m=\floor{\log n}-i$ and $Z\eqdist \Poi{\lambda_{m,n}}$. 

We have that $\{\Delta_n<\floor{\log n}-i\}$ if and only if $\{Z_m^\n=0\}$. Therefore,  
\begin{align}\label{D}
\p{\Delta_n<\floor{\log n}-i} =\p{Z_m^\n=0} 
&\le \p{Z=0}+ \dTV(Z_m^\n,Z).
\end{align}
We deal with the two terms on the right-hand side of \eqref{D} separately. First,  using the lower bound on $i$, there is a constant $c\in (\log e,2)$ such that for $n$ large enough, $m-i<c\ln n$. Therefore, \eqref{eq:gamma} gives $\gamma>0$ such that 
$\lambda_{n,m}=2^{i+\eps_n} + o(n^{-\gamma} \ln n)$.
Consequently, 
\begin{align*}
\p{Z=0}=\exp\left\{-\lambda_{n,m}\right\}=\exp\{-2^{i+\eps_n}\}(1+o(1)).
\end{align*}
For the second term in \eqref{D}, \refT{thm:dTV} gives $\alpha, \beta>0$ such that 
\[\dTV(Z_m^\n,Z)= O(2^{-m+(1-\alpha)\log n})+O(n^{-\beta}).\]
It remains to deal with these two error terms. Note that $\exp\{2^{i+\eps_n}\}\le \exp\{2^{-C+1}\ln n\}$. Therefore, if $C>1+\log(1/\beta)$ then
\[\exp\{2^{i+\eps_n}\}O(n^{-\beta})=O(\exp\{(2^{-C+1}-\beta)\ln n\})\to 0;\] 
similarly, for $C$ large enough, 
\[\exp\{2^{i+\eps_n}\}O(2^{-m+(1-\alpha) \log n})=\exp\{2^{i+\eps_n}\}O(2^{i-\alpha \log n})\to 0.\] 
The two limits above imply that $\dTV(Z_m^\n,Z)=o(\exp\{-2^{i+\eps_n}\})$, completing the proof. 

\vspace{-.5cm}
\end{proof}

\section{The coupling for the Chen-Stein Method}\label{sec:DegCrux}

In this section we define and analyze the auxiliary coupling used in \refP{prop:coup-neg}. The coupling is based on the following straightforward property of the deterministic pruning. 
\begin{fact}\label{fact:pruning}
Fix $n\ge 2$. For $\rh_n((T,\sigma),(k,l,x))=(T',\sigma')$, we have
$\rd_{T'}(n) = \sum_{i=k}^{n-1} x_i$,
and for $v\in [n-1]$,
\begin{align*}
\rd_{T'}(v)& = \rd_{T}(v)+\I{l=\sigma(v)} -\sum_{i=k}^{n-1} x_i\I{v=p_T(\sigma^{-1}(i))}.
\end{align*}
\end{fact}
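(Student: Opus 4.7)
The approach is a direct bookkeeping from the definition of the deterministic pruning $\rh_n$: for each vertex $w$ of $T'$, I count the edges of $T'$ with head $w$ and compare to the corresponding count in $T$. I would treat the cases $w=n$ and $w\in [n-1]$ separately, reading off the explicit description of $E(T')$ given in the definition of $\rh_n$.

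For $w=n$, the plan is to observe that $n\notin V(T)$, so every edge of $T'$ incident to $n$ was introduced by the pruning. The construction contributes the edges in $\{vn: v\in \cV\}$ (which all have head $n$) and, when $k>1$, the extra edge $n\sigma^{-1}(l)$ (whose head is $\sigma^{-1}(l)\neq n$, so it contributes nothing to $\rd_{T'}(n)$). Hence $\rd_{T'}(n)=|\cV|$, and since $\sigma$ is a bijection from $[n-1]$ to $[n-1]$, the change of variables $i=\sigma(v)$ yields $|\cV|=\#\{i\in [n-1]: i\ge k,\ x_i=1\}=\sum_{i=k}^{n-1} x_i$.

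For $w=v\in [n-1]$, I would compare the in-edges of $v$ in $T'$ with those in $T$. The edges of $T$ that are deleted are exactly $\{up_T(u): u\in \cV\setminus\{r(T)\}\}$; those with head $v$ are counted by $\sum_{u\in \cV}\I{p_T(u)=v}$ (including $u=r(T)$ is harmless since $r(T)$ has no parent). Among the edges added in the construction, $\{un: u\in \cV\}$ all have head $n\neq v$ and contribute nothing, while the extra edge $n\sigma^{-1}(l)$ (present iff $k>1$) has head $\sigma^{-1}(l)$ and thus contributes $+1$ to $\rd_{T'}(v)$ precisely when $v=\sigma^{-1}(l)$, equivalently $\sigma(v)=l$. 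The case $k=1$ is absorbed automatically, since then $l=0<\sigma(v)$ forces $\I{l=\sigma(v)}=0$. Finally, substituting $u=\sigma^{-1}(i)$ using the definition of $\cV$ rewrites the deletion count as $\sum_{i=k}^{n-1} x_i\,\I{v=p_T(\sigma^{-1}(i))}$, completing the second identity.

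The only mild subtlety is matching the boundary conventions: the $k=1$ case (handled by the indicator $\I{l=\sigma(v)}$ vanishing automatically) and the asymmetric role of $r(T)\in\cV$ (which contributes a new edge incident to $n$ but has no parent edge to delete). Both resolve consistently once the indicators are set up correctly. Beyond that, this is a routine definitional unraveling and I anticipate no real obstacle.
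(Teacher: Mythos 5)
Your proof is correct: the case analysis on $w=n$ versus $w\in[n-1]$, the change of variables $i=\sigma(v)$, and the boundary conventions for $k=1$ and $r(T)\in\cV$ all check out against the definition of $\rh_n$. This is exactly the routine definitional bookkeeping the paper has in mind — it states Fact~\ref{fact:pruning} without proof as a ``straightforward property'' of the pruning — so there is nothing further to reconcile.
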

In words, Fact~\ref{fact:pruning} specifies when the degree of a vertex $v<n$ changes: either for having $n$ as a new child or for losing children that are rewired towards $n$. Clearly, the degree of $n$ equals the total number of such rewirings. 

The heuristic for the \emph{almost} negative relation obtained in \refP{prop:coup-neg} is the following. Start with $(\rT_{n-1},\sigma_{n-1})$ and apply the Robin-Hood procedure. If the degree of vertex $n$ is large, Fact~\ref{fact:pruning} implies that a large number of vertices in $\rT_{n-1}$ were rewired towards $n$ in the new tree; thus, many (parent) vertices decreased their degree by at least one. In short, conditioning on $\deg_{\rT_n}(n)\ge m$ implies that other vertices are (slightly) less likely to satisfy $\deg_{\rT_n}(v)\ge m$. 

For the remainder of the section, fix $n\in \N$, $c\in (1,2)$ and $m=m(n)>c\ln n$. Let $(\rT_{n-1}, \sigma_{n-1})$ be uniformly random in $\RD_{n-1}$, $(K,L,X)$ be an $\RH_{n}$-set, and $(K',L',X')$ be distributed as an $\RH_{n}$-set conditioned to satisfy $\sum_{i=K}^{n-1} X'_i\ge m$. Now, write
\begin{align}
(\rT_n,\sigma_n)&=\rh((\rT_{n-1},\sigma_{n-1}),(K,L,X)), \label{T}\\
(\bm{\rT},\bm{\sigma})&=\rh((\rT_{n-1},\sigma_{n-1}),(K',L',X')). \label{Tm}
\end{align}
To avoid cluttery notation, we omit the dependency on $m$ of the conditional random variables $(K',L',X')$ and $(\bm{\rT},\bm{\sigma})$. 
By Fact~\ref{fact:pruning} and \refT{thm:main}, $(\bm{\rT},\bm{\sigma})$ is a conditional version of  $(\rT_{n},\sigma_{n})$ given that $\rd_{\rT_{n}}(n)\ge m$. 
Consequently, if $I_v=\I{\rd_{\rT_n}(v)\ge m}$ and $J_v=\I{\rd_{\bm{\rT}}(v)\ge m}$ for all $v\in [n]$, then any coupling between $(K,L,X)$ and $(K',L',X')$ yields a coupling for the measures $\mu$ and $\nu$ in \refP{prop:coup-neg}. 

Our goal is then to couple $(K,L,X)$ and $(K',L',X')$ in such a way that the negative relation between $I_v$ and $J_v$ fails on a negligible set. 
More precisely, we construct a coupling so that there is $\beta=\beta(c)>0$ satisfying 
\begin{align}\label{coupling}
\p{I_v<J_v}=\p{\rd_{\rT_{n-1}}(v)< m\le \rd_{\bm{\rT}}(v)}=O(n^{-1-\beta}). 
\end{align}
 
Lemmas~\ref{lem:StrassenX}--\ref{lem:coup1} provide the coupling between $(K,L,X)$ and $(K',L',X')$, while \refP{prop:coup1} gives necessary conditions, under the coupling, for $I_v<J_v$ to hold. The proof of \refP{prop:coup-neg} then follows from bounding the probability that such necessary conditions occur.

\subsection{Construction of the coupling}

For any integer $n-m\le k<n$, let $X^k=(X_i^k,\, i\in [n-1])$ be a conditional version of $X$ given that $\sum_{i=k}^{n-1} X_i\ge m$. The following observation is quite standard but we include a proof for completeness. For $a=(a_1,\ldots, a_{d})$ and $b=(b_1, \ldots, b_{d})\in \{0,1\}^{d}$, $a\le b$ only if $a_i\le b_i$ for all $i\in [d]$. We say that $S\subset \{0,1\}^d$ is monotone if $a\le b$ and $a\in S$ imply $b\in S$.

\begin{lem}\label{lem:StrassenX}
For each $k<n$, there exists a coupling of $X^k$ and $X$ such that $X_i\le X^k_i$ for all $i\in [n-1]$. 
\end{lem}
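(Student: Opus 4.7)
My plan is to prove Lemma~\ref{lem:StrassenX} as a standard application of Strassen's theorem, reducing it to the stochastic domination $X \leq_{st} X^k$, which in turn follows from the Harris--FKG positive correlation inequality on the product Bernoulli space.

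First, I would note that the conditioning event
\[
A_k = \Bigl\{\sum_{i=k}^{n-1} X_i \ge m\Bigr\}
\]
is an increasing (monotone) subset of $\{0,1\}^{n-1}$: adding more $1$'s to a configuration in $A_k$ keeps it in $A_k$. Provided $\P(A_k) > 0$ (which holds since $m \le n-1$ and every coordinate in the sum has a positive, though small, chance of being $1$, so in particular the all-ones configuration lies in $A_k$), the conditional law $X^k$ is well-defined.

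Next, I would verify stochastic domination $X \le_{st} X^k$. Fix any monotone set $S \subseteq \{0,1\}^{n-1}$. Since $X$ is a product of independent Bernoullis and both $A_k$ and $S$ are increasing events, the Harris--FKG inequality yields
\[
\P(X \in S \cap A_k) \;\ge\; \P(X \in S)\,\P(X \in A_k).
\]
Dividing by $\P(A_k)$ gives $\P(X^k \in S) \ge \P(X \in S)$ for every monotone $S$, which is exactly stochastic domination on the partially ordered set $\{0,1\}^{n-1}$.

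Finally, I would invoke Strassen's theorem (in its classical form for finite partially ordered sets): whenever a probability measure $\nu$ stochastically dominates $\mu$ on a finite poset, there is a coupling $(X,Y) \sim (\mu,\nu)$ with $X \le Y$ almost surely. Applying this to the laws of $X$ and $X^k$ produces the desired coupling with $X_i \le X_i^k$ for all $i \in [n-1]$.

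There is no real obstacle here — the only subtlety worth mentioning is confirming that $\P(A_k) > 0$ so the conditional distribution makes sense, and making sure the ordering is understood coordinatewise on $\{0,1\}^{n-1}$ (which is why the definition of ``monotone'' given just before the lemma is exactly what is needed for FKG and Strassen to apply verbatim).
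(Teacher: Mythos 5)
Your argument is correct and follows essentially the same route as the paper's proof: identify the conditioning event as a monotone subset of $\{0,1\}^{n-1}$, apply Harris' inequality to the product Bernoulli measure to get stochastic domination of $X$ by $X^k$, and conclude via Strassen's theorem. The extra remark on $\p{A_k}>0$ is a harmless (and reasonable) addition not spelled out in the paper.
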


\begin{proof}
Fix $k<n$. Note that $S_k=\{a \in \{0,1\}^{n-1}: a_k+\ldots +a_{n-1}\ge m\}$
is a monotone subset of $\{0,1\}^{n-1}$. Harris inequality implies
$\p{X\in S\cap S_k}\ge \p{X\in S_k}\p{X\in S}$, for any monotone subset $S\in \{0,1\}^{n-1}$.
Dividing through by $\p{X\in S_k}$ yields $\p{X^k \in S}\ge \p{X\in S}$. Therefore, $X^k$ stochastically dominates $X$. The existence of the coupling is then guaranteed by Strassen's theorem \cite{StrassenExp}.
\end{proof}

Before the next coupling, we gather two observations. First, for fixed $(k,l)$, we have $\p{L=l|K=k}=\p{L'=l|K'=k}$. To see this, observe that $\p{L'=l|K'=k}$ can be rewritten as 
\begin{align*}
\frac{\p{L=l,\,K=k,\, \sum_{i=K}^{n-1}X_i\ge m}}{\p{K=k,\, \sum_{i=K}^{n-1}X_i\ge m}}
=\frac{\p{L=l,\,K=k,\, \sum_{i=k}^{n-1}X_i\ge m}}{\p{K=k,\, \sum_{i=k}^{n-1}X_i\ge m}};
\end{align*}
the claim then follows by the independence between $X$ and $(K,L)$.
Second, the sequence 
$p_k=\p{K=k\,|\, \sum_{i=K}^{n-1} X_i\ge m}$ is proportional to $\p{\sum_{i=k}^{n-1} X_i\ge m}$ and, thus, it is decreasing in $k$. Clearly, the latter sequence of probabilities is decreasing in $k$, while both are proportional with a factor
$Z=n\p{\sum_{i=K}^{n-1}X_i\ge m}$. To see this, use the independence between $X$ and $K$ to obtain
\begin{align*}
\p{K=k\,\middle |\, \sum_{i=K}^{n-1} X_i\ge m}
&=\frac{\p{K=k,\sum_{i=k}^{n-1}X_i\ge m}}{\p{\sum_{i=K}^{n-1}X_i\ge m}}
=\frac{1}{Z}\,\p{\sum_{i=k}^{n-1}X_i\ge m}.
\end{align*}

\begin{lem}\label{lem:StrassenM}
There exists a coupling of $(K,L)$ and $(K',L')$ such that $K'\le K$ and $L'\le L$. 
\end{lem}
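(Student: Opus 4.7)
The plan is to build the coupling in two stages. First I couple the marginals $K'$ and $K$ with $K' \le K$ almost surely; then, conditionally on $(K',K)$, I couple $L'$ and $L$ with $L' \le L$.

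For the first step, I use the second observation recorded just before the lemma: the probabilities $p_k = \p{K'=k}$ are decreasing in $k$. Since $K \sim \Unif{1,\ldots,n}$, the average of the $p_k$ equals $1/n$; being decreasing, the partial sums satisfy
\[p_1+\cdots+p_k \ge k/n = \p{K\le k},\]
so $K'$ is stochastically dominated by $K$. By Strassen's theorem (or explicitly via inverse-CDF coupling with a common $\Unif{[0,1]}$ seed), there is a coupling with $K' \le K$ almost surely.

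For the second step, fix the outcome $(K',K)=(k',k)$ with $k' \le k$ and use the first observation: $\p{L=\cdot \mid K=k} = \p{L'=\cdot \mid K'=k}$ for every $k$, i.e.\ $L$ is uniform on $[k-1]$ (or $0$ when $k=1$) and similarly for $L'$. If $k'=1$, set $L'=0$, and then draw $L$ from its conditional law; clearly $L'=0 \le L$. If $k'>1$ (so in particular $k>1$), couple $L'$ uniform on $[k'-1]$ with $L$ uniform on $[k-1]$ by the standard monotone coupling: draw $L'\sim \Unif{[k'-1]}$, and independently with probability $(k'-1)/(k-1)$ set $L=L'$, otherwise draw $L\sim \Unif{\{k',\ldots,k-1\}}$. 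A direct check confirms $L$ is uniform on $[k-1]$ and $L' \le L$ always.

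Combining both steps gives a joint law of $((K,L),(K',L'))$ with the required marginals for which $K'\le K$ and $L'\le L$ hold almost surely. No single step is really hard; the only thing that needs care is verifying that the marginal distribution of $(K',L')$ produced this way agrees with the conditional distribution defined via $(K',L',X')$, which is precisely the content of the two preparatory observations (the conditional law of $L$ given $K$ is unaffected by the conditioning on $X$, and the conditional marginal of $K'$ is proportional to the decreasing sequence $\p{\sum_{i=k}^{n-1}X_i\ge m}$).
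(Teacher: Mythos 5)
Your proof is correct and follows essentially the same route as the paper: first couple $K'\le K$ via the monotone (inverse-CDF) coupling made possible by the decreasing sequence $p_k$ (whose partial sums therefore dominate $k/n$), then couple $L'\le L$ conditionally on $(K',K)$ using that $L\mid K=k$ and $L'\mid K'=k$ share the same uniform law. The only difference is cosmetic: the paper realizes the second stage with a shared uniform seed, $L=\lceil (K-1)U_2\rceil$ and $L'=\lceil (K'-1)U_2\rceil$, whereas you use an explicit stay-or-jump mixture coupling of $\Unif{[k'-1]}$ and $\Unif{[k-1]}$; both are standard monotone couplings and yield the same conclusion.
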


\begin{proof}
Let $X=(X_1,\ldots, X_{n-1})$ be independent with $X_i\eqdist \Ber{1/i}$ and independently, let $U_1,U_2$ be i.i.d. $\Unif{0,1}$. By a slight abuse of notation we \emph{redefine} the variables $(K,L)$ and $(K',L')$ using the variables $U_1,U_2$ and argue that the \emph{original} law is preserved. 

Let $(K,L)=(\ceil{nU_1},\ceil{(K-1)U_2})$ and $(K',L')=(K',\ceil{(K'-1)U_2})$ with \[K'=\max\left\{k:\, U_1 > \sum_{j=1}^{k-1} p_j\right\}.\]
It is straightforward that $(K,L)$ and $K'$ have the correct law by construction, while $L'$ has the correct law since $\p{L=l|K=k}=\p{L'=l|K'=k}$ for each $0\le l<k\le n$. 
Moreover, since $p_k$ is decreasing, it follows that $K'=j$ implies
$U_1> \sum_{i=1}^{j-1} p_i\ge \frac{j-1}{n}$. 
It follows that $K\ge j=K'$, and so $L=\ceil{(K-1)U_2} \ge \ceil{(K'-1)U_2} =L'$. \end{proof}

\begin{lem}\label{lem:coup1}
There exists a coupling of $(K,L,X)$ and $(K',L',X')$ such that $K'\le K$, $L'\le L'$ and $X_i\le X'_i$ for all $i\in [n-1]$.
\end{lem}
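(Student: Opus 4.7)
The plan is to concatenate the two couplings already built in Lemmas~\ref{lem:StrassenX} and~\ref{lem:StrassenM}, exploiting the fact that in the definition of an $\RH_n$-set, $X$ is independent of $(K,L)$.

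First I would apply Lemma~\ref{lem:StrassenM} to obtain, on a common probability space, variables $(K,L)$ and $(K',L')$ with the correct marginal laws satisfying $K'\le K$ and $L'\le L$. Independently of this, I sample $X=(X_1,\ldots,X_{n-1})$ with independent coordinates $X_i\sim \Ber{1/i}$, so that $(K,L,X)$ is an $\RH_n$-set. Finally, conditionally on $K'=k'$ (and on the realized value of $X$), I invoke Lemma~\ref{lem:StrassenX} for parameter $k=k'$ to produce $X'$ with $X_i\le X'_i$ for all $i$ and with marginal law equal to that of $X^{k'}$. The three monotonicity relations $K'\le K$, $L'\le L$, and $X\le X'$ then hold by construction.

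What remains is to verify that $(K',L',X')$ has the law of an $\RH_n$-set conditioned on $\sum_{i=K}^{n-1}X_i\ge m$. By Lemma~\ref{lem:StrassenM}, $(K',L')$ already has the correct marginal. Moreover, as noted in the paragraph preceding Lemma~\ref{lem:StrassenM}, $\p{L'=l'\mid K'=k'}=\p{L=l'\mid K=k'}$; combined with the independence of $X$ from $(K,L,K',L')$, a short Bayes computation shows that conditioning on $(K',L')=(k',l')$ induces the same conditional law on $X'$ as conditioning on $K'=k'$ alone. Since Lemma~\ref{lem:StrassenX} was set up precisely so that, given $K'=k'$, the coupled variable $X'$ has the law of $X$ conditioned on $\sum_{i=k'}^{n-1}X_i\ge m$, the joint distribution of $(K',L',X')$ matches the required conditional distribution.

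I do not expect any substantive obstacle: the lemma is essentially an assembly of Lemmas~\ref{lem:StrassenX} and~\ref{lem:StrassenM}. The only subtle point is bookkeeping the independence structure when defining $X'$ as a measurable function of $X$ and $K'$, so that the two-step construction indeed recovers the target joint law.
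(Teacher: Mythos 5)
Your construction is correct and is essentially the paper's own proof: the paper likewise assembles Lemmas~\ref{lem:StrassenX} and~\ref{lem:StrassenM}, setting $X'=X^{K'}$ with the coupled vectors $X^k$ taken independent of $(K',L')$, and verifies the target joint law through the same factorization that uses the independence of $X$ and $(K,L)$. The "bookkeeping" point you flag is exactly the one the paper handles by insisting the $X^k$ (equivalently, your kernel randomness) are independent of $(K',L')$, so there is no gap.
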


\begin{proof}
Let $U_1,U_2$ be i.i.d. $\Unif{0,1}$ and independently, let $X=(X_1,\ldots, X_{n-1})$ be independent with $X_i\eqdist \Ber{1/i}$. For each $1\le k<n$ fix a vector $X^k$ coupled with $X$ according to \refL{lem:StrassenX}. 
The dependence structure of $X^1, \ldots, X^{n-1}$ is unimportant to the argument, but for concreteness we may, e.g., take them to be conditionally independent given $X$. On the other hand, it is important to insist that the $X^k$ are independent of $(K',L')$. Since we will define $(K',L')$ using $U_1,U_2$, the existence of such joint coupling is straightforward.

Again, by a slight abuse of notation we \emph{redefine} the variables and argue that the \emph{original} law is preserved. Define $(K,L)$,$(K',L')$ as in \refL{lem:StrassenM} and let $X'=X^{K'}$. Clearly, $(K,L,X)$ is an $\RH_n$-set. It remains to show that $(K',L',X')$ has the conditional distribution of $(K,L,X)$ given that $\sum_{i=K}^{n-1}X_i\ge m$.
For any $(k,l,x)\in \HP_{n}$, the probability $\p{(K,L,X)=(k,l,x)\,\middle |\, \sum_{i=K}^{n-1} x_i\ge m}$ can be rewritten as
\begin{align*}
\frac{\p{K=k,\,L=l,\,X=x,\, \sum_{i=k}^{n-1} x_i\ge m}}{\p{\sum_{i=K}^{n-1} x_i\ge m}}
&=\frac{\p{K=k,\,L=l}\p{X=x,\,\sum_{i=k}^{n-1} x_i\ge m}}{\p{\sum_{i=K}^{n-1} x_i\ge m}}.
\end{align*}
Adding two factors of $\p{\sum_{i=k}^{n-1} x_i\ge m}$ and using the independence between $(K,L)$ and $X$, we can factorize these probabilities as
\begin{align*}
\frac{\p{K=k,\,L=l,\,\sum_{i=k}^{n-1} x_i\ge m}}{\p{\sum_{i=K}^{n-1} x_i\ge m}}
\cdot
\frac{\p{X=x,\,\sum_{i=k}^{n-1} x_i\ge m}}{\p{\sum_{i=k}^{n-1} x_i\ge m}}
\end{align*}
These probabilities correspond, respectively, to the distributions of $(K',L')$ and $X^k$, which are independent. Therefore,
\begin{align*}
\p{(K,L,X)=(k,l,x)\,\middle |\, \sum_{i=K}^{n-1} x_i\ge m}
&=\p{(K',L')=(k,l)}\p{X^k=x}\\
&=\p{(K',L',X')=(k,l,x)}
\end{align*}
as desired. Finally, the variables $(K,L,X)$ and $(K',L',X')$ satisfy the desired inequalities by Lemmas \refand{lem:StrassenX}{lem:StrassenM}.
\end{proof}

\subsection{Analysis of the coupling}
The proof of \refP{prop:coup-neg} boils down to understanding necessary conditions for $\rd_{\rT_n}(v)< m\le \rd_{\bm{\rT}}(v)$ to hold under the coupling of \refL{lem:coup1}.

\begin{prop}\label{prop:coup1}
Consider $(K,L,X)$ and $(K',L',X')$ defined in \refL{lem:coup1} and their corresponding decorated trees $(\rT_n,\sigma_n),(\bm{\rT},\bm{\sigma})$ defined in \eqref{T} and \eqref{Tm}. For any $v\in [n-1]$, 
\[\{\rd_{\rT_n}(v)< m\le \rd_{\bm{\rT}}(v)\}\subset \{L'=\sigma_{n-1}(v)\}\cap \{\rd_{\rT_{n-1}}(v)\ge m-1\}.\] 
\end{prop}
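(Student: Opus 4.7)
The plan is to unpack both degrees through Fact~\ref{fact:pruning} and use the pointwise monotonicity built into the coupling of \refL{lem:coup1}.

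First I would write, using Fact~\ref{fact:pruning} applied to the two prunings \eqref{T} and \eqref{Tm},
\begin{align*}
\rd_{\rT_n}(v) &= \rd_{\rT_{n-1}}(v) + \I{L=\sigma_{n-1}(v)} - \sum_{i=K}^{n-1} X_i \I{v=p_{\rT_{n-1}}(\sigma_{n-1}^{-1}(i))},\\
\rd_{\bm{\rT}}(v) &= \rd_{\rT_{n-1}}(v) + \I{L'=\sigma_{n-1}(v)} - \sum_{i=K'}^{n-1} X'_i \I{v=p_{\rT_{n-1}}(\sigma_{n-1}^{-1}(i))}.
\end{align*}
Under the coupling of \refL{lem:coup1} we have $K' \le K$ and $X_i \le X'_i$ for every $i \in [n-1]$; consequently the subtracted sum in the second line term-wise dominates the one in the first. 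Thus the difference satisfies
\[
\rd_{\bm{\rT}}(v) - \rd_{\rT_n}(v) \;\le\; \I{L'=\sigma_{n-1}(v)} - \I{L=\sigma_{n-1}(v)} \;\le\; \I{L'=\sigma_{n-1}(v)}.
\]

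Now I would assume the event $\{\rd_{\rT_n}(v) < m \le \rd_{\bm{\rT}}(v)\}$ holds. Then $\rd_{\bm{\rT}}(v) - \rd_{\rT_n}(v) \ge 1$, so the inequality above forces $\I{L'=\sigma_{n-1}(v)}=1$, giving the first conjunct $L' = \sigma_{n-1}(v)$. For the second conjunct I would drop the (non-negative) rewiring sum in the expression for $\rd_{\bm{\rT}}(v)$ to obtain $\rd_{\bm{\rT}}(v) \le \rd_{\rT_{n-1}}(v) + \I{L'=\sigma_{n-1}(v)} \le \rd_{\rT_{n-1}}(v) + 1$; combined with $\rd_{\bm{\rT}}(v) \ge m$ this yields $\rd_{\rT_{n-1}}(v) \ge m-1$.

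There is no real obstacle here: the whole statement is a deterministic consequence of Fact~\ref{fact:pruning} and the two pointwise comparisons $K' \le K$ and $X \le X'$ baked into \refL{lem:coup1} (the third comparison $L' \le L$ is not needed for this inclusion). The only step worth being careful about is the sign bookkeeping in the rewiring term — verifying that increasing $X$ and lowering $K$ both can only \emph{increase} the count of children of $v$ that get stolen, and hence can only \emph{decrease} $\rd_{\bm{\rT}}(v)$ relative to $\rd_{\rT_n}(v)$. Once that is in hand, the containment is immediate.
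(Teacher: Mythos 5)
Your argument is correct and matches the paper's own proof essentially line for line: both decompose the degrees via Fact~\ref{fact:pruning}, use $K'\le K$ and $X\le X'$ to bound $\rd_{\bm{\rT}}(v)-\rd_{\rT_n}(v)\le \I{L'=\sigma_{n-1}(v)}$, and then drop the rewiring sum to get $\rd_{\rT_{n-1}}(v)\ge m-1$. Your observation that the comparison $L'\le L$ is not needed is also consistent with the paper, which never invokes it here.
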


\begin{proof}
From the properties of the coupling in \refL{lem:coup1},
\begin{align}\label{Xs}
\sum_{i=K}^{n-1} X_i\,\I{v=p_{\rT_{n-1}}(\sigma_{n-1}^{-1}(i))}
\le \sum_{i=K'}^{n-1} X'_i\,\I{v=p_{\rT_{n-1}}(\sigma_{n-1}^{-1}(i))}.
\end{align}
Consequently, using Fact \ref{fact:pruning} we have that  
$\rd_{\bm{\rT}}(v)-\rd_{\rT_n}(v)\le \I{L'=\sigma_{n-1}(v)}$.
On the other hand, if $\{\rd_{\rT_n}(v)< m\le \rd_{\bm{\rT}}(v)\}$ holds, then it follows that $\rd_{\bm{\rT}}(v)-\rd_{\rT_n}(v)>0$ and so it is necessary that $\{L'=\sigma_{n-1}(v)\}$ holds. 
Finally, $\{m\le \rd_{\bm{\rT}}(v)\}$ implies that 
\[m\le \rd_{\bm{\rT}}(v)= \rd_{\rT_{n-1}}(v) +\I{L'=\sigma_{n-1}(v)}- \sum_{i=K'}^{n-1} X'_i\,\I{v=p_{\rT_{n-1}}(\sigma_{n-1}^{-1}(i))}
\le \rd_{\rT_{n-1}}(v) +1;\]
or equivalently, that  $\{\rd_{\rT_{n-1}}(v)\ge m-1\}$.
\end{proof}

We can also argue, more specifically, that 
\[\{\rd_{\rT_n}(v)< m\le \rd_{\bm{\rT}}(v)\}\subset \{L'=\sigma_{n-1}(v)\}\cap \{\rd_{\rT_{n-1}}(v)= m-1\};\]
 however, the approach we chose allow us to use uniform bounds for all $v\in [n-1]$. We will frame the events $\{\rd_{\rT_{n-1}}(v)\ge m-1\}$ from the perspective of recursive trees where the degree distributions are distinct for each vertex.  Recall the following version of Bernstein inequalities (see, e.g. \cite{JLR} Theorem~2.8, (2.5)). For a sum $S$ of $\{0,1\}$-valued variables and $\eps>0$, 
$\p{S>(1+\eps)\E{S}}\le \exp\left\{-\frac{3\eps^2}{2(3+\eps)}\E{S}\right\}$.
By the construction of $\RRT_n$ we have that 
$\rd_{\RRT_n}(i)\eqdist \sum_{k=i}^n B_k \le \sum_{k=1}^n B_k$ where $(B_k,\, k\ge 1)$ are independent Bernoulli variables with mean $1/k$. Therefore, 
\begin{align*}
\p{\rd_{\RRT_{n}}(i)> m}\le \p{\sum_{k=1}^{n} B_k> c\ln n}.
\end{align*}
Using that $\E{\sum_{k=1}^{n} B_k}=\ln n+O(1)<c\ln n$, we can apply Berstein's inequality with $\eps=c-1+o(1)$ and set $\beta=\frac{3\eps^2}{2(3+\eps)}$. 
It follows that there is $\beta=\beta(c)>0$ such that uniformly over $m>c\ln n$, and $i\in [n]$,  
\begin{equation}\label{eq:tailRRT}
\p{\rd_{\RRT_n}(i)>m}=O(n^{-\beta}).
\end{equation}

\begin{proof}[Proof of \refP{prop:coup-neg}]
Fix $c\in (1,2)$. Let $m=m(n)>c\ln n$ and $\beta=\beta(c)>0$ be as in \eqref{eq:tailRRT}. Let $\rT_n$ and $\bm{\rT}$ be as defined in \eqref{T} and \eqref{Tm} with $((K,L,X),(K',L',X'))$ as in \refL{lem:coup1}. Set $I_v=\I{\rd_{\rT_n}(v)\ge m}$ and $J_v=\I{\rd_{\bm{\rT}}(v)\ge m}$ for all $v\in [n]$, so that $(I,J)=((I_1,\ldots, I_n),(J_1,\ldots J_n))$ is a coupling of the measures $\mu$ and $\nu$. 

Our goal is to bound $\p{I_v<J_v}=\p{\rd_{\rT_n}(v)< m\le \rd_{\bm{\rT}}(v)}$. First, by \refP{prop:coup1}, 
\begin{align*}
\p{\rd_{\rT_n}(v)< m\le \rd_{\bm{\rT}}(v)}
&\le \sum_{j=1}^{n-1} \p{L'=j,\, \sigma_{n-1}(v)=j,\, \rd_{\rT_{n-1}}(v)\ge m-1}.
\end{align*}
Next we obtain uniform bounds for the terms on the right-hand side. Recall that $\sigma_{n-1}$ is a uniformly random permutation independent of $L'$ and that $\sigma_{n-1}(\rT_{n-1})\eqdist \RRT_{n-1}$. These facts, together with \eqref{eq:tailRRT} gives, for each $j\in [n-1]$,
\begin{align*}
\p{L'=j,\, \sigma_{n-1}(v)=j,\, \rd_{\rT_{n-1}}(v)\ge m-1}
&= \frac{\p{L'=j}}{n-1} \p{\rd_{\rT_{n_1}}(v)\ge m-1\,|\,\sigma_{n-1}(v)=j}\\
&= \frac{\p{L'=j}}{n-1} \p{\rd_{\RRT_{n-1}}(j)\ge m-1}\\
& \le \p{L'=j} O(n^{-1-\beta}).
\end{align*}
Plugging together these bounds, we get for any $v\in [n-1]$,  
\begin{align*}
\p{I_v<J_v}
&\le \sum_{j=1}^{n-1} \p{L'=j,\, \sigma_{n-1}(v)=j,\, \rd_{\rT_{n-1}}(v)\ge m-1}\\
&= O(n^{-1-\beta}) \sum_{j=1}^{n-1} \p{L'=j}=O(n^{-1-\beta}).
\end{align*}

\vspace{-.5cm}
\end{proof}

\section{Conclusions and further research} \label{sec:Conclusions}

The Robin-Hood pruning yields an interesting process $((\rT_n, {\sigma}_n),\, n\ge 1)$. By \refT{thm:main} and \refP{prop:RD-R}, $\sigma_n(\rT_n)\eqdist \RRT_n$ for all $n\ge 1$; that is, $\rT_n$ has the shape of a recursive tree. The novelty of this process is that the Robin-Hood pruning is a fairly complex dynamic of trees which has potential connections to mathematical models of social and economic networks and raises challenging theoretical questions. 
 
First, only asymptotically about half the time $\rT_n$ is obtained from $\rT_{n-1}$ by simply attaching $n$ to a uniformly random vertex. To see this,  recall that $\rd_{\rT_n}(n)\eqdist \min\{\Geo{1/2},|\cS|\}$ where $|\cS|\to \infty$ (see Fact~\ref{deg id} and \refL{S id}). It follows that with probability tending to $1/2$ the newly added vertex will be a leaf. Second, for all $n\ge 1$, $\rd_{\RRT_n}(n)=0$ a.s.~, while 
Fact \ref{fact:pruning} and the distribution of the $\RH$-set yield
\begin{align*}
\E{\rd_{\rT_n}(n)}=\E{\E{\textstyle{\sum_{i=k}^{n-1} X_i}\,|\, M=k}}
&=\sum_{k=1}^n\sum_{i=k}^{n-1} \frac{1}{n\cdot i}
=\sum_{i=1}^{n-1}\sum_{k=1}^{i} \frac{1}{n\cdot i}=1-\frac{1}{n}.
\end{align*}
Third, from time to time, a large proportion of edges will be rewired towards the newly added vertex, drastically reshaping the structure of the tree. 
For example, for any $a\in [0,1)$, 
\[\E{\rd_{\rT_n}(n)\,|\, M\le n^a}\ge \E{\sum_{i=n^a}^n X_i}=(1-a)\ln n.\] 

As for applications, in the context of random networks, the Robin-Hood pruning has an interpretation in terms of `trends'; for example, a new vertex brings in a new idea to the network which may drastically rewire the interests or connections of established individuals in the network. The stamp history $\sigma_n$ gives a ranking between the elements of $\rT_n$ that determines the susceptibility of changing parents in the tree. Preferential attachment models are considered better models for real-world networks. It would be interesting to devise a similar pruning procedure that, acting on preferential attachment trees, preserves their scale-free degree distribution. 

In the context of biology, Kingman's coalescent is usually represented with increasing binary trees, keeping individuals as external nodes and \emph{adding} an internal node for each merge between two lineages. The representation using $n$-chains breaks the symmetry between the pairs of trees merging at each step. Thus, it is not clear how the Robin-Hood pruning process would have a significant interpretation in terms of the genealogical information. 

Regardless of the perspective we use to motivate the process $((\rT_n, \sigma_n),\, n\ge 1)$, there are many interesting theoretical questions that would be worth pursuing. To name just a few:
\begin{enumerate}
\item Understand the process describing how the parent and descendants of a given vertex change with time.
\begin{itemize}
\item Describe how the size of the subtree rooted at a fixed node $j$ evolves. 
\item How does maximum size of such subtree grow?
\end{itemize} 
\item Understand the maximum degree dynamics in both $(R_n,\, n \ge 1)$ and $(\rT_n,\, n \ge 1)$.
\begin{itemize}
\item How often does vertices attaining the maximum degree change? 
\item Are this dynamics the same for both processes?
\end{itemize}
\item Determine whether there is a coupling for which the sequence $(\I{\rd_{\rT_n}(v)},v\in [n])$ is negative related (i.e. that conditions for \eqref{cor:Chen2} are satisfied), or similarly, whether the sequence is negative orthant dependent. 
\end{enumerate}

\section*{Acknowledgements}

I would like to thank Louigi Addario-Berry and Henning Sulzbach for some very helpful discussions, and to the anonymous referees who provided insight on how to improve the presentation of the results and additional references. This research was supported by FQRNT through PBEEE scholarship with number 169888.

\section*{Appendix A: Proof of \refP{prop:asymp-neg}}\label{sec:Appendix}

We use the representation of Kingman's coalescent that consists of a chain $\mathbf{C}=(F_n,\ldots, F_1)$ and write $T^\n$ for the unique tree contained in $F_1$. By \refP{prop:K-DT} we can work with the tree $T^\n$. The proof mimics that of \cite[Proposition 4.2]{AddarioEslava15}, but requires a little more care as we wish to obtain explicit error bounds. 

For each $v,j\in [n]$ let $T_j(v)$ denote the tree in $F_j$ that contains vertex $v$. For each $v\in [n]$, the \emph{selection set} of $v$ is defined as
\[\cS_n(v)=\{2\le j\le n:\, T_j(v)\in \{T_{a_j}^{(j)},T_{b_j}^{(j)}\} \}; \]
this set keeps record of the times when the tree containing $v$ merges.
Finally, for each $2\le j\le n$, we say that $\xi_j$ is \emph{favorable} for vertices in $T_{a_j}^{(j)}$ (resp. vertices in $T_{b_j}^{(j)}$) if $\xi_j=1$ (resp. $\xi_j=0$). 

The key property of Kingman's coalescent is the following. For each $j\in \cS_n(v)$, if $\xi_j$ favors $v$, then $r(T_j(v))$ increases its degree by one in the process; otherwise $r(T_j(v))$ attaches to the root of the other merging tree and the degree of $r(T_j(v))$ remains unchanged for the rest of the process. 
Since all vertices start the process as roots, $\rd_{T^\n}(v)$ is equal to the length of the first streak of favorable times for $v$. Moreover, $(\xi_j,\, j\in [n-1])$ are independent and distributed as $\Ber{1/2}$. Therefore we have the following distributional equivalence.

\begin{fact}\label{deg id}
Let $D$ be a random variable with distribution $\Geo{1/2}$ independent of $\cS_n(v)$, then
\[\rd_{T^\n}(v)\eqdist \min\{D,|\cS_n(v)|\}.\]
\end{fact}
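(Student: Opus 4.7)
The plan is to formalize the argument sketched in the paragraph preceding Fact~\ref{deg id}. Enumerate the selection set in decreasing order as $\cS_n(v)=\{j_1>j_2>\cdots>j_s\}$, where $s=|\cS_n(v)|$, so that $j_1$ is the first time at which the tree containing $v$ merges with another tree. For each $k\in[s]$, let $Y_k$ be the indicator that $\xi_{j_k}$ is favorable for $v$; that is,
\[
Y_k=\I{\xi_{j_k}=1}\cdot \I{T_{j_k}(v)=T^{(j_k)}_{a_{j_k}}}+\I{\xi_{j_k}=0}\cdot \I{T_{j_k}(v)=T^{(j_k)}_{b_{j_k}}}.
\]

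The first and main step is to show that, conditionally on $\cS_n(v)$ together with the pair sequence $((a_j,b_j),\,2\le j\le n)$, the variables $(Y_k,\,1\le k\le s)$ are i.i.d.\ $\Ber{1/2}$. The point is that the selection set $\cS_n(v)$ and the determination of whether $T_{j}(v)$ equals $T_{a_j}^{(j)}$ or $T_{b_j}^{(j)}$ depend only on the pair-choices $((a_i,b_i),\,i\ge 2)$, while the coin flips $(\xi_j,\,j\ge 2)$ are independent of these pair-choices and independent across $j$. Thus each $Y_{k}$ is a Bernoulli$(1/2)$ function of $\xi_{j_k}$ alone (after conditioning), independent of everything else.

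The second step translates the verbal description of the degree process into the announced identity. By the ``key property'' recalled before the statement, vertex $v$ remains a root only while the coin flips at its merge times are favorable; the degree of $v$ rises by one with each favorable flip, and is frozen once an unfavorable flip occurs. Consequently $\rd_{T^\n}(v)$ equals the length of the initial all-favorable streak of $(Y_k)_{k=1}^s$, with the convention that the streak is truncated at $s$ if all $Y_k=1$. To realize this as $\min\{D,|\cS_n(v)|\}$ with $D\sim\Geo{1/2}$ independent of $\cS_n(v)$, I extend $(Y_k)_{k=1}^{s}$ to an infinite i.i.d.\ $\Ber{1/2}$ sequence by appending fresh coin flips drawn independently of the coalescent, and set $D$ equal to the index (minus one, depending on convention) of the first zero in this extended sequence. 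Then $D\sim\Geo{1/2}$ is independent of $\cS_n(v)$, and $\rd_{T^\n}(v)=\min\{D,s\}$ by construction.

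I do not foresee a substantive obstacle; the only delicate point is articulating the conditional independence of the ``favorability'' indicators from $\cS_n(v)$, which reduces to the observation that each $\xi_j$ is independent of the trajectory of the coalescent restricted to the pair-choice variables. Once that is in place, the identity is immediate from the geometric-trial interpretation of the favorable streak.
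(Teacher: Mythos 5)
Your proposal is correct and follows the same reasoning the paper relies on: the paper states Fact~\ref{deg id} without a separate proof, treating it as immediate from the preceding paragraph (the partition process, hence $\cS_n(v)$ and the role of $v$'s tree at each merge, is a function of the pair-choices alone, the $\xi_j$ are independent fair coins, and $\rd_{T^\n}(v)$ is the initial favorable streak, truncated at $|\cS_n(v)|$). Your write-up simply makes explicit the conditional i.i.d.\ $\Ber{1/2}$ structure of the favorability indicators and the extension by fresh coins that realizes $D$, which is exactly the intended argument.
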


This fact, together with the next lemma, allow us to get estimates for the tails of $\rd_{T^\n}(v)$. 

\begin{lem}\label{S id}
If $c\in (0,2)$ and $0<\eps\le 1-c/2$. Writing $a=1-\eps-c/2$, we have 
\[\p{|\cS_n(v)\setminus [n^a]|>c\ln n}\le O(1)n^{-\eps^2/(\eps+c/2)}.\]
\end{lem}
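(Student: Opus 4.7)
My plan is to reduce the lemma to a standard concentration bound for a sum of independent Bernoulli variables, exploiting the fact that at each level of Kingman's coalescent the tree containing $v$ is selected with a fixed probability, independently of everything that came before.

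First, let $B_j := \I{j \in \cS_n(v)}$ for $j = 2, \ldots, n$. I claim that $B_2, \ldots, B_n$ are mutually independent with $B_j \sim \Ber{2/j}$. Indeed, per \refD{dfn:Kingman}, at step $j$ the pair $\{a_j, b_j\}$ is chosen uniformly among the $\binom{j}{2}$ unordered pairs of trees of $F_j$, independently of all earlier choices. Conditional on $F_j$, the tree containing $v$ is in the selected pair with probability $(j-1)/\binom{j}{2} = 2/j$, and this probability does not depend on the identity of $T_j(v)$. The tower property then delivers $\p{B_j = 1 \mid \mathcal{F}_{>j}} = 2/j$, hence mutual independence.

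Second, writing $S := |\cS_n(v)\setminus [n^a]| = \sum_{j=\lfloor n^a\rfloor+1}^n B_j$ and using $H_n = \ln n + O(1)$, the mean is
\[
\mu := \E{S} = 2\bigl(H_n - H_{\lfloor n^a\rfloor}\bigr) = 2(1-a)\ln n + O(1) = (c+2\eps)\ln n + O(1).
\]
I would then invoke the standard Chernoff bound for sums of independent Bernoulli variables. With the choice $\delta = 2\eps/(c+2\eps) \in (0,1)$ one has $\mu(1-\delta) = c\ln n + O(1)$, and the lower-tail bound $\p{S \le \mu(1-\delta)} \le \exp(-\mu\delta^2/2)$ yields
\[
\frac{\mu\delta^2}{2} = \frac{(c+2\eps)\ln n \cdot 4\eps^2/(c+2\eps)^2}{2} = \frac{2\eps^2}{c+2\eps}\ln n = \frac{\eps^2}{\eps+c/2}\ln n,
\]
which is exactly the claimed rate $O(1)\cdot n^{-\eps^2/(\eps+c/2)}$.

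The sharpest aspect of the argument is matching the inequality direction in the displayed statement. The Chernoff calculation above in fact bounds the lower-tail event $\{S \le c\ln n\}$, not the upper-tail event $\{S > c\ln n\}$: since $\mu - c\ln n = 2\eps\ln n > 0$, the event $\{S > c\ln n\}$ is typical, with probability tending to $1$, while only its complement concentrates at the stated rate. Taking the display at face value thus forces the trivial bound $1$, whereas the value $n^{-\eps^2/(\eps+c/2)}$ is pinned down by the lower-tail computation above. So Steps~1--3 give the full proof provided one reads the inequality in the needed direction; and indeed, what \refP{prop:asymp-neg} requires downstream is precisely the high-probability guarantee that the first $c\ln n$ selections of $v$ all occur strictly after coalescent-level $\lfloor n^a\rfloor$, which is the complementary, lower-tail form of the displayed bound.
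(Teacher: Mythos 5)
Your proof is correct and follows essentially the same route as the paper: decompose $|\cS_n(v)\setminus[n^a]|$ as a sum of independent $\Ber{2/j}$ variables, compute the mean $(c+2\eps)\ln n+O(1)$, and apply a lower-tail Chernoff/Bernstein bound with deviation $2\eps\ln n+O(1)$, which gives exactly the exponent $\eps^2/(\eps+c/2)$. Your reading of the inequality direction is also the intended one: the displayed ``$>$'' is a typo, since the paper's own proof bounds $\p{S\le \E{S}-t}$ and the lemma is invoked downstream (in \refL{lower m} and \refP{upper m}) for the lower-tail event $\{|\cS_n(v)\setminus[n^a]|<c\ln n\}$.
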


\begin{proof}
First, there are $j(j-1)$ distinct pair of trees in $F_j$, exactly $j-1$ of such pairs contains $T_j(v)$; thus $\p{j\in \cS_n(v)}=2/j$. Since the merging trees are chosen independently at each time, we have that for any $a\in [0,1)$ we have
\begin{align*}
|\cS_n(v)\setminus [n^a]|\eqdist \sum_{j=n^a+1}^n B_j, 
\end{align*}
where the variables $B_1,\ldots B_n$ are independent Bernoulli variables with $\E{B_i}=2/i$, respectively. 
The desired bound is then a straightforward application of Bernstein's inequalities (see, e.g. \cite{JLR}, Theorem~2.8 and (2.6)). For a sum $S$ of $\{0,1\}$-valued variables, we have $\p{S\le \E{S}-t} \le \exp\{-t^2/2\E{S}\}$. 
In this case, $S=\sum_{i=n^a}^n B_i$ and 
\[\E{S}=\sum_{i=n^a}^n \frac{2}{i}=2(1-a)\ln n+O(1)=(c+2\eps)\ln n+O(1).\] 
The result follows by setting $t=2\eps\ln n+O(1)$. 
\end{proof}

\begin{prop}\label{upper m}
If $c\in (0,2)$ and $m<c\ln n$, then for $\eps=(2-c)^2/4$,
\[2^{-m}(1-o(n^{-\eps}))\le \p{\rd_{T^\n}(1)\ge m}\le 2^{-m}.\]
\end{prop}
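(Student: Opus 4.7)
The plan is to exploit Fact~\ref{deg id} directly, which gives $\rd_{T^\n}(1) \eqdist \min\{D, |\cS_n(1)|\}$ with $D \sim \Geo{1/2}$ independent of $\cS_n(1)$. By independence of $D$ and $\cS_n(1)$ and the fact that both are non-negative integer valued,
\[
\p{\rd_{T^\n}(1) \ge m} \;=\; \p{D \ge m}\,\p{|\cS_n(1)| \ge m} \;=\; 2^{-m}\,\p{|\cS_n(1)| \ge m}.
\]
The upper bound follows instantly since $\p{|\cS_n(1)| \ge m} \le 1$. For the lower bound it therefore suffices to show that $\p{|\cS_n(1)| < m} = o(n^{-\eps})$ with $\eps = (2-c)^2/4$.

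For this estimate I would invoke Lemma~\ref{S id} (whose proof controls the \emph{lower} tail of $|\cS_n(v) \setminus [n^a]|$, regardless of the direction of the inequality in its statement). Since $m < c \ln n$ and $|\cS_n(1)| \ge |\cS_n(1) \setminus [n^a]|$ for any $a \ge 0$, monotonicity gives
\[
\p{|\cS_n(1)| < m} \;\le\; \p{|\cS_n(1) \setminus [n^a]| < c \ln n}.
\]
Applying Lemma~\ref{S id} with parameter $\eps'$, subject to $0 < \eps' \le 1 - c/2$, yields a bound of order $n^{-\eps'^2/(\eps' + c/2)}$. A routine calculation shows that $\eps' \mapsto \eps'^2/(\eps' + c/2)$ is increasing on its domain, so the optimal choice is the endpoint $\eps' = (2-c)/2$, which (i) corresponds to the parameter $a = 1 - \eps' - c/2 = 0$, making $|\cS_n(1) \setminus [n^a]| = |\cS_n(1)|$ since $\cS_n(1) \subset \{2, \ldots, n\}$, and (ii) produces exactly the exponent $(2-c)^2/4 = \eps$.

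The main obstacle is upgrading $O(n^{-\eps})$ to $o(n^{-\eps})$ at the maximizing value of $\eps'$. The plan is to use the strict inequality $m < c \ln n$: since Lemma~\ref{S id} effectively bounds the probability that the sum $S$ of independent $\Ber{2/i}$ variables is at most $\E{S} - t$, one can use $t = 2\eps' \ln n + (c \ln n - m) + O(1)$ instead of $2\eps' \ln n + O(1)$, which either (a) absorbs a diverging additive slack into the exponent, or (b) allows one to run the argument with $\eps'_n = (2-c)/2$ together with a $t$ that is strictly larger than the Bernstein-tight value, yielding $o(n^{-\eps})$. Alternatively, one can simply apply the lemma with $\eps'_n \nearrow (2-c)/2$ slowly and check that the induced prefactor in $O(1)$ still decays to zero relative to $n^{-\eps}$. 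Either route is routine once the reduction above is in place.
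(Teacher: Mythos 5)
Your proposal is correct and takes essentially the same route as the paper: the factorization $\p{\rd_{T^\n}(1)\ge m}=2^{-m}\,\p{|\cS_n(1)|\ge m}$ from \refL{deg id}, the trivial upper bound, and \refL{S id} applied at the endpoint $\eps'=1-c/2$ (so $a=0$ and $\cS_n(1)\setminus[n^a]=\cS_n(1)$), which yields exactly the exponent $(2-c)^2/4$. Your extra concern about upgrading $O(n^{-\eps})$ to $o(n^{-\eps})$ points at an imprecision in the paper itself rather than a gap in your argument: the paper's own proof also only cites \refL{S id}, which gives a big-$O$ bound at this endpoint, so your core argument is as complete as the published one.
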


\begin{proof}[Proof of \refP{upper m}]
It follows from \refL{deg id} that 
\[\p{\rd_{T^\n}(v)\ge m} =\p{D\ge m} \p{|\cS_n(v)|\ge m}.\]
The upper bound on $\p{\rd_{T^\n}(1)\ge m}$ is then trivial, while the lower bound follows by \refL{S id} using $\eps=1-c/2$ and that $\cS_n(v)=\cS_n(v)\setminus [1]$. 
\end{proof}

Now, consider two distinct vertices $v,w\in [n]$. For $m\in \N$, let $\cG_m\in \{2,\ldots, n\}^2$ contain all pairs of selection sets that enable vertices $v$ and $w$ to have degree at least $m$; that is, $(A,B)\in \cG_m$ only if  
\[\p{\rd_{T^\n}(v)\ge m,\, \rd_{T^\n}(w)\ge m, (\cS_n(v),\cS_n(w))=(A,B)}>0.\]
Since the $\xi_j$ are independent of the selection times, we have that 
\begin{align}\label{G}
\p{\rd_{T^\n}(v)\ge m,\, \rd_{T^\n}(w)\ge m}\ge 2^{-2m}\p{(\cS_n(v),\cS_n(w))\in \cG_m}.
\end{align}

To estimate $\p{(\cS_n(v),\cS_n(w))\in \cG_m}$ we need more details on the dynamics of the model. We start with a simple tail bound for the following random variable; let 
\[\tau=\max\{j:\, j\in \cS_n(v)\cap \cS_n(w)\}.\]

\begin{lem}\label{tau}
For $a\in (0,1)$, $\p{\tau>n^a}\le 4n^{-a}$.
\end{lem}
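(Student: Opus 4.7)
The plan is to identify the random variable $\tau$ with the coalescence time of the trees containing $v$ and $w$, and then to bound its tail by a union bound over the merging steps.

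First, I would argue that $\tau$ equals the step $j^* := \max\{2 \le j \le n : T_j(v) \neq T_j(w)\}$, i.e., the step at which the trees of $v$ and $w$ merge in the coalescent. By monotonicity of the partitions under coalescence, $T_j(v) \neq T_j(w)$ for all $j \ge j^*$ and $T_j(v) = T_j(w)$ for all $j < j^*$. For $j < j^*$, membership in $\cS_n(v) \cap \cS_n(w)$ only requires the common tree $T_j(v) = T_j(w)$ to be among the two selected at step $j$, which may well happen but yields contributions strictly smaller than $j^*$. For $j > j^*$, the distinct trees $T_j(v), T_j(w)$ would both need to be the merging pair at step $j$, forcing their coalescence at step $j$ and contradicting $j > j^*$. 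Finally $j^* \in \cS_n(v) \cap \cS_n(w)$ by definition, so $\tau = j^*$.

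Given this identification, $\{\tau > n^a\} = \{j^* > n^a\}$ is the event that the trees of $v$ and $w$ fail to coalesce in any of the steps from $F_n$ down to $F_{\lceil n^a \rceil + 1}$. At step $i$ (the transition $F_i \to F_{i-1}$), the pair $\{a_i, b_i\}$ is uniform over the $\binom{i}{2}$ pairs of trees in $F_i$ and independent across steps, so the unconditional probability that $v$ and $w$'s trees coalesce at step $i$ is at most $1/\binom{i}{2} = 2/(i(i-1))$. A union bound together with the telescoping $2/(i(i-1)) = 2/(i-1) - 2/i$ yields
\[
\p{\tau > n^a} \le \sum_{i = \lceil n^a \rceil + 1}^n \frac{2}{i(i-1)} = 2\left(\frac{1}{\lceil n^a \rceil} - \frac{1}{n}\right) \le \frac{2}{n^a} \le \frac{4}{n^a}.
\]

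The only mildly subtle point is the combinatorial identification $\tau = j^*$, since the intersection $\cS_n(v) \cap \cS_n(w)$ a priori mixes contributions from times at which the two lineages have already coalesced (and their common tree is picked for a further merge) with times at which they have not yet coalesced. As noted above, the former contributions are automatically below $j^*$ by monotonicity of the partition, so the identification is clean; the remaining tail estimate is entirely routine.
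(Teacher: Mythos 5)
Your argument is correct and reaches the stated bound, but it takes a genuinely different route from the paper's. The paper reduces to $v=1$, $w=2$ by exchangeability and computes $\p{\tau\le k}$ exactly as the product $\prod_{j=k+1}^n\bigl(1-\tfrac{2}{j(j-1)}\bigr)$ (by conditioning step by step), then lower-bounds this product by $1-4/k$ via $1-x\ge e^{-2x}$-type estimates and an integral comparison. You instead identify $\tau$ with the step at which the lineages of $v$ and $w$ merge and apply a first-moment/union bound: since $\{a_i,b_i\}$ is uniform over the $\binom{i}{2}$ pairs of trees of $F_i$ and independent of the past, the probability of merging at step $i$ is at most $2/(i(i-1))$, and the telescoping sum over $i>n^a$ gives roughly $2/n^a\le 4/n^a$. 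Your route avoids the induction and the product estimates (and the reduction to $v=1,w=2$, which is anyway harmless), and even yields a slightly better constant; the paper's computation buys an exact formula for $\p{\tau\le k}$, which is more information than the lemma needs. Both proofs rest on the same key fact: the uniformity of the merging pair at each step and its independence from the history.

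Two slips in your write-up should be fixed, though neither affects the substance. First, the displayed definition $j^*=\max\{2\le j\le n:\, T_j(v)\neq T_j(w)\}$ is wrong as written: since $T_n(v)\neq T_n(w)$, this maximum is always $n$; you want $j^*=\min\{j:\, T_j(v)\neq T_j(w)\}$, the merge step, which is what your verbal description and the subsequent three-case argument actually use. Second, the sentence describing $\{\tau>n^a\}$ as the event that the lineages ``fail to coalesce in any of the steps from $F_n$ down to $F_{\lceil n^a\rceil+1}$'' is backwards: $\{\tau>n^a\}$ is the event that they \emph{do} coalesce at one of those early steps, and that is exactly the event your union bound treats, so the computation addresses the right event despite the misstatement. (Also, to cover $n^a\notin\Z$, start the sum at $\lfloor n^a\rfloor+1$ rather than $\lceil n^a\rceil+1$; the slack between the constants $2$ and $4$ absorbs this.)
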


\begin{proof}
Vertices in $T^\n$ are exchangeable, so we can take $v=1,w=2$; these vertices belong to distinct trees in $F_j$ for all $j\ge \tau$.  Additionally, by the ordering convention of trees in $F_j$, it follows that $T_j(1)=1$ and $T_j(2)=2$ for all $j\ge \tau$. 

We claim that for all $2<k\le n$, 
\[\p{\tau \le k}=\prod_{j=k+1}^n \left(1-\frac{2}{j(j-1)}\right).\]
This follows by induction on $n-k$. Clearly, $\tau=n$ only if $\{a_n,b_n\}=\{1,2\}$ which occurs with probability $\frac{2}{n(n-1)}$, thus $\p{\tau\le n-1}$ satisfies the equation above. For $k<n$, we have 
\begin{align*}
\frac{\p{\tau\le k}}{\p{\tau \le k+1}}=\p{\tau\le k |\tau\le k+1}
&=\p{\{a_{k+1},b_{k+1}\}\neq \{1,2\}}=1-\frac{2}{(k+1)k}.
\end{align*}
Next, for $k$ larger enough, 
\begin{align*}
\prod_{j=k+1}^n \left(1-\frac{2}{j(j-1)}\right)\ge \prod_{j=k}^{n-1} \left(1-\frac{2}{j^2}\right)>1-\sum_{j=k}^\infty \frac{2}{j^2} >1 -4 \int_{k}^{\infty} x^{-2}dx= 1-4/k.
\end{align*}
The second inequality uses that $1-x>e^{-2x}$ for $x>0$ sufficiently small, followed by the fact that $e^{-\sum 2x_j}>1-\sum 2x_j$. The result follows with $k=n^a$. 
\end{proof}

\begin{lem}\label{lower m}
If $c\in (0,2)$ and $m<c\ln n$, then for any $\gamma<\frac{1}{4}(1-c+\sqrt{1+2c-c^2})$,
\[\p{(\cS_n(v),\cS_n(w))\in \cG_{m}}\ge 1- o(n^{-\gamma}).\]
\end{lem}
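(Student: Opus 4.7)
My plan is to identify the complement of $\cG_m$ with an event about the merging time $\tau=\max(\cS_n(v)\cap\cS_n(w))$ and the top portion of $\cS_n(v)$, then to bound it by combining \refL{tau} with the Bernstein lower-tail ingredient that underpins \refL{S id}. First I would argue that only the single coin $\xi_\tau$ couples the fates of $v$ and $w$: for $j<\tau$ the vertices lie in a common tree of $F_j$, so $j\in \cS_n(v)\Leftrightarrow j\in \cS_n(w)$ and $\xi_j$ is favorable for $v$ precisely when favorable for $w$; for $j>\tau$ the sets $\cS_n(v)$ and $\cS_n(w)$ are disjoint; only at $j=\tau$ does $\xi_\tau$ favor exactly one of the pair. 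Hence $(A,B)\in\cG_m$ iff $\tau$ fails to lie in the top $m$ elements of both $A$ and $B$, and in particular $\cG_m\supseteq\{|\cS_n(v)\cap(\tau,n]|\ge m\}$. This yields the reduction
\[\p{(\cS_n(v),\cS_n(w))\in\cG_m^c}\le \p{|\cS_n(v)\cap(\tau,n]|<m}.\]

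For any parameter $a\in(0,1-c/2)$, the implication $\tau\le n^a\Rightarrow |\cS_n(v)\cap(\tau,n]|\ge |\cS_n(v)\cap(n^a,n]|$ lets me split
\[\p{|\cS_n(v)\cap(\tau,n]|<m}\le \p{\tau>n^a}+\p{|\cS_n(v)\cap(n^a,n]|<m}.\]
\refL{tau} controls the first term by $4n^{-a}$. The second term involves a sum of independent $\Ber{2/j}$ variables for $j=\lceil n^a\rceil+1,\ldots,n$, with mean $2(1-a)\ln n+O(1)$ strictly larger than $m$ since $a<1-c/2$; applying the lower-tail Bernstein inequality $\p{S\le \E{S}-t}\le \exp(-t^2/(2\E{S}))$ exactly as in the proof of \refL{S id} yields a bound of order $n^{-\eps^2/(\eps+c/2)}$ with $\eps=1-a-c/2$.

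The last step is to optimize $a$, equivalently $\eps$, so as to maximize the combined exponent $\min\{1-\eps-c/2,\,\eps^2/(\eps+c/2)\}$. Equating the two arguments gives $(1-\eps-c/2)(\eps+c/2)=\eps^2$, equivalently $2\eps^2+(c-1)\eps+c^2/4-c/2=0$, whose positive root is precisely $\tfrac14(1-c+\sqrt{1+2c-c^2})$, the cap appearing in the statement; at this choice the two contributions balance and deliver $o(n^{-\gamma})$ for every $\gamma$ strictly smaller than this value. The only bookkeeping to do carefully is to carry the $O(1)$ corrections to the mean of the Bernoulli sum through Bernstein so that they do not perturb the leading exponent; this is routine. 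I expect the characterization of $\cG_m$ through $\tau$ in the first step---in particular the observation that the symmetry $T_j(v)=T_j(w)$ for $j<\tau$ collapses all conflicts to the single index $\tau$---to be the main conceptual hurdle, while the concentration and optimization are standard.
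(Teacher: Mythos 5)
Your overall route is the same as the paper's: reduce the complement of $\cG_m$ to an event about $\tau$ and the upper portions of the selection sets, then combine \refL{tau} with the Bernstein lower-tail estimate behind \refL{S id}, and optimize $\eps$ via $1-\eps-c/2=\eps^2/(\eps+c/2)$. However, your reduction step has a genuine flaw. The claimed characterization of $\cG_m$ through $\tau$ alone, and in particular the inclusion $\cG_m\supseteq\{|\cS_n(v)\cap(\tau,n]|\ge m\}$, is false: membership in $\cG_m$ also requires $|\cS_n(w)|\ge m$, since $\rd_{T^\n}(w)\le|\cS_n(w)|$ deterministically. It can perfectly well happen that $v$ accumulates $m$ selection times above $\tau$ while $w$'s tree is selected only a handful of times in the entire process (for instance when $\tau$ is small), in which case $(\cS_n(v),\cS_n(w))\notin\cG_m$. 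Consequently your one-sided bound $\p{(\cS_n(v),\cS_n(w))\notin\cG_m}\le\p{|\cS_n(v)\cap(\tau,n]|<m}$ is unjustified as a set inclusion.

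The repair is the symmetric statement, which is what the paper's proof effectively uses: if both $|\cS_n(v)\cap(\tau,n]|\ge m$ and $|\cS_n(w)\cap(\tau,n]|\ge m$, then, since the two selection sets are disjoint above $\tau$ and the coins $\xi_j$ are independent of the selection times, one can realize favorable streaks of length $m$ for both vertices strictly above $\tau$; hence the pair lies in $\cG_m$. This gives $\{(\cS_n(v),\cS_n(w))\notin\cG_m\}\subset\{|\cS_n(v)\cap(\tau,n]|<m\}\cup\{|\cS_n(w)\cap(\tau,n]|<m\}$, and a union bound together with exchangeability of $v$ and $w$ yields your estimate with an extra factor $2$, which is harmless for the exponent; from there your splitting at $\tau\le n^a$, the use of \refL{tau}, the Bernstein step and the balancing of $\eps$ coincide with the paper. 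One caution on the final step (which you share with the paper's write-up): the positive root $\eps^*=\frac{1}{4}(1-c+\sqrt{1+2c-c^2})$ of the balancing equation is the optimal \emph{choice of} $\eps$, whereas the exponent actually obtained at the balance is the common value $1-\eps^*-c/2=\frac{1}{4}\bigl(3-c-\sqrt{1+2c-c^2}\bigr)$; you should double-check which of these two quantities you are entitled to quote as the admissible range for $\gamma$, rather than asserting that $\gamma$ may be taken up to $\eps^*$ itself.
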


\begin{proof}
For each $\eps\in (0,1-c/2]$ write $a=a(\eps)=1-\eps-c/2$, then 
\begin{align}\label{Ga}
\p{(\cS_n(v),\cS_n(w))\notin \cG_{m}}&\le \p{\tau>n^a} +2\p{|\cS_n(v)\setminus [n^a]|<c\ln n}.
\end{align}
Before, establishing \eqref{Ga}, we note that the terms in the right-hand side of \eqref{Ga} are bounded by Lemmas \refand{tau}{S id}, respectively. Since such bounds depend on the choice of $\eps$, we can use 
\[\gamma<\max_{0<\eps \le 1-c/2}
\left\{
\min\left(1-\eps-\frac{c}{2},\frac{\eps^2}{\eps+\frac{c}{2}}\right)
\right\}
=\frac{1}{4}\left(1-c+\sqrt{1+2c-c^2}\right).\]

The last equality since the functions to be minimized are decreasing and increasing, respectively, on the $(0,1)$ interval. It then follows that the maximum is attained when $0<\eps<1-c/2$ satisfies 
$1-\eps-c/2=\eps^2/(\eps+\frac{c}{2})$.

We now proceed to establish equation \eqref{Ga}. At step $\tau$, exactly one of $v$ and $w$ is favored by $\xi_\tau$. Thus, at least one of $v$ or $w$ gets its degree fixed for the remainder of the process. Therefore,
\begin{align*}
\left\{(\cS_n(v),\cS_n(w))\in \cG_m \right\}
\subset 
\left\{ |\cS_n(v)\setminus [\tau]|\ge m \right\}
\cup 
\left\{ |\cS_n(w)\setminus [\tau]|\ge m \right\}.
\end{align*}
By intersecting with the event $\tau> n^a$, and the exchangeability of vertices in $T^\n$ we get, 
\begin{align*}
\p{(\cS_n(v),\cS_n(w))\notin \cG_{m}}
&\le \p{\tau> n^a} + 2\p{(\cS_n(v),\cS_n(w))\notin \cG_{m}, \tau\le n^a}\\
&\le \p{\tau> n^a} + 2\p{|\cS_n(v)\setminus [\tau]|< m, \tau\le n^a}\\
&\le \p{\tau> n^a} + 2\p{|\cS_n(v)\setminus [n^a]|< m, \tau\le n^a};
\end{align*} 
from which \eqref{Ga} follows. 
\end{proof}

\begin{proof}[Proof of \refP{prop:asymp-neg}]
Fix $c\in (0,2)$, $m=m(n)<c\ln n$ and let $I_v,J_{v}$ be defined as in \refP{prop:asymp-neg}. By \refP{prop:K-DT}, if follows that $\E{I_v}=\p{\rd_{T^\n}(v)\ge m}$ and 
\begin{align*}
\E{I_v}\E{J_{v}}=\E{I_vI_n}
&=\p{\rd_{T^\n}(v)\ge m,\, \rd_{T^\n}(n)\ge m}\\
&=2^{-2m} \p{(\cS_n(v),\cS_n(n))\in \cG_{m}};
\end{align*}
the last equality by \eqref{G}. Lemmas \refand{lower m}{upper m} then gives that for $\alpha <\frac{1}{4}(1-c+\sqrt{1+2c-c^2})$,
\begin{align*}
\E{I_v}\E{I_n}-\E{I_v}\E{J_{vn}}\le 2^{-2m}-2^{-2m}(1+o(n^{-\alpha}))=2^{-2m}o(n^{-\alpha}).
\end{align*}

\vspace{-.5cm}
\end{proof}


\def\cprime{$'$}

\Addresses

\end{document}